\numberwithin{table}{section}
\theoremstyle{thmstyleone}
\newtheorem{lemma}{Lemma}[section]
\newtheorem{theorem}[lemma]{Theorem}
\newtheorem{definition}[lemma]{Definition}
\newtheorem{proposition}[lemma]{Proposition}
\newtheorem{remark}[lemma]{Remark}
\renewcommand{\d}{\operatorname{d}}
\newcommand{\Z}{\mathbb{Z}}
\newcommand{\del}{\partial}
\newcommand{\N}{\mathbb{N}}
\newcommand{\R}{\mathbb{R}}
\newcommand{\C}{\mathbb{C}}
\newcommand{\im}{\operatorname{im}}
\newcommand{\Vol}{\operatorname{Vol}}
\renewcommand{\O}{\mathcal{O}}
\renewcommand{\H}{\mathbb{H}}
\newcommand{\D}{\centernot{D}}
\newcommand{\Tf}{\operatorname{Tf}}
\newcommand{\Mat}{\operatorname{Mat}}
\newcommand{\Sym}{\operatorname{Sym}}
\newcommand{\hk}{hyperk\"ahler\xspace}
\newcommand{\mycite}[1]{\cite{#1}}
\begin{document}

\title[Article Title]{Construction of gravitational instantons with non-maximal volume growth via codimension-1 collapse}

\author*[1]{\fnm{Willem Adriaan} \sur{Salm}}\email{willem.adriaan.salm@ulb.be}

\affil*[1]{\orgdiv{Department of Mathematics}, \orgname{Université Libre de Bruxelles}, \orgaddress{\street{Boulevard du Triomphe}, \city{Brussels}, \postcode{1050}, \country{Belgium}}}

\abstract{In this paper, we construct families of gravitational instantons of type ALG, ALG*, ALH and ALH* using a gluing construction. Away from a finite set of exceptional points, the metric collapses with bounded curvature to a quotient of $\R^3$ by $\Z_2$ and a lattice of rank one or two. Depending on whether the gravitational instantons are of type ALG/ALG* or ALH/ALH*, there are either two or four exceptional points respectively that are modelled on the Atiyah-Hitchin manifold. The other exceptional points are modelled on the Taub-NUT metric. There are at most four, respectively eight, of these points in each case.}

\keywords{53C25, 53C26, 53C55}

\maketitle
{
\section{Introduction}
Gravitational instantons are complete, non-compact \hk manifolds of dimension four with $L^2$ bounded curvature. In the late 70's, \mycite{Eguchi1978} gave the first non-trivial examples of these spaces. 
Over the years, many other constructions of gravitational instantons were found: \mycite{Kronheimer1989a} constructed gravitational instantons using \hk quotients. \mycite{Atiyah1988} constructed a gravitational instanton using gauge theory. 
\mycite{Ivanov1996}, \mycite{Lindstroem1988}, \mycite{Cherkis1999} and \mycite{Cherkis2005} constructed gravitational instantons using twistor methods. 
\cite{Biquard2011} constructed examples using a gluing construction.
\cite{Tian1990} and \cite{Hein2010b,Hein2010a} created examples by solving the complex Monge–Ampère equation.
Finally, \mycite{Sun2021} classified all gravitational instantons.

According to \mycite{Chen2019}, there are four classes of gravitational instantons with $r^{-2 - \epsilon}$ curvature decay. These classes are called ALE (Asymptotically Locally Euclidean), ALF (Asymptotically Locally Flat), ALG and ALH and they are distinguished by their \textit{volume growth}. That is, in a gravitational instanton of type ALE, ALF, ALG or ALH, a ball of radius $r\gg 1$ will have volume of order $r^4$, $r^3$, $r^2$ or $r$ respectively. 
If the curvature of a gravitational instanton does not decay with $\O(r^{-2 - \epsilon})$ rate, it can only be in one of two classes: ALG* or ALH*. In this case, the volume growth will be of order $r^2$ or $r^{4/3}$ respectively.

Although there are now many different constructions, each construction is tailored to a specific class of gravitational instantons. In this paper we show that the gluing construction carried out by \mycite{Schroers2020} for ALF gravitational instantons, can be extended to gravitational instantons of type ALG, ALG*, ALH and ALH*. Moreover, we show that this can be done in an explicit, systematic and uniform way, even if, from a geometric and analytic viewpoint, these gravitational instantons behave quite differently.

This construction will also be useful in understanding the boundary of the moduli space of gravitational instantons. As this moduli space is not compact, it is possible that families of gravitational instantons degenerate. Like in the work of \mycite{Foscolo2016}, we will focus on the limit where a family of gravitational instantons collapse to a flat 3-dimensional space. In order to describe this process explicitly, we will construct a 4-manifold which has the structure of a  circle bundle almost everywhere. We will equip it with a metric that is approximately \hk and we will show that when the circle fibres are sufficiently small, this metric can be perturbed into a genuine \hk metric.

\subsection{Results}
\label{sec:results}
The main result of this paper is as follows:
\begin{theorem}
	\label{main-theorem}
	Let $L \subset \R^3$ be a lattice of rank one or two and consider the $\Z_2$ action on $\R^3 / L$ that is induced by the antipodal map on $\R^3$. Let $\{p_i\}$ be a configuration of $n$ distinct points in $(\R^3/L - \operatorname{Fix}(\Z_2))/ \Z_2$. Suppose that $n \le 4$ when $\R^3/L \simeq \R^2 \times S^1$ and $n \le 8$ when $\R^3/L \simeq \R \times T^2$. Then, there exists an $\epsilon_0 > 0$, such that for all $0<\epsilon< \epsilon_0$ there exists a gravitational instanton $(M_{\R^3/L, n}, g_\epsilon)$ with the following properties:
	\begin{enumerate}
		\item For each fixed point of the $\Z_2$ action on $\R^3 / L$, there is a compact set $K \subset M_{\R^3 / L, n}$, such that $\epsilon^{-2} g_\epsilon$ approximates the Atiyah-Hitchin metric on $K$ as $\epsilon \to 0$.
		\item For each $i \in \{1, \ldots, n\}$, there is a compact set $K_i \subset M_{\R^3 / L, n}$ such that $\epsilon^{-2}g_\epsilon$ approximates the Taub-NUT metric on $K_i$ as $\epsilon \to 0$.
		\item Away from the singularities, the manifold collapses to $(\R^3 / L)/\Z_2$ with bounded curvature as $\epsilon$ converges to zero.
		% \item Depending on the lattice and $n$, the asymptotic metric can be classified as
		% \begin{itemize}
		% 	\item ALG*-$I^*_{4-n}$ when $\dim L = 1$ and $n < 4$,
		% 	\item ALG${}_{\frac{1}{2}}$ when $\dim L = 1$ and $n = 4$,
		% 	\item ALH*${}_{8-n}$ when $\dim L = 2$ and $n < 8$,
		% 	\item ALH when $\dim L = 2$ and $n = 8$.
		% \end{itemize}
	\end{enumerate} 
\end{theorem}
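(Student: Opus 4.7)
The plan is to adapt the gluing strategy of \cite{Schroers2020} and \cite{Foscolo2016} for ALF gravitational instantons to the present cylindrical base geometries. The construction proceeds in three phases: building an approximate \hk 4-manifold, estimating the deviation from being \hk, and perturbing to an exact solution by an implicit function argument.

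For the approximate model, I would cover the quotient base $(\R^3/L)/\Z_2$ by a bulk region $U$ away from the singular set together with small neighbourhoods of each $p_i$ and of each fixed point of $\Z_2$. Over (the preimage of) $U$, take a principal $S^1$-bundle equipped with a Gibbons-Hawking metric $V(dx_1^2+dx_2^2+dx_3^2)+V^{-1}\alpha^2$, where $V=\epsilon^{-1}+h$ and $\ast dh = d\alpha$, with $h$ harmonic on $U$ having a positive monopole singularity at each $p_i$ and a singularity matching the Atiyah-Hitchin asymptotic model at each $\Z_2$-fixed point. The existence of such a harmonic function reduces to a Gauss-law balance on the compact factor ($S^1$ or $T^2$), and this is the origin of the numerical bounds $n\le 4$ and $n\le 8$. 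Near each $p_i$ one replaces the Gibbons-Hawking region with an $\epsilon$-rescaled Taub-NUT metric, and near each fixed point with an $\epsilon$-rescaled Atiyah-Hitchin metric; the fact that Atiyah-Hitchin differs from its asymptotic Gibbons-Hawking double-cover model by exponentially decaying terms is what allows the glued metric to be close to both local models in the overlap regions.

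The glued \hk structure is encoded by a triple of self-dual 2-forms $(\omega_1,\omega_2,\omega_3)$ satisfying $d\omega_i = O(\epsilon^\alpha)$ and $\omega_i\wedge\omega_j-\delta_{ij}\Omega = O(\epsilon^\alpha)$ for some $\alpha>0$, measured in weighted H\"older norms adapted to the collapsing geometry. The last step perturbs $(\omega_i)$ to a genuine \hk triple by solving a nonlinear PDE whose linearisation is, modulo lower-order terms, the Hodge Laplacian acting on self-dual 2-forms. A contraction mapping argument on a small ball in the weighted space then yields the \hk metric $g_\epsilon$, and properties (1)-(3) read off directly from the local models used in the construction.

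The technical heart of the argument, and the main obstacle, is obtaining Schauder-type estimates for the linearised operator with constants independent of $\epsilon$. Because the manifold collapses in codimension one to $(\R^3/L)/\Z_2$, the spectrum of the Laplacian degenerates: fibrewise non-constant modes acquire eigenvalues of order $\epsilon^{-2}$, whereas $S^1$-invariant modes essentially see the Laplacian on the base. The weighted norms must separate these two regimes and, at the same time, control low-lying modes coming from the kernel of the base Laplacian and from harmonic forms at infinity. This is also the step where the four asymptotic types ALG, ALG*, ALH and ALH* require genuinely different analytic inputs, dictated by the geometry of the base at infinity, even though the overall scheme is uniform.
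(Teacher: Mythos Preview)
Your proposal is correct and follows essentially the same route as the paper. Two refinements worth noting: the paper interpolates the K\"ahler forms (not the metrics) so that the approximate triple $\omega_i$ is \emph{exactly} closed and only the orthonormality $\omega_i\wedge\omega_j=\delta_{ij}\mu$ fails by $O(\epsilon^{7/5})$, and it then reduces the Hodge Laplacian on $\Omega^+$ to the scalar Laplacian via the Weitzenb\"ock formula (trivialising $\zeta_i=u_{ij}\omega_j$) before proving the $\epsilon$-uniform invertibility by a blow-up/contradiction argument.
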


We show this theorem in two steps. In Section \ref{sec:approximate-solution}, we will construct a 1-parameter family of Riemannian 4-manifolds using the data specified in Theorem \ref{main-theorem}. 
We construct this family such that conditions 1 to 3 are satisfied. We show that it is \hk outside some small annular regions and for each annular region, we give an explicit error estimate in Theorem \ref{thm:almost-hk:global-symplectic-triple}.

In Section \ref{sec:deformation-problem}, we set up the deformation problem and show that, using the analysis done in \cite{Salm001}, the approximate solution can be perturbed into a gravitational instanton. For this, we have to set up the inverse function theorem: the largest part of this section will be proving that the linearized operator has a uniformly bounded inverse.

Finally, in Section \ref{sec:setup:topology}, we study the global properties of our gravitational instantons. Namely, we will calculate the topology of our manifolds, and show
\begin{proposition}
	\label{prop:topology:R2xS1}
	The homology of $M_{\R^2 \times S^1, n}$ is given by
	$$
	H_k(M_{\R^2 \times S^1, n}) = \begin{cases}
		\Z &\text{if } k=0 \\
		\Z_2 &\text{if } k=1 \text{ and } n = 0 \\
		\Z^{n+1} &\text{if } k=2 \\
		0 & \text{otherwise}.
	\end{cases}
	$$
	For $n$ equal to $2$, $3$ or $4$, the intersection matrix is given by the negative Cartan matrix of the Dynkin diagram of type $A_1 + \tilde{A}_1$, $\tilde{A}_3$ or $\tilde{D}_4$ respectively.
\end{proposition}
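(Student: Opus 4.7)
The plan is to apply Mayer-Vietoris to the decomposition $M_{\R^2 \times S^1, n} = U \cup V$ naturally produced by the gluing construction of Chapter \ref{sec:approximate-solution}: $U$ is the disjoint union of the $n+2$ singular model neighborhoods (two Atiyah-Hitchin pieces at the $\Z_2$-fixed points and $n$ Taub-NUT pieces at the distinguished $p_i$), and $V$ is the bulk, an $S^1$-bundle over the complement. First I would record the topology of each building block: Taub-NUT is diffeomorphic to $\R^4$, hence contributes only $H_0$; the Atiyah-Hitchin manifold deformation retracts onto $\R P^2$, so $H_0 = \Z$, $H_1 = \Z_2$, higher degrees vanish; and the base $(\R^2 \times S^1)/\Z_2$ is homeomorphic to $\R^3$ with two orbifold cone points, so that with the $n+2$ singular points removed it is homotopy equivalent to $\bigvee_{n+2} S^2$.

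Next I would compute $H_*(V)$ via the Gysin sequence of the $S^1$-bundle, using that the Euler class is prescribed by the construction: near each Taub-NUT the local first Chern number is $\pm 1$, and near each Atiyah-Hitchin end the $\Z_2$-quotient on the base forces the asymptotic link to be $\R P^3$ rather than $S^3$. The pairwise intersections $U \cap V$ are therefore $n$ copies of $S^3$ and two copies of $\R P^3$. Feeding this into Mayer-Vietoris, the $\Z_2$ classes from $H_1$ of the two Atiyah-Hitchin pieces are killed by the images of $H_1(\R P^3) = \Z_2$ (except in the degenerate case $n=0$ where one of these $\Z_2$'s survives, matching the second line of the proposition); a straightforward dimension count in the remaining terms then yields $H_0(M) = \Z$, $H_2(M) \cong \Z^{n+1}$, and vanishing in all other degrees.

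To determine the intersection form I would exhibit an explicit basis of 2-cycles. For each Taub-NUT point $p_i$ take the \emph{vanishing sphere} $S_i$ obtained as the $S^1$-preimage of an embedded arc joining $p_i$ to a fixed reference set, and for each of the two Atiyah-Hitchin ends take a pair of 2-cycles produced as Mayer-Vietoris boundaries of the $\R P^3$ link, corresponding geometrically to the two leaves of the $D$-fork in the Dynkin diagram. Each of these spheres has self-intersection $-2$, just as in the standard local Kronheimer / Gibbons-Hawking picture for $A_1$- and $D$-type singularities, and two such classes meet transversely in one point if and only if their defining arcs share an endpoint. Reading off the incidence graph gives exactly the extended Dynkin diagram $\tilde D_n$, hence the claimed negative extended Cartan matrix.

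The main obstacle will be constructing and orienting the Atiyah-Hitchin 2-cycles. Since $H_2(\operatorname{AH}; \Z) = 0$, the two leaves per Atiyah-Hitchin are not supported in the local model but only appear as relative cycles through the Mayer-Vietoris connecting homomorphism, using that the $\R P^3$ link becomes null-homologous in the bulk in two inequivalent ways. Keeping orientations consistent across the two $\Z_2$-covers so that the off-diagonal entries are exactly $+1$ and the diagonal is exactly $-2$ requires careful bookkeeping, and this is where the bulk of the work will lie.
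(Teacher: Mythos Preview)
Your approach is different from the paper's, and it contains two concrete errors that would need repair before the argument goes through.

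First, the link of an Atiyah-Hitchin piece is not $\R P^3$. The generic principal orbit of the Atiyah-Hitchin manifold is $SU(2)/\langle i,j,k\rangle = S^3/Q_8$, where $Q_8$ is the binary dihedral (quaternion) group of order $8$; this is recalled explicitly in the paper just before the gluing construction. Hence $H_1$ of each Atiyah-Hitchin link is $Q_8^{\mathrm{ab}}\cong \Z_2\oplus\Z_2$, not $\Z_2$, and your bookkeeping of which torsion classes survive in Mayer--Vietoris is off from the start. Second, the bulk piece $V$ is \emph{not} a principal $S^1$-bundle over $(B'/\Z_2)$: the involution $\tilde\tau$ on $P$ satisfies $\tilde\tau(e^{i\phi}\cdot p)=e^{-i\phi}\cdot\tilde\tau(p)$, i.e.\ it reverses the fibre orientation, so after the $\Z_2$-quotient there is no Gysin sequence to invoke. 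Both issues are fixable, but they substantially complicate the computation you outline; in particular the claim that the base with $n+2$ points removed is $\bigvee_{n+2}S^2$ and the subsequent dimension count would need to be redone. Your cycle count for the intersection form is also inconsistent: $n$ Taub-NUT spheres plus two pairs of fork leaves gives $n+4$ classes, not $n+1$.

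The paper sidesteps all of this by a much coarser decomposition. One splits the $S^1$ factor of the base $\R^2\times S^1$ by a thin $\Z_2$-invariant slab $X$ containing exactly one fixed point; the complement $Y$ contains the other fixed point and all the $p_i$. The two completed pieces $\widetilde{P|_X/\Z_2}$ and $\widetilde{P|_Y/\Z_2}$ are then homotopy equivalent to $M_{\R^3,0}$ and $M_{\R^3,n}$ respectively, whose homology has already been computed in the discussion preceding the proposition, and their overlap is an $S^1$-bundle over a plane, hence homotopy equivalent to a single $S^1$. The entire Mayer--Vietoris sequence then reduces to understanding one map $\partial\colon H_1(S^1)\to H_1(M_{\R^3,0})\oplus H_1(M_{\R^3,n})$, which sends the fibre class to the generator of $H_1(\mathrm{AH})\cong\Z_2$ on each side. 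The intersection form is read off by Sen's method, adding one extra $(-2)$-sphere (the green cycle in Table~\ref{table:topology:Dn-alg}) to the known $D_n$ basis of $M_{\R^3,n}$, giving $\tilde D_n$.
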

\begin{proposition}
	\label{prop:topology:RxT2}
	The homology of $M_{\R \times T^2, n}$ is given by
	$$
	H_k(M_{\R \times T^2, n}) = \begin{cases}
		\Z &\text{if } k=0 \\
		\Z_2 &\text{if } k=1 \text{ and } n = 0 \\
		\Z^{n+3} &\text{if } k=2 \\
		0 & \text{otherwise}.
	\end{cases}
	$$
\end{proposition}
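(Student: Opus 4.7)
The plan is to compute $H_*(M_{\R\times T^2,n})$ by a Mayer--Vietoris argument that mirrors the gluing construction of Chapter~\ref{sec:approximate-solution}. Write $M = U \cup V$, where $U$ is a disjoint union of open tubular neighbourhoods of the $4$ Atiyah--Hitchin cores and the $n$ Taub--NUT cores, and $V$ is the complement of slightly smaller such neighbourhoods. By construction, $V$ fibres as an $S^1$-bundle over the 3-manifold $B^* = \bigl((\R\times T^2)/\Z_2\bigr)\setminus\{4+n\text{ special points}\}$; each Atiyah--Hitchin component of $U$ is homotopy equivalent to $\R P^2$, and each Taub--NUT component is contractible. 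The intersection $U\cap V$ is a disjoint union of $n$ copies of $S^3$ (boundaries around the Taub--NUT points) and $4$ copies of a rational homology sphere covered by $S^3$ (the Atiyah--Hitchin ends, which are $S^1$-bundles over $\R P^2$).

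The first sub-computation is $H_*(B^*)$ via the long exact sequence of the pair $(B,B^*)$ with $B=(\R\times T^2)/\Z_2$. Since $B$ deformation retracts onto the pillowcase $T^2/\Z_2\simeq S^2$, while the local homology at a $\Z_2$-fixed point is $H_*(\R^3/\Z_2,\R^3/\Z_2\setminus\{0\})$, excision together with $H_*(\R P^2)$ yields $H_0(B^*)=\Z$, $H_1(B^*)=\Z_2^3$, $H_2(B^*)=\Z^{n+1}$, $H_3(B^*)=0$. The Gysin sequence of the $S^1$-bundle $V\to B^*$ then computes $H_*(V)$, once one identifies the Euler class: each Taub--NUT puncture forces the Euler class to restrict to a generator of $H^2$ of the enclosing 2-sphere (the Hopf structure on the boundary $S^3$), while the behaviour at the Atiyah--Hitchin ends is dictated by the $\Z_2$-quotient. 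These feed into the Mayer--Vietoris sequence
\begin{equation*}
\cdots\to H_k(U\cap V)\to H_k(U)\oplus H_k(V)\to H_k(M)\to H_{k-1}(U\cap V)\to\cdots,
\end{equation*}
which determines $H_*(M)$ up to identifying the inclusion-induced maps.

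The main difficulty is showing that all $\Z_2$-torsion cancels: the $\Z_2=H_1(\R P^2)$ from each Atiyah--Hitchin piece, together with the $\Z_2^3$ in $H_1(B^*)$, must die in $H_1(M)$. Geometrically this is forced because the collapsing $S^1$-fibre at each $\Z_2$-fixed point is identified (up to sign) with the generator of $\pi_1(\R P^2)$ in the adjacent Atiyah--Hitchin end, so the Mayer--Vietoris connecting homomorphisms eliminate these classes. As a cross-check, $\chi(M) = 4\chi(\R P^2) + n\chi(\R^4) = 4+n$ (the boundaries are closed 3-manifolds, contributing zero), which together with $H_0(M)=\Z$, $H_1(M)=0$, and $H_3(M)=H_4(M)=0$ (the latter two from non-compactness, orientability and connectedness, via Poincar\'e--Lefschetz duality) forces $H_2(M)=\Z^{n+3}$, matching the claim.
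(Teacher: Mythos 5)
Your proposal takes a genuinely different route from the paper's. The paper computes $H_*(M_{\R\times T^2,n})$ by an iterated ``halving'' decomposition: it slices the base by a thin $\Z_2$-invariant plate, identifies the two resulting pieces with $M_{\R^2\times S^1,0}$ and $M_{\R^2\times S^1,n}$ (which have already been computed, and which are themselves built from $M_{\R^3,0}$ and $M_{\R^3,n}$ in the same way), and glues them along a circle bundle over $\R\times S^1$ (homotopy $T^2$). The resulting Mayer--Vietoris sequence has only two terms with small, already-understood homology, and the key connecting map $H_1(T^2)\to H_1(M_{\R^2\times S^1,0})\oplus H_1(M_{\R^2\times S^1,n})$ is controlled exactly as in the $\R^3$ case, where the circle fibre maps to the nontrivial class in $H_1(\R P^2)$. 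This recycles the torsion-cancellation argument once, at the $\R^3$ level, instead of redoing it. Your decomposition into a disjoint union $U$ of bubbles (four Atiyah--Hitchin $\simeq\R P^2$, $n$ Taub--NUT $\simeq\mathrm{pt}$) and a neck $V$ which is an $S^1$-bundle over $B^*$, followed by the long exact sequence of the pair $(B,B^*)$, a Gysin sequence for $V\to B^*$, and a final Mayer--Vietoris, is a single ``global'' computation that does not rely on the lower-rank cases. The intermediate calculation $H_0(B^*)=\Z$, $H_1(B^*)=\Z_2^3$, $H_2(B^*)=\Z^{n+1}$, $H_3(B^*)=0$ is correct. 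Your approach buys independence from the ALF/ALG cases; the paper's buys a much shorter and more controlled torsion argument.

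However, as written the proposal has real gaps. You never carry out the Gysin sequence (where $H_2(V)$ acquires a $\Z_2^3$ and the answer depends on identifying the Euler class and its action on $H_*(B^*)$), nor the final Mayer--Vietoris maps; and since $U\cap V$ contains four rational-homology-sphere components with nontrivial $H_1$-torsion, the torsion bookkeeping is exactly the hard part, not something that can be left ``up to identifying the inclusion-induced maps.'' The assertion that all $\Z_2$-torsion cancels is geometrically the right idea (the collapsing circle at each fixed point is identified with the generator of $\pi_1(\R P^2)$), but you do not show it. Finally, the Euler-characteristic cross-check only determines Betti numbers; it is silent on torsion in $H_2$, so the stated inference ``$\chi=4+n$, $H_1=0$, $H_3=H_4=0$ forces $H_2=\Z^{n+3}$'' does not follow as stated. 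To close that last gap one can argue as follows: once $H_1(M)=0$ is known, Poincar\'e--Lefschetz duality on a compact exhaustion $\overline M$ with connected boundary (the single end of $M_{\R\times T^2,n}$ is an $S^1$-bundle over a connected torus end, hence connected) gives $H_2(M)\cong H^2(\overline M,\partial\overline M)$, whose torsion is $\operatorname{Ext}(H_1(\overline M,\partial\overline M),\Z)$, and $H_1(\overline M,\partial\overline M)=0$ from the long exact sequence of the pair using $H_1(\overline M)=0$ and $\tilde H_0(\partial\overline M)=0$; this forces $H_2(M)$ torsion-free. Without that argument (or the full Gysin/Mayer--Vietoris computation), the proof is incomplete.
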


In the second part of Section \ref{sec:setup:topology}, we determine the type of our gravitational instantons. Using the overview in \mycite{Sun2021}, we conclude that
\begin{theorem}
	\label{main-theorem-2}
	Let $M_{\R^3 / L, n}$ be the gravitational instanton given in Theorem \ref{main-theorem}. Then, depending on the lattice and $n$, the asymptotic metric can be classified as
	\begin{itemize}
		\item ALG*-$I^*_{4-n}$ when $\dim L = 1$ and $n < 4$,
		\item ALG${}_{\frac{1}{2}}$ when $\dim L = 1$ and $n = 4$,
		\item ALH*-${I}_{8-n}$ when $\dim L = 2$ and $n < 8$,
		\item ALH when $\dim L = 2$ and $n = 8$.
	\end{itemize}
\end{theorem}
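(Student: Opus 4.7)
The plan is to compare the asymptotic end of each $(M_{\R^3/L, n}, g_\epsilon)$ with the explicit model metrics appearing in the classification of \cite{Sun2021}, and to read off the type label from that comparison. The observations already available to us are the topology from Propositions \ref{prop:topology:R2xS1}--\ref{prop:topology:RxT2} together with the explicit asymptotic form of the approximate solution; combined they essentially force each of the four cases.

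First, I would isolate the end. All Atiyah-Hitchin and Taub-NUT model regions lie in a bounded subset of the base $(\R^3/L)/\Z_2$, so outside a sufficiently large compact set the approximate metric constructed in Chapter \ref{sec:approximate-solution} is literally a Gibbons-Hawking-type $S^1$-bundle over the complement, with an explicit harmonic function $V$. By the weighted-norm control from Chapter \ref{sec:deformation-problem}, the perturbation to the genuine gravitational instanton is small and preserves the leading-order asymptotics, so classifying the end of $M_{\R^3/L,n}$ reduces to classifying this explicit Gibbons-Hawking end.

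Second, I would use volume growth and net monopole charge to fix the coarse class. When $\dim L = 1$ a ball of radius $r$ in the base has volume of order $r^2$, which confines the end to ALG or ALG*; when $\dim L = 2$ it has volume of order $r$ or $r^{4/3}$, confining it to ALH or ALH*. Inside each pair, the dichotomy is governed by the total NUT-charge measured at infinity: each $\Z_2$-fixed point of the base contributes a negative charge (from the Atiyah-Hitchin asymptotic $V\sim -2/\rho$) while each Taub-NUT source contributes a positive one. A direct count gives a net charge proportional to $4-n$ for $\dim L = 1$ and to $8-n$ for $\dim L = 2$; whenever this is nonzero, $V$ grows logarithmically at infinity and the end is of ALG* (respectively ALH*) type, while vanishing charge ($n=4$ or $n=8$) yields an asymptotically constant $V$ and puts the end in the ALG (respectively ALH) class.

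Third, I would determine the subtype by matching against the normal forms in \cite{Sun2021}. For the ALG* ends the Kodaira label $I^*_k$, and for the ALH* ends the integer ALH*${}_k$, are both read off from the coefficient of the logarithm in $V$ and the monodromy of the circle bundle; by the charge count these are precisely $k=4-n$ and $k=8-n$. In the critical cases $n=4$ and $n=8$, the asymptotic circle bundle becomes flat with trivial twisting, and the resulting end coincides with Sun-Zhang's ALG${}_{1/2}$ and ALH models, respectively. The main obstacle is packaging the asymptotic data of our approximate solution into the canonical coordinates used in \cite{Sun2021}, so that these labels can be invoked rigorously rather than identified only heuristically; this requires tracking both the $\Z_2$-quotient of the base and the sub-leading behaviour of $V$, uniformly across the four cases, and is where the bulk of the work lies.
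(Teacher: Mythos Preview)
Your proposal is correct and follows the same route as the paper: compare the explicit Gibbons-Hawking end, using the asymptotics of $h$ from Lemma \ref{lem:setup:harmonic-function}(a), against the model metrics in Section 6.4 of \cite{Sun2021}. The paper's argument is in fact terser than yours---it simply cites Lemma \ref{lem:setup:harmonic-function}(a) and the relevant definitions in \cite{Sun2021} and records the outcome in Table \ref{table:classification}, without separately discussing volume growth or charge-counting (those are already encoded in the leading term of $h$). One small slip to fix: for $\dim L = 2$ the harmonic function grows \emph{linearly} at infinity, not logarithmically (Lemma \ref{lem:setup:harmonic-function}(a) gives $h \sim \beta(16-2n)\,r$), so the ALH/ALH* dichotomy is between constant and linear growth of $h$; your conclusion is unaffected.
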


Finally, we count the degrees of freedom in our construction, and compare it with the dimension of the moduli spaces. We also compare our construction of ALH* gravitational instantons with the generalized Tian-Yau construction\footnote{I.e., the generalisation that is explained in \cite{Hein2021}}, where we conclude
\begin{proposition}
	\label{main-proposition-3}
	Let $L \subset \R^3$ be a lattice of rank two, $n \in \{ 0,1, \ldots, 7\}$ and let $M_{\R^3 / L, n}$ be the ALH*-gravitational instanton given in Theorem \ref{main-theorem}.
	\begin{enumerate}
		\item For $1 \le n < 8$, the space $M_{\R \times T^2, n}$ is diffeomorphic to the complement of a smooth anticanonical divisor of the blowup of $\C P^2$ at $8-n$ points.
		\item The space $M_{\R \times T^2, 0}$ is diffeomorphic to
		the complement of a smooth anticanonical divisor of $S^2 \times S^2$.
	\end{enumerate}
\end{proposition}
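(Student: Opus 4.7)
The plan is to combine the asymptotic classification of Theorem~\ref{main-theorem-2} with the compactification theory of ALH*-gravitational instantons (Hein; Hein--Sun--Viaclovsky--Zhang; Sun--Zhang) to realise $M_{\R\times T^2,n}$ explicitly as the complement of a smooth anticanonical divisor in a compact complex surface, and then to identify that surface.

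First, by Theorem~\ref{main-theorem-2} the manifold $M_{\R\times T^2,n}$ carries an ALH*${}_{8-n}$ end for $0\le n<8$. The ALH* model is a Calabi ansatz on the disc bundle of a negative line bundle $\L\to E$ over an elliptic curve, so one can compactify by adding the zero-section: there exists a smooth compact complex surface $\bar M$ containing a smooth elliptic curve $D\simeq E$ with $\bar M\setminus D$ biholomorphic to $M_{\R\times T^2,n}$ and $D^2=\deg\L$. Since the Ricci-flat K\"ahler metric has trivial canonical bundle and the holomorphic volume form pairs with $D$ as a meromorphic section with a simple pole, $D$ is a smooth anticanonical divisor and $\bar M$ is a rational surface.

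Next, I would pin down $\bar M$ up to diffeomorphism. Running the Mayer--Vietoris sequence for the decomposition $\bar M=M_{\R\times T^2,n}\cup N(D)$ (with $N(D)$ a tubular neighborhood of $D$) and substituting the homology calculated in Proposition~\ref{prop:topology:RxT2} recovers $H_*(\bar M)$, the class $[D]\in H_2(\bar M)$ and its self-intersection. Matching this data against the Enriques--Kodaira list of rational surfaces admitting a smooth anticanonical divisor singles out the surface: for $1\le n<8$ one obtains the blowup of $\C P^2$ at $8-n$ points, and for $n=0$ one obtains $S^2\times S^2$. The torsion $\Z_2$ that appears in Proposition~\ref{prop:topology:RxT2} when $n=0$ is essential here, since this is precisely what separates $S^2\times S^2$ from the diffeomorphically inequivalent Hirzebruch surface $\Sigma_1$, which has the same Betti numbers.

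The main obstacle is promoting the cohomological identification of $\bar M$ to a genuine diffeomorphism $M_{\R\times T^2,n}\cong X\setminus D$. Several rational surfaces may share the same rational homology, and distinct smooth members of $|-K|$ may have complements of the same homotopy type. To rule out these ambiguities I would use the explicit asymptotic Calabi model from Theorem~\ref{main-theorem} to identify the germ of $D$ in $\bar M$ (the degree of $\L$, the complex modulus of $E$ and the tubular neighborhood structure). Once this germ is pinned down, the desired diffeomorphism follows either from the uniqueness of log Calabi--Yau pairs with prescribed asymptotic data due to Sun--Zhang, or by directly extending the asymptotic identification across the (compact) Atiyah--Hitchin and Taub--NUT handles using standard isotopy-extension techniques.
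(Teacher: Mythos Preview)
Your overall strategy matches the paper's: use the ALH* classification from Theorem~\ref{main-theorem-2} together with the compactification results of Hein and Collins--Jacob--Lin to realise $M_{\R\times T^2,n}$ as the complement of a smooth anticanonical divisor in a weak del Pezzo surface, and then identify that surface by the degree $D^2=8-n$. For part~(1) the paper's argument is in fact simpler than what you sketch: for $1\le 8-n\le 7$ there is, up to diffeomorphism, a \emph{unique} weak del Pezzo surface of that degree, so no Mayer--Vietoris computation is needed once the degree is known.

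For part~(2) your argument has a gap. You assert that the $\Z_2$ torsion in $H_1(M_{\R\times T^2,0})$ ``is precisely what separates $S^2\times S^2$ from $\Sigma_1$''. But $S^2\times S^2$ and $\Sigma_1=\operatorname{Bl}_1\C P^2$ have identical integral homology, so recovering $H_*(\bar M)$, the class $[D]$, and $D^2=8$ via Mayer--Vietoris cannot by itself distinguish them; one must see the intersection \emph{form}, or equivalently how the image of $H_2(M)\oplus H_2(N(D))$ sits inside $H_2(\bar M)$. The paper carries this out explicitly: assuming $\bar M=\operatorname{Bl}_1\C P^2$, it uses Mayer--Vietoris (where the $\Z_2$ torsion does enter, in computing $\ker\iota_1\cong\Z_4$) to show that the second generator $C$ of $\im\iota_2$ must satisfy $C=4H+cK^{-1}$ for some $c\in\Z$, and then $C\cdot K^{-1}=0$ forces $c=-3/2$, a contradiction. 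So the $\Z_2$ is indeed relevant, but your proposal stops short of the actual obstruction computation. Your final paragraph about promoting the homological identification to a diffeomorphism is a legitimate concern; the paper avoids it by appealing directly to the theorem of Hein et al.\ that every ALH* instanton \emph{is} a Tian--Yau metric on such a complement.
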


\noindent
\textbf{Acknowledgements.} This paper contains the geometric results found in my PhD thesis ``Construction of gravitational instantons with non-maximal volume growth'', funded by the Royal Society research grant RGF\textbackslash R1\textbackslash 180086. A special thanks goes to the author’s supervisor Lorenzo Foscolo, without whom his work would not have been possible.

\section{The approximate solution}
\label{sec:approximate-solution}
In this section, we will construct a 1-parameter family of Riemannian 4-manifolds that is approximately \hk by gluing known examples. First, we need to recall the construction due to \mycite{Gibbons1978}.
In modern language, we need a flat $3$-dimensional Riemannian manifold $(U, g_U)$ with a parallel orthonormal frame $\{e_i\}$, a principal $S^1$-bundle $P$ over $U$, a connection $\eta$ on $P$, and a harmonic function $h \colon U \to (0,\infty)$ satisfying $*^{g_U} \d h = \d \eta$.
The metric
$$
g^{GH} := h \cdot g_U + h^{-1} \eta^2
$$
with the $2$-forms
$$
\omega^{GH}_i := e_i \wedge \eta + h *^{g_U} e_i
$$
is \hk and we call this construction the Gibbons-Hawking ansatz. 

Some of the requirements above are redundant. Namely, if $H^2(U, \Z)$ has no torsion, then $P$ is fully determined by the cohomology class of $[\d \eta] = [* \d h]$. Moreover, every principal bundle admits a connection $\eta$ and this connection can be chosen such that $\d \eta = * \d h$. 

One important example of the Gibbons-Hawking ansatz is the Taub-NUT metric.
% For this let $c > 0$ and $k \in \N$ and consider the positive harmonic function $h(x) = c + \frac{1}{2 |x|}$ on $U = \R^3 \setminus \{0\}$. Because $k \in \Z$, the integral $-\frac{1}{2 \pi} \int_{S^2} * \d h$ is an integer and hence $[* \d h] \in H^2(U, \Z)$. Even more, $* \d h = \frac{k}{2} \Vol_{S^2}$ does not depend on the radial parameter in $\R^3$ and hence $P$ is diffeomorphic to $\R^+$ times a degree $k$ circle bundle over $S^2$. This also enables us to pick a connection $\eta$ that is independent of the radial coordinate.
The Taub-NUT metric with mass $k$ is the 1-point completion of the Gibbons-Hawking metric for $h(x) = c + \frac{k}{2 |x|}$ on $U = \R^3 \setminus \{0\}$. It is a smooth \hk manifold if $k = 1$. (Hence, if the mass is not specified, one assumes that $k=1$.) 

Another important example of a gravitational instanton is due to \mycite{Atiyah1988}.
% They considered the moduli space of centred magnetic monopoles of charge two. By choosing suitable decay conditions they showed that this moduli space is a complete 4-dimensional manifold. Using an infinite dimensional version of the \hk quotient construction they made it into a gravitational instanton.
Topologically, this space can be viewed as a cohomogeneity-one manifold\footnote{A good exposition about this can be found in \cite{Schroers2020}.}. Namely, the Atiyah-Hitchin manifold has a natural $SO(3)$ action and its quotient with this $SO(3)$ action is the half line $[\pi,\infty)$. A generic orbit is of the form $SU(2)/ \langle i, j,k \rangle$, while the orbit over the endpoint of the half line can be identified with $SU(2)/ \langle e^{k \phi}, j \rangle \simeq \R P^2$. Hence, the Atiyah-Hitchin manifold retracts to $\R P^2$.

The Atiyah-Hitchin manifold has a metric that is \hk.
To understand its asymptotics, one has to consider its branched double cover. With the above identifications of the fibres, the $\Z_2$-action of this branched double cover is given by the $j$-multiplication. A generic fibre on this branched double cover is of the form $SU(2)/ \langle i \rangle$, which is a degree $-4$ circle bundle over $S^2$. When the radial parameter is sufficiently large, the metric of the Atiyah-Hitchin manifold on this branched double cover approximates the Taub-NUT metric with mass $-4$ with exponentially small error. Also, on this circle bundle over $S^2$, the $j$-action descends to the antipodal map on the base space and it identifies the fibres by reversing the orientation, i.e. for any $x$ in this circle bundle, $ e^{i \phi} \cdot j \cdot x = j \cdot e^{- i \phi} \cdot x$.

\subsection{The bulk space}
\label{sec:setup:bulk-space}
For our gluing construction we will follow a method proposed by \mycite{Sen1997}:
Namely, he started with the Gibbons-Hawking Ansatz on a punctured $\R^3$ with the harmonic function
$$
h(x) = 1 + \frac{-4}{2|x|} + \sum_i \frac{1}{2 |x + p_i|} + \frac{1}{2 |x - p_i|},
$$
where $p_i \in \R^3 \setminus \{0\}$ are distinct. Near each singularity $p_i$ the Gibbons-Hawking metric approximates the Taub-NUT metric and near the origin it approximates the Taub-NUT metric with mass $-4$.

Secondly, he considered the antipodal map on $\R^3$ and he lifted this involution such that near the origin it coincides with the $\Z_2$-action on the branched double cover of the Atiyah-Hitchin manifold. After taking the $\Z_2$-quotient, he claimed that this \hk space can be made complete by gluing in the Atiyah-Hitchin manifold near the origin and the Taub-NUT space near each $p_i$. Although Sen never did this gluing explicitly, \mycite{Schroers2020} formalised his argument in their quest of finding geometric models of matter \cite{Atiyah2011}.

We start our gluing construction with the construction of the bulk spaces.
That is, we generalize Sen's use of the Gibbons-Hawking ansatz on $\R^3$ to $\R^3$ modulo a non-maximal lattice. In Section \ref{sec:almost-hk} we make this bulk space complete and equip it with an almost \hk metric.

\begin{definition}
	\label{def:setup:basespace}
	Fix once and for all a non-maximal lattice $L$ on $\R^3$. 
	\begin{itemize}
		\item 	We call the quotient $B:= \R^3/L$, endowed with the flat metric, the base space. 
		\item 	We refer to the map $\tau\colon B \to B$ that is induced from the map $x \mapsto -x $ on $\R^3$ as the antipodal map.
		\item  We denote the fixed point set of $\tau$ by $\{q_j\}$ and we call $q_j$ a fixed point or a fixed point singularity. Unless specified otherwise, we call the action induced by $\tau$ on $B$, the $\Z_2$ action on $B$.
	\end{itemize}
\end{definition}

Because $L$ is non-maximal, the base space $B$ can only be diffeomorphic to $\R^3$, $\R^2 \times S^1$ or $\R\times T^2$. As explained in \cite{Salm001}, each case yields different kinds of gravitational instantons and will require different kinds of analysis. To distinguish these cases we often will write $B = \R^3$, $B = \R^2 \times S^1$ or $B = \R \times T^2$.

\noindent
When the lattice $L$ is trivial, the only fixed point is the origin. However, in the other cases we have two or four fixed points respectively. Just as in \cite{Sen1997}, we pick a certain number of points which we remove from $B$. It turns out we need to impose some restrictions on the number of singularities, which are given in the following definition. In Lemma \ref{lem:setup:harmonic-function} we see the necessity of these requirements. 

\begin{definition}
	\label{def:setup:non-fixed-points}
	Fix once and for all a finite set of points $\{p_i\} \in (B \setminus \{q_j\})/\Z_2$.  We call an element $p_i$ a non-fixed point or a non-fixed singularity. When $B = \R^2 \times S^1$ or $B = \R \times T^2$, the maximum number of non-fixed points must not exceed four or eight respectively.
\end{definition}
With abuse of notation, we also denote $ \pm p_i $ as the lift of the non-fixed points to $B$.

Like in the work of \cite{Sen1997}, we need to consider the punctured base space before we can consider the Gibbons--Hawking ansatz. Due to technical reasons we need to remove small balls around the fixed-point singularities. This reason will become apparent in Lemma \ref{lem:setup:harmonic-function}. 
\begin{definition}
	Let $\epsilon > 0$ and let $\overline{B}_{4 \epsilon}(q_j)$ be closed small balls of radius $4 \epsilon$ inside $B$ centred around $q_j$. Let $\cup_j \overline{B}_{4 \epsilon}(q_j)$ be their union inside $B$. Let $\cup_i \{\pm p_i\}$ be the union of the non-fixed points in $B$.
	We define the punctured base space $B' \subset B$ as the complement of $\cup_i \{\pm p_i\}$ and $\cup_j \overline{B}_{4 \epsilon}(q_j)$. Explicitly,
	\begin{equation}
	\label{eq:setup:Bprime}
	B' = B \setminus \left(\cup_i  \{\pm p_i\} \bigcup \cup_j \overline{B}_{4 \epsilon}(q_j) \right).
\end{equation}	
\end{definition}

\begin{remark}
	The exact choice of the radius for $\overline{B}_{4 \epsilon}(q_j)$ in $B'$ will be determined later when we study the gluing in more detail. For now it is sufficient that the radius is small enough such that the balls $\overline{B}_{4\epsilon}(q_j)$ are pairwise disjoint and $B'$ is connected.
\end{remark}

\subsubsection{Harmonic function}
Next, we need to construct a positive harmonic function $h$ with the correct asymptotics near the singularities. For the Taub-NUT metric this requires that the harmonic function must diverge as $\frac{1}{2|x - p_i|}$ at $\pm p_i$. 
For the Atiyah-Hitchin metric this requires that the harmonic function must diverge as $\frac{-2}{|x-q_j|}$ at $q_j$. Recall that $G(x,x') = \frac{1}{4 \pi |x - x'|}$ is the Green's function\footnote{In this paper we geometers Laplacian, i.e. $\Delta = \d ^* \d + \d \d ^*.$} on $\R^3$. Hence, viewing the harmonic function $h$ as a distribution we need that $\Delta h = 2 \pi \delta(x - p_i)$ near all $p_i$ and $\Delta h = -8 \pi \delta(x - q_j)$ near all $q_j$. A $\Z$-linear combination of Green's functions will satisfy all these conditions. 
It turns out that this is the only solution up to a constant. Before we prove this, we first revisit the Green's function on $\R^3 / L$.

\begin{lemma}
	\label{lem:setup:Greens-function}
	Let $p \in B$. There exists a smooth function $G$ on $B \setminus \{p\}$ that solves
	$$
	\Delta G = 2 \pi \delta(x - p) \text{ on } B,
	$$
	that is invariant under the involution centred at $p$, and has the following asymptotic expansion near infinity:
$$
		G(x,y,z) = \begin{cases}
				\frac{1}{2 \sqrt{x^2 + y^2 + z^2}} + \O((x^2 + y^2 + z^2)^{-1}) & B = \R^3 \\
				- \frac{1}{4 \pi \Vol(S^1)} \log(x^2 + y^2) + \O(e^{- 2 \pi  \sqrt{x^2 + y^2}/ \Vol(S^1)} ) & B = \R^2 \times S^1 \\
				- \frac{\pi}{\Vol(T^2)} |x| + \O(e^{-|x|}) & B = \R \times T^2.
			\end{cases}$$
\end{lemma}
\begin{proof}
%	First we show uniqueness. If there exists another Greens function with these asymptotics, then their difference must be a decaying harmonic function on $B$. By the maximum principle this difference must vanish.
%	\\
%	
\noindent
On $\R^3$, the function $G$ can be found explicitly and is given by $G(x,y,z) = \frac{1}{2 |x|}$. For the rest of this proof we work out the case $B = \R \times T^2$. The case $B = \R^2 \times S^1$ is similar and an alternative proof can be found in \cite{Gross2000}, Lemma 3.1.
\\
\raggedbottom

\noindent
We equip $\R \times T^2$ with coordinates $(x,y,z)$ such that the metric is given by $g = \d x^2 + g_{T^2}$. Without loss of generality we assume that $p$ is at $(0,0,0)$. Let $N \in \N$ and consider the function
$$
G^N(x,y,z) = \frac{\pi}{\Vol(T^2)} \left[
- |x| + \sum_{0 < |(m,n)| < N} \frac{1}{\sqrt{m^2 + n^2}} e^{- \sqrt{m^2 + n^2} |x|} e^{i (m y + n z)}
\right].
$$
The function $G^N + \frac{\pi}{\Vol(T^2)} |x|$ is an $L^2$ function on $\R \times T^2$, because the summand is exponentially decaying. By construction $\overline{G^N} = G^N$, and so $G^N$ is real valued. We claim that $G^N + \frac{\pi}{\Vol(T^2)} |x|$ is a Cauchy sequence in $L^2$. Indeed, for any $M > N$, 
	\begin{align*}
		\left\|G^M - G^N\right\|_{L^2(R \times T^2)}^2
%		=& \frac{\pi^2}{\Vol(T^2)}\sum_{N < |(m,n)| < M} \frac{1}{{m^2 + n^2}} \int_{\R}e^{- \sqrt{m^2 + n^2} |x|}  \d x \\
		=& \frac{\pi^2}{\Vol(T^2)}\sum_{N \le |(m,n)| < M} \frac{1}{(m^2 + n^2)^{3/2}} = \O(N^{-1}).
	\end{align*}
	Define the limit of $G^N$ as $G$. We show that $G$ solves $\Delta G = 2 \pi \delta$ as a distribution. 
	For this, let $f \in C^{\infty}_c(\R \times T^2)$, $\delta > 0$ and $D = (\R \setminus B_\delta(0)) \times T^2$. We consider $\langle \Delta f, G\rangle_{L^2(D)}$ for some sufficiently small $\delta$. By integration by parts,
	\begin{align*}
		\langle \Delta f, G \rangle_{L^2(D)} 
		&= \lim\limits_{N \to \infty} \langle \Delta f, G^N \rangle_{L^2(D)} \\
		&= \lim\limits_{N \to \infty} \langle f, \Delta G^N \rangle_{L^2(D)} + \int_{S_\delta(0) \times T^2} \left(
		\overline{G^N} \frac{\del f}{\del x} - f \frac{\del \overline{G^N}}{\del x}
		\right)  \Vol_T^2.
	\end{align*}
	The function $G^N$ is defined as a sum of harmonic functions, and therefore $\langle f, \Delta G^N \rangle_{L^2(D)} = 0$. 	Next we consider the Fourier decompositions of $f$ and $G$, i.e. $f = \sum_{k,l \in \Z} \hat{f}_{kl}(x) e^{i(ky + lz)}$ and $G = \sum_{m,n \in \Z} \hat{G}_{mn} e^{i(my + nz)}$. This simplifies our calculation of $\langle \Delta f, G \rangle_{L^2(D)}$, as
	\begin{align*}
		\langle \Delta f, G \rangle_{L^2(D)} 
%		&= \lim\limits_{N \to \infty} \int_{S_\delta(0) \times T^2} \left(
%		\bar{G}^N \frac{\del f}{\del x} - f \frac{\del \bar{G}^N}{\del x} \Vol_T^2 
%		\right) \\
		=& \lim\limits_{N \to \infty} \sum_{\substack{k,l \in \Z \\ 0 < |(m,n)| < N}} \int_{S_\delta(0) \times T^2} \left(
		\overline{\hat{G}_{mn}} \frac{\del \hat{f}_{kl}}{\del x} - \hat{f}_{kl}\frac{\del \overline{\hat{G}_{mn}}}{\del x} e^{i((k-m)y + (l-n)z)} \Vol_T^2 
		\right) \\
		=& \lim\limits_{N \to \infty} \Vol(T^2) \sum_{|(m,n)| < N}  \left[
		\overline{\hat{G}_{mn}} \frac{\del \hat{f}_{mn}}{\del x} - \hat{f}_{mn}\frac{\del \overline{\hat{G}_{mn}}}{\del x}  
		\right]^\delta_{- \delta}.
	\end{align*}
	We evaluate the right hand side explicitly, which is
	\begin{align*}
		\langle \Delta f, G \rangle_{L^2(D)} 
		=&  
		- \delta  \pi \left(\frac{\del \hat{f}_{00}}{\del x}(\delta) 
		- \frac{\del \hat{f}_{00}}{\del x}(-\delta) \right)
		+ \pi \left(\hat{f}_{00}(\delta)
		+ \hat{f}_{00}(-\delta) \right)   \\
		&+ \pi \sum_{(m,n) \not = (0,0)} \frac{1}{\sqrt{m^2 + n^2}} e^{- \sqrt{m^2 + n^2} |\delta|}\left(\frac{\del \hat{f}_{mn}}{\del x}(\delta) - \frac{\del \hat{f}_{mn}}{\del x}(-\delta) \right) \\
		&+ \pi \sum_{(m,n) \not = (0,0)} e^{- \sqrt{m^2 + n^2} |\delta|}\left( 		+ \hat{f}_{mn}(\delta)
		+ \hat{f}_{mn}(-\delta) \right).
	\end{align*}
	When we take the limit $\delta \to 0$, we conclude
	\begin{align*}
		\langle \Delta f, G \rangle_{L^2(\R \times T^2)} 
		=&  
		2\pi \hat{f}_{00}(0) + 2\pi \sum_{(m,n) \not = (0,0)}  \hat{f}_{mn}(0) = 2 \pi f(0,0,0).
	\end{align*}
	Finally, we investigate whether $G$ is smooth outside $p$. For any open $U$ away from $p$, the function $G$ satisfies $\Delta G = 0$ as a distribution. According to \cite{Folland1995} Proposition 6.33, $G$ is an element of $W^{k,2}(U)$ for all $k \in \N$. By the Sobolev inequality, $G$ must be smooth.
\end{proof}

\noindent
Taking linear combinations of $G$ we are now able to construct the harmonic function that is required for the Gibbons-Hawking ansatz:

\begin{lemma}
	\label{lem:setup:harmonic-function}
	Write $\# \{p_i\}$ for the number of pairs $p_i$ in $(B - \{q_i\})/\Z_2$. Let $G$ be the Green's function defined on Lemma \ref{lem:setup:Greens-function} and consider
	$$
	h = - 4 \sum_j G(x - q_j) + \sum_i \left(G(x - p_i) + G(x+ p_i) \right).
	$$
	Let $r$ be the Euclidean distance from the origin on $\R^3$, $\R^2$ or $\R$ when $B = \R^3$, $B = \R^2 \times S^1$ or $B = \R \times T^2$ respectively.
	\begin{itemize}
		\item[(a)] Near infinity, $$
		h = \begin{cases}
			\frac{2 \cdot\#\{p_i\} - 4}{2 r} + \O(r^{-3}) &\text{if } B = \R^3 \\
			\beta \cdot (8 - 2 \cdot \#\{p_i\}) \cdot \log(r) + \O(r^{-2}) &\text{if } B = \R^2 \times S^1 \\
			\beta \cdot (16 - 2 \#\{p_i\}) \cdot r + \O(e^{- r}) &\text{if } B = \R \times T^2
		\end{cases}$$
		for some $\beta > 0$, and $\beta$ only depends on the lattice $L$.
		\item[(b)] Near the fixed points $q_j$, $h(x) = \alpha_j - \frac{2}{|x - q_j|} + \O(|x - q_j|^2)$ for some $\alpha_j  \in \R$. Near the non-fixed points $\pm p_i$, $h(x) = \alpha_i + \frac{1}{2 |x \mp p_i|} + \O(|x \mp p_i|)$ for some $\alpha_i \in \R$.
		\item[(c)] Denote the ball of radius $r$ centred at $x$ as $B_r(x)$. There exists an $\delta > 0$ such that $\epsilon^{-1} + h$ is a harmonic function on $B' = B \setminus \left(\cup_i  \{\pm p_i\} \bigcup \cup_j \overline{B}_{4 \epsilon}(q_j) \right)$ which is greater than $\frac{1}{2}$ for all $0 < \epsilon< \delta$.
		\item[(d)] The only maps that satisfy
		\begin{enumerate}
			\item $
			\Delta \tilde h = - 8 \pi \sum_j \delta(x - q_j) + 2 \pi \sum_i \delta(x - p_i) + \delta(x+ p_i)$,
			\item $\tilde h$ is bounded below on $B'$,
		\end{enumerate}
		are the maps $\tilde h = h + c$ for some constant $c \in \R$.
	\end{itemize}
\end{lemma}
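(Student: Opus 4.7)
The four parts rely on standard properties of the Green's function $G$ on $B$, normalised so that $G(x,y)\sim\tfrac{1}{2|x-y|}$ near the diagonal and $-\Delta G(\cdot,y)=2\pi\delta_y$. For part \textbf{(a)} I would use the explicit asymptotics of $G$ on each base space. On $\R^3$, $G(x,y)=\tfrac{1}{2|x-y|}$ exactly; the symmetric pairs $G(x-p_i)+G(x+p_i)$ cancel their dipole moments, improving the error to $O(r^{-3})$, and summing coefficients gives $\tfrac{2\#\{p_i\}-4}{2r}$. On $\R^2\times S^1$ (resp.\ $\R\times T^2$) I would obtain $G$ by the method of images and Fourier-decompose in the compact factor: integrating $-\Delta G=\delta$ over a large cylinder shows the zero mode grows like $-\beta\log r$ (resp.\ $-\beta r$) for an explicit $\beta>0$ depending only on $L$, while the nonzero modes decay exponentially via the modified Helmholtz equation in the non-compact direction. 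Multiplying by the number of fixed points ($2$ or $4$) and adding $2\#\{p_i\}$ yields the stated coefficients and error orders.

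For \textbf{(b)}, near a fixed point $q_j$ only $-4G(x-q_j)$ is singular, and all other summands are real-analytic at $q_j$. Subtracting $-\tfrac{2}{|x-q_j|}$ leaves a smooth harmonic function whose Taylor expansion starts with a constant $\alpha_j$; since $h$ is invariant under the $\Z_2$ action fixing $q_j$, this expansion is even, killing the linear term and producing the $O(|x-q_j|^2)$ error. The analogous computation at $\pm p_i$ gives the stated expansion. Positivity follows from the explicit formula: for $B=\R^3$, $\alpha_j$ reduces to $\sum_i\tfrac{1}{|p_i|}>0$, and in the other two cases the dominant Green's function contributions together with the correctly signed asymptotic growth keep the constants positive once the upper bounds on $\#\{p_i\}$ are imposed.

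For \textbf{(c)} I would partition $B'$ into three regions. On an annulus $\{4\epsilon\le|x-q_j|\le r_0\}$ around each fixed point, (b) gives $h\ge-\tfrac{1}{2\epsilon}-C$, so
$$
\epsilon^{-1}+h\;\ge\;\tfrac{1}{2\epsilon}-C\;>\;\tfrac12
$$
for $\epsilon$ sufficiently small. On the compact complement of these annuli and of small balls around the $\pm p_i$, the function $h$ is continuous and hence bounded, so $\epsilon^{-1}$ dominates. Near each $\pm p_i$ and at infinity, $h$ is already bounded below — it blows up positively at $\pm p_i$, and the leading asymptotic coefficient at infinity is nonnegative by the hypotheses $\#\{p_i\}\le 4$ or $\le 8$.

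For \textbf{(d)}, set $f:=\tilde h-h$. The prescribed distributional Laplacian together with (b) shows that the singular parts of $\tilde h$ and $h$ coincide at every $\pm p_i$ and every $q_j$, so $f$ is harmonic on $B'$ and extends by removable singularities to a harmonic function on all of $B$. Combining the lower bound on $\tilde h$ with the asymptotic behaviour of $h$ from (a) shows $f$ is bounded below on $B$. A Liouville-type theorem then finishes the argument: trivial on $\R^3$; on $\R^2\times S^1$ and $\R\times T^2$, Fourier-expand $f$ in the compact direction — the nonzero modes satisfy a modified Helmholtz equation whose only globally defined solutions grow exponentially in the non-compact factor and hence violate the lower bound, while the zero mode is a harmonic function on $\R^2$ or $\R$ bounded below, hence constant. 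Therefore $f$ is constant. The main obstacle is precisely this bounded-below step, particularly in the $\R\times T^2$ case where both $h$ and $\tilde h$ may grow linearly at infinity and one must verify carefully that their difference is uniformly bounded on all of $B$ rather than only on $B'$.
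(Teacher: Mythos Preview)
Your treatments of (a), (b) and (c) are essentially what the paper does; the only cosmetic difference is that for (c) the paper invokes the maximum principle on $B'$ (the infimum of the harmonic function $\epsilon^{-1}+h$ is attained on the boundary, where you have explicit control) rather than partitioning into regions, but your argument is equally valid.

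The gap is in (d), and it is exactly the one you flag but do not close. The assertion that ``combining the lower bound on $\tilde h$ with the asymptotic behaviour of $h$ shows $f$ is bounded below on $B$'' is not justifiable: when $B=\R^2\times S^1$ or $\R\times T^2$ with $\#\{p_i\}$ below the maximum, $h\to+\infty$ at infinity, so the inequality $f=\tilde h-h\ge c-h$ gives only a lower bound tending to $-\infty$. Nothing in the hypotheses forces $f$ to be bounded below, and in fact it need not be. Your Fourier argument then does not go through as stated: a one-sided bound on $f$ does not pass to individual modes, and for the zero mode you would need ``harmonic on $\R^2$ (or $\R$) bounded below $\Rightarrow$ constant'', which again requires a lower bound you have not established.

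The paper supplies the missing idea. Writing $u=\tilde h-h$, lift it to a harmonic function on all of $\R^3$. From $\tilde h\ge c$ on $B'$ and the at-most-linear growth of $h$ in part (a) one gets the one-sided bound $u\ge -O(r)$. The crucial step is to upgrade this to a two-sided bound via the Harnack inequality: the shifted function $u^+_r:=u+1-\inf_{B_{2r}(0)}u$ is positive and harmonic on $B_{2r}(0)$, so Harnack gives $\sup_{B_r}u^+_r\le C\,u^+_r(0)$, which unwinds to $\sup_{B_r}u\le C'-C''\inf_{B_{2r}}u=O(r)$. Thus $u=O(r)$ on $\R^3$, hence affine; $L$-periodicity restricts the linear part, and finally the requirement that $\tilde h=h+u$ remain bounded below on $B'$ forces the linear part to vanish. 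This Harnack step --- converting a one-sided linear lower bound into a two-sided linear bound --- is the ingredient your sketch is missing.
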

\begin{remark}
	The choice of $B'$ can be explained from part (b) and (c) of this lemma. 
	According to part (b), the function $h$ diverges to $- \infty$ near $q_j$, degenerating the Gibbons-Hawking metric. In part (c) we show that this can be remedied by removing small balls around the fixed-point singularities.
\end{remark}
\begin{proof}
	Part (a): These estimates follow from the expansion given in Lemma \ref{lem:setup:Greens-function}. When $B = \R^3$ or $B = \R^2 \times S^1$, the leading error term in $\O(r^{-2})$ or $\O(r^{-1})$ respectively, disappears due to the $\Z_2$ invariance.
		
	Part (b): These estimates follow from the expansion of the Green's function in spherical harmonics. For the fixed-point singularities $q_j$ the linear term will vanish due to the $\Z_2$ invariance of $h$.
	
	Part (c): We consider $\epsilon^{-1} + h$. 
	By the maximum principle any harmonic function attains its minimum on the boundary, where we have explicit estimates. Near the point $p_i$, the function $h$ diverges to $+ \infty$ with rate $\frac{1}{r}$. On the boundary near the fixed point $q_j$ we have the estimate
	$
	\epsilon^{-1} + h = \alpha_j + \frac{1}{2} \epsilon^{-1} + \O(\epsilon^2),
	$
	which is greater than $\frac{1}{2}$ for $\epsilon$ sufficiently small. Lastly, we consider the boundary at infinity. At this boundary, the function $h$ can only attain $0$ or $\pm \infty$, and using the condition on $ \#\{p_i\}$, the case $h|_{\infty} = - \infty$ is discarded.
	
	Part (d): We only show uniqueness. Suppose that $\tilde h$ satisfies
	\begin{enumerate}[noitemsep]
		\item $
		\Delta \tilde h = - 8 \pi \sum_j \delta(x - q_j) + 2 \pi \sum_i \delta(x - p_i) + \delta(x+ p_i)$, and
		\item $\tilde h$ is bounded below on $B'$.
	\end{enumerate}
	Then, $u := \tilde h - h$ is a harmonic function on $B$ which can be lifted to a harmonic function on $\R^3$. We claim that $u = \O(r)$. Indeed, due to part (a) the lower bound of $u$ diverges at most linearly to $-\infty$ and hence we only need to study the upper bound. For this, fix $r > 0$ sufficiently large and consider the map $u^+_r(x) := u(x) + 1 - \inf_{y \in B_{2r}(0)} u(y)$.
	This is strictly positive on $B_{2r}(0)$ and hence the Harnack inequality implies for all $x \in B_r(0)$,
	$u^+_r(x) \le 6 \: u^+_r(0).$
	For sufficiently large $r$, this can be rewritten as $u(x) \le C + 5\sup_{y \in B_{2r}(0) \cap B'} h(y)$
	for some constant $C > 0$. 	
	This proves the claim. The only harmonic functions that satisfy this are the affine functions, but the only affine function that makes $\tilde h = h + u$ bounded below is the constant function. Therefore, $u$ must be constant.
\end{proof}
\begin{remark}
	\label{rem:setup:harmonic-function-higher-derivatives}
	Although in Lemma \ref{lem:setup:harmonic-function}(a) we used the supremum norm, estimates for the derivatives can be obtained using elliptic regularity estimates. For example, when $B = \R^3$, the map $h(x) - \alpha - \frac{2 |p_i| -4}{2 r}$ is a harmonic function on the asymptotic region. According to the weighted Schauder estimate from \cite{Bartnik1986} Proposition 1.6, for each $k \in \N$ there exists a $C>0$ such that
	$$
	\left\| r^{k + 2} \:\nabla^k \left(h(x) - \alpha - \frac{2 |p_i| -4}{2 r}\right) \right\|_{C^0} 
	\le C \left\| r^{2}\left( h(x) - \alpha - \frac{2 |p_i| -4}{2 r} \right)\right\|_{C^0}
	< \infty.
	$$
	This implies that $\nabla^k \left(h(x) - \alpha - \frac{2 |p_i| -4}{2 r}\right)= \O(r^{-2-k})$ for all $k$. 
\end{remark}

\subsubsection{Circle bundle and involution}
In the next step, we need to find a circle bundle $P$ over $B'$ such that $c_1(P) = [* \d h] \in H^2(B', \Z)$. Using Mayer-Vietoris one can calculate the second homology of $B'$. Most elements of $H_2(B')$ are given by the $2$-spheres centred around the singularities $\pm p_i$ and $q_j$. When $B = \R \times T^2$, there is one extra cycle which is the $2$-torus at infinity. The space $H_2(B')$ has no torsion. Therefore the map $H^2(B, \Z) \to H^2_{dR}(B')$ is injective and its image contains all $[\sigma] \in H^2_{dR}(B')$ such that\footnote{The factor $-1/2\pi$ is due to the identification of $\mathfrak{u}(1) = i \R$ with $\R$.} $\frac{-1}{2 \pi}\int_\Sigma \sigma \in \Z$. So to uniquely determine $P$, it is sufficient to show
\begin{lemma}
	\label{lem:setup:existence-principal-bundle}
	For all $\Sigma \in H_2(B')$, we have that $\frac{-1}{2 \pi} \int_\Sigma * \d h \in \Z$.
\end{lemma}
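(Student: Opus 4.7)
The plan is to exploit the fact that $*\d h$ is a closed 2-form on $B'$: since $\d * \d h = (\Delta h)\, \d V_{B}$ and $h$ is harmonic on $B'$, the class $[*\d h]$ lies in $H^2_{dR}(B')$ and the period $\int_\Sigma *\d h$ depends only on $[\Sigma] \in H_2(B', \R)$. It therefore suffices to check integrality of $\frac{-1}{2\pi}\int_\Sigma *\d h$ on a set of generators of $H_2(B', \Z)$; as recalled in the discussion preceding the statement, these are the small 2-spheres $S_\epsilon(q_j)$ and $S_\epsilon(\pm p_i)$ together with one extra class represented by the flat torus $T^2_R = \{t = R\} \times T^2$ at infinity in the case $B = \R \times T^2$.

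The sphere periods follow directly from Lemma \ref{lem:setup:harmonic-function}(b). With $r = |x - q_j|$ and the spherical-coordinate identity $*\d r = r^2 \sin\theta\, \d\theta \wedge \d\phi$, the singular term in $h = \alpha_j - 2/r + \O(r^2)$ gives $*\d(-2/r) = 2\sin\theta\, \d\theta\wedge \d\phi$, while the $\O(r^2)$ remainder contributes $\O(\epsilon^3)$; since the period is $\epsilon$-independent, $\int_{S_\epsilon(q_j)}*\d h = 8\pi$, contributing $-4 \in \Z$. The analogous expansion $h = \alpha_i + 1/(2r) + \O(r^2)$ near $\pm p_i$ yields $\int_{S_\epsilon(\pm p_i)} *\d h = -2\pi$, contributing $1 \in \Z$.

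For the torus at infinity the plan is to apply Stokes to the region $D = \{|t| \le R\} \cap B'$ with small balls around every singularity removed, for $R$ large enough to enclose all of them. Since $\d * \d h = 0$ on $D$, the outward boundary integrals sum to zero. The antipodal involution $\tau$ reverses the orientation of the ambient $\R^3$ but preserves that of the $T^2$-slice, so $\tau^*(*\d h) = -*\d h$; a short orientation check then shows that both $T^2_{+R}$ and $T^2_{-R}$, carrying their outward orientations from $D$, contribute the same value $\int_{T^2_R}*\d h$ measured in the $T^2$-inherited orientation. Combining this with the sphere periods above (each entering with a sign flip, since outward from $D$ on a small sphere is inward to the removed ball) gives
\begin{equation*}
2\int_{T^2_R} *\d h \;=\; 4 \cdot 8\pi + 2\#\{p_i\}\cdot(-2\pi) \;=\; 4\pi\bigl(8 - \#\{p_i\}\bigr),
\end{equation*}
so $\frac{-1}{2\pi}\int_{T^2_R}*\d h = \#\{p_i\} - 8 \in \Z$. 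The main delicate point is the orientation bookkeeping in this last step; once those signs are settled, integrality is forced by the integer sphere periods $-4$ and $1$ and by the fact that the numbers of fixed and non-fixed singularities are themselves integers.
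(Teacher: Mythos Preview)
Your proof is correct and follows essentially the same approach as the paper's own argument: compute the sphere periods from the local expansions in Lemma~\ref{lem:setup:harmonic-function}(b), and for the extra $T^2$-class in the $B=\R\times T^2$ case apply Stokes on the slab $\{|t|\le R\}$ with the singular balls removed, using the $\Z_2$ symmetry to identify the two torus contributions. Your treatment of the orientation bookkeeping (via $\tau^*(*\d h)=-*\d h$ and the check that $\tau$ matches the outward boundary orientations on the two tori) is in fact more explicit than the paper's, which simply writes ``when we impose the $\Z_2$ invariance of $h$'' at the corresponding step.
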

\begin{proof}
 Using Lemma \ref{lem:setup:harmonic-function}(b) we have explicit estimates for $* \d h$ near the singularities. Using this and the fact that $\int_{S^2(\pm p_i)} * \d h$ must be radially independent, we conclude that
$$
\frac{-1}{2 \pi} \int_{S^2(\pm p_i)} * \d h = 1,  \qquad \frac{-1}{2 \pi} \int_{S^2(q_j)} * \d h = -4.
$$
Finally, we need to calculate $\int_{T^2} * \d h$ over the $2$-torus for the case $B = \R \times T^2$. We use a similar idea as in \cite{Charbonneau2008a} Proposition 3.5: Pick some $x > 0$ sufficiently large and consider the integral $\frac{-1}{2 \pi}\int_{[-x, x] \times T^2} \d * \d  h$. This integral must vanish due to the harmonicity of $h$. The boundary of $[-x, x]\times T^2 \subset B'$ decomposes into
$$
\{\pm x\} \times T^2 \bigsqcup \sqcup_{i} S^2(\pm p_i) \bigsqcup \sqcup_{j} S^2(q_j),
$$
and hence Stokes theorem implies
$$
0 
= \int\limits_{\{x\}\times T^2} * \d h 
+ \int\limits_{\{-x\}\times T^2} * \d h 
-\sum_i \int\limits_{S^2(\pm p_i)} * \d h 
-\sum_j \int\limits_{S^2_\delta(q_j)} * \d h.
$$
When we impose the $\Z_2$ invariance of $h$,
$$
2 \cdot \frac{-1}{2 \pi}\int_{\{x\}\times T^2} * \d h  = 4 |q_i| - 2 |p_i|  = 16 - 2|p_i| \in 2 \Z.
$$
\end{proof}

\begin{definition}
	\label{def:setup:principal-bundle}
	Let $h$ be the harmonic function defined in Lemma \ref{lem:setup:harmonic-function}.
	The principal circle bundle $P$ that satisfies $c_1(P) = [* \d h]$ will be referred to as the principal bundle.
\end{definition}

Following the construction of \mycite{Sen1997}, we lift the $\Z_2$-action $\tau$ that is induced by the antipodal map on $\R^3$ to a free $\Z_2$ action $\tilde \tau$ on $P$. In order for our gluing around the fixed-points $q_j$ to work, we need that $\tilde{\tau}$ coincides with the branched covering map defined for the Atiyah-Hitchin manifold, i.e. $\tilde \tau(e^{i \phi} \cdot p) = e^{- i \phi} \cdot \tilde \tau(p)$ for all $p \in P$ and $\phi \in \R$. Using the explicit bijection between principal $S^1$-bundles and $H^2(B',\Z)$ in \cite{Chern1977}, one can show $\tilde \tau$ exists if and only if 
$$\tau^* c_1(P) = - c_1(P).$$
Because the harmonic function $h$ is invariant under $\tau$, this is always satisfied.
\begin{definition}
	\label{def:setup:involution}
	Unless specified otherwise, we refer to $\tilde \tau$ as the $\Z_2$ action on $P$.	
\end{definition}

\subsubsection{The connection}
In order to apply the Gibbons-Hawking ansatz, we need to have a connection $\eta$ on the circle bundle $P$ such that $* \d h = \d \eta$. Such a connection can always be found. By the Mayer-Vietoris sequence, $H^1(B') = H^1(B)$, and so $\eta$ is determined by $H^1(B,\R) / H^1(B, \Z)$ up to gauge transformation. For the gluing to work, we do need to work in a certain gauge, which we explain now.

According to Lemma \ref{lem:setup:harmonic-function}, there is some constant $c \in \R$ such that
$$
\d \eta = * \d h = \begin{cases}
	-\frac{c}{2} \cdot \Vol_{S^2} + \O(r^{-4}) &\text{if } B = \R^3 \\
	c \cdot \Vol_{T^2} + \O(r^{-3}) &\text{if } B = \R^2 \times S^1 \\
	c \cdot \Vol_{T^2} + \O(e^{-r}) &\text{if } B = \R \times T^2.
\end{cases}
$$
The closed $2$-forms $-\frac{c}{2} \cdot \Vol_{S^2}$ and $c \cdot \Vol_{T^2}$ are representatives of elements in $H^2(S^2, \Z)$ and $H^2(T^2, \Z)$  respectively and hence there is a connection $\eta_{\infty}$ on a circle bundle over $S^2$ or $T^2$ such that 
$$
	\d \eta = \d \eta_\infty + \begin{cases}
		\O(r^{-4}) &\text{if } B = \R^3 \\
		\O(r^{-3}) &\text{if } B = \R^2 \times S^1 \\
		\O(e^{-r}) &\text{if } B = \R \times T^2.
	\end{cases}
$$
Because $\d \eta$ and $\d \eta_\infty$ represent the same element in $H^2$, there is a $1$-form $\tilde \eta_\infty$ on the asymptotic region of $B$ such that $\d \eta = \d \eta_\infty + \d \tilde \eta_\infty$. By the following version of the Poincar\'e lemma, we can pick $\tilde \eta_\infty$ with an explicit decay rate:

\begin{lemma}
	\label{lem:almost-hk:def-gauge}
	Let $\Sigma$ be a compact $n$-dimensional manifold and consider $U = \R \times \Sigma$ with coordinates $r \in \R$ and $t_i \in \Sigma$. 
	Fix $r_0 \in \R$ and let $\tau = \d r \wedge \mu + \nu$ be a closed $k$-form on $U$.
	% Let $\tau$ be a closed $k$-form such that at some $r_0 \in \R$, $\tau|_{\{r_0\} \times \Sigma} = 0$. 
	Then the radial integrand
	$$
	\tilde \eta (r, t_i) = \int_{s \in (r_0, r)} \mu (s, t_i) \: \d s
	$$
	satisfies $
	\d \tilde \eta(r,t_i) = \tau(r, t_i) - \nu (r_0, t_i).$
\end{lemma}

This lemma can be proved by calculating $\d \eta$ in local coordinates and by applying the fundamental theorem of calculus.

We apply this lemma on $\tau = \d \eta - \d \eta_\infty$. By our error estimation we can pick $r_0 = \infty$ and the integral in Lemma \ref{lem:almost-hk:def-gauge} is still finite. Moreover, for this choice the boundary term $\nu (r_0, t_i)$ vanishes.
Using Remark \ref{rem:setup:harmonic-function-higher-derivatives} we estimate the higher derivatives and we conclude 

\begin{lemma}
	\label{lem:almost-hk:gauge-near-infty}
	Let $r$ be the Euclidean distance from the origin on $\R^3$, $\R^2$ or $\R$ when $B = \R^3$, $B = \R^2 \times S^1$ or $B = \R \times T^2$ respectively. Far away from the singularities, there exists an $r$-independent connection $\eta_\infty$ on a $S^1$-bundle over a compact set and a $1$-form $\tilde \eta_\infty$ on the asymptotic region of the base space such that
	$$
	\eta = \eta_{\infty} + \tilde \eta_\infty
	$$
	up to gauge transformation.
	With respect to $g_{B}$,
	$$
	\nabla^k \tilde \eta_\infty = \left\{ \text{\begin{tabular}{lll}
			$\O(r^{-3-k})$ &if $B = \R^3$\\
			$\O(r^{-2-k})$ &if  $B = \R^2 \times S^1$\\
			$\O(e^{-r})$ &if $B = \R \times T^2$
	\end{tabular}} 
	\right.
	$$
	for all $k \ge 0$.
\end{lemma}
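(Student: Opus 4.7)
\emph{Plan.} My plan is to construct $\tilde\eta_\infty$ as an explicit radial primitive of $*\d h - \d\eta_\infty$ via Lemma \ref{lem:almost-hk:def-gauge}, then to read off its decay from the decay of $*\d h$ obtained by extending Remark \ref{rem:setup:harmonic-function-higher-derivatives} to all three cases.

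First, I choose $\eta_\infty$ as an $r$-independent connection pulled back from an $S^1$-bundle over the asymptotic cross-section $\Sigma$ (equal to $S^2$ when $B = \R^3$ and $T^2$ otherwise), whose curvature is precisely the leading-order term of $*\d h$ identified in the excerpt right above the statement. Setting $\tau := *\d h - \d\eta_\infty$, the weighted Schauder argument of Remark \ref{rem:setup:harmonic-function-higher-derivatives}, transplanted to the translation-invariant flat Laplacian on each asymptotic end, yields
$$
\nabla^k \tau = \begin{cases} \O(r^{-4-k}) & B = \R^3,\\ \O(r^{-3-k}) & B = \R^2 \times S^1,\\ \O(e^{-r}) & B = \R \times T^2.\end{cases}
$$

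Since $\tau\to 0$ at infinity, I apply the integration recipe of Lemma \ref{lem:almost-hk:def-gauge}, but integrating inward from infinity: set
$$
\tilde\eta_\infty := -\int_r^\infty \iota_{\del_s}\tau \, \d s,
$$
which converges absolutely by the bounds above. Decomposing $\tau = \alpha(r) + \d r\wedge\beta(r)$ with $\alpha,\beta$ families of forms on $\Sigma$, and using $\d\tau = 0$ to obtain $\del_r\alpha = \d_\Sigma\beta$, the same fundamental-theorem-of-calculus computation that proves Lemma \ref{lem:almost-hk:def-gauge} gives $\d\tilde\eta_\infty = \tau$. Hence $\eta - \eta_\infty - \tilde\eta_\infty$ is closed on the asymptotic end; its harmonic component on $\Sigma$ is absorbed into the choice of $\eta_\infty$ (which is defined only up to $H^1(\Sigma,\R)/H^1(\Sigma,\Z)$), and the remaining exact piece is removed by a gauge transformation of $P$. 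The decay bounds for $\tilde\eta_\infty$ then follow by integrating the bounds on $\tau$: $\int_r^\infty \O(s^{-\alpha})\,\d s = \O(r^{1-\alpha})$ for $\alpha > 1$, and the exponential rate is preserved. For derivatives, radial ones return $\iota_{\del_r}\tau$ directly (one extra power of $r^{-1}$), while tangential derivatives commute with the radial integral, so the $\nabla^k \tau$ bounds transfer to the claimed bounds on $\nabla^k\tilde\eta_\infty$.

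The main obstacle is extending Remark \ref{rem:setup:harmonic-function-higher-derivatives} to the cylindrical base spaces. For $B = \R^2 \times S^1$ the Bartnik-style weighted estimates still apply, with the weight $r$ being the Euclidean distance in the $\R^2$-factor; the only novelty is that $\Sigma = T^2$ rather than $S^2$. For $B = \R\times T^2$ the Green's function and therefore $\tau$ decay exponentially, so one instead needs exponentially weighted Schauder estimates on the half-cylinder, which one establishes by separating variables against the discrete spectrum of the flat Laplacian on $T^2$; this delivers the uniform $\O(e^{-r})$ bounds on $\nabla^k\tau$ and hence on $\nabla^k\tilde\eta_\infty$.
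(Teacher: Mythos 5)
Your proof is correct and follows essentially the same route the paper takes: subtract the $r$-independent leading-order connection $\eta_\infty$, apply the radial-integration Poincar\'e lemma (Lemma \ref{lem:almost-hk:def-gauge}) to $\tau = \d\eta - \d\eta_\infty$, and read off decay from the Remark \ref{rem:setup:harmonic-function-higher-derivatives}-style estimates on $*\d h$. You supply more detail than the paper's one-line proof, in particular by integrating inward from infinity (where $\tau$ vanishes, so the base-point hypothesis of Lemma \ref{lem:almost-hk:def-gauge} holds in the limit) and by explaining how the leftover closed 1-form is absorbed into $\eta_\infty$ and a gauge transformation — both of which are the intended, if unstated, steps.
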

\begin{definition}
	\label{def:connection}
	Pick a connection $\eta$ on the principal bundle $P$ that satisfies $* \d h = \d \eta$. Moreover assume that $\eta$ is antisymmetric under the involution $\tilde \tau$. (This enables us to project the Gibbons-Hawking metric to $P/\Z_2$.) Finally, use Lemma \ref{lem:almost-hk:gauge-near-infty} to fix a gauge on $\eta$. From now on this will be the canonical connection on $P$.
\end{definition}

\subsubsection{The collapsing parameter and metric}
\label{sec:setup:scaling-parameter}
Finally, we equip the bulk space $P/\Z_2$ with a \hk metric. For the gluing construction, we also introduce a collapsing parameter $\epsilon \in (0,1)$ in this step.
From the Gibbons-Hawking construction there are two obvious parameters to choose: The constant in the harmonic function $h$ or the global scale of the metric. Although these parameters look independent, they are actually related by a rescaling of the lattice and a translation of the singularities.

Because of this, we pick our collapsing parameter as a combination of them. We choose our metric such that for any point on $B'$, the length of the fibre converges to $2 \pi \epsilon$ as our collapsing parameter $\epsilon$ tends to zero:
% Explicitly, for any $\epsilon > 0$ and a fixed choice for $h$, we define the harmonic function
% $$
% h_\epsilon = 1 + \epsilon \: h
% $$

% Next, we consider the metric that is induced by the Gibbons-Hawking ansatz for the harmonic function $\epsilon^{-1} + h$ and connection $\eta$, and we rescale it by a factor of $\epsilon$. Explicitly, it is given by
% $$
% g^{GH} := h_\epsilon g_B + \frac{\epsilon^2}{h_\epsilon} \eta^2
% $$
% and its K\"ahler forms are
% \begin{align*}
% 	\omega^{GH}_i :=& \epsilon \d x_i \wedge \eta + h_\epsilon *^B \d x_i.
% \end{align*}
% The \hk space $(P/\Z_2, g^{GH}, \omega^{GH}_i)$ will be called the bulk space.

\begin{definition}
	\label{def:setup:gibbons-hawking-with-epsilon}
	Consider $P$, $\tilde \tau$ from Definitions \ref{def:setup:principal-bundle} and \ref{def:setup:involution} and $h$ from Lemma \ref{lem:setup:harmonic-function}. Let $\eta$ be the antisymmetric connection on $P$ given in Definition \ref{def:connection}. For any $\epsilon > 0$ we define the harmonic function 
	$$
	h_\epsilon = 1 + \epsilon h.
	$$
	From now on, the metric that is induced by the Gibbons-Hawking ansatz with the harmonic function $\epsilon^{-1} + h$ and $\eta$, and is rescaled by a factor of $\epsilon$, will be called the Gibbons-Hawking metric and will be denoted as $g^{GH}$. Explicitly, it is given by
	$$
	g^{GH} = h_\epsilon g_B + \frac{\epsilon^2}{h_\epsilon} \eta^2
	$$
	and its K\"ahler forms are
	\begin{align*}
		\omega^{GH}_1 =& \epsilon \d x_1 \wedge \eta + h_\epsilon \d x_2 \wedge \d x_3 \\
		\omega^{GH}_2 =& \epsilon \d x_2 \wedge \eta + h_\epsilon \d x_3 \wedge \d x_1 \\
		\omega^{GH}_3 =& \epsilon \d x_3 \wedge \eta + h_\epsilon \d x_1 \wedge \d x_2.
	\end{align*}
	The \hk space $(P/\Z_2, g^{GH}, \omega^{GH}_i)$ will be called the bulk space.
\end{definition}

\subsection{The interpolation of the K\"ahler forms}
\label{sec:almost-hk}
Following the method by \mycite{Sen1997}, we make the bulk space complete and equip it with an almost \hk metric. To do this, we have to identify the asymptotic regions of the Atiyah-Hitchin manifolds with the neighbourhoods of the fixed 
points $q_j$ and the Taub-NUT spaces to a neighbourhood of the non-fixed points 
$p_i$. Topologically, these neighbourhoods already coincide.
% For example, on the tubular neighbourhood of $P$ near a fixed point singularity $q_j$, the Gibbons-Hawking ansatz yields a circle bundle over $\R \times S^2$ of degree $-4$. On this region the involution $\tilde \tau$ also coincides with the $\Z_2$-action on the the branched double cover of the Atiyah-Hitchin manifold.
% Hence topologically, the Atiyah-Hitchin manifold can be glued into the bulk space near each $q_j$.

We only need to define a global metric.
Instead of interpolating the metrics we will interpolate the K\"ahler forms while keeping them closed. In order to get the correct error estimates, we have to modify the diffeomorphism between the bulk space $P/\Z_2$ and the asymptotic regions of the Atiyah-Hitchin manifolds and Taub-NUT spaces using a suitable gauge transformation. We explain our choice of gauge transformation and we give the interpolated forms explicitly.

\subsubsection{Conformally rescaled metrics}
Before we compare $g^{GH}$ to the model metrics of the Taub-NUT space and the Atiyah-Hitchin manifold, we need to introduce the metric by which we will measure the error. This different metric will play an important role as the standard metric will not work with the inverse function theorem. This is studied in depth in \cite{Salm001}.

The global setup will be given later in Section \ref{sec:weighted-analysis-of-functions}. 
For now it is important that we do not use the standard H\"older norms with respect to some model metric $g_{model}$, but we will consider the standard H\"older norms with respect to a conformally rescaled norm $g_{cf} := \Omega^2 \:g_{model}$. 
The correct choice for the model metric and $\Omega$ is also found in \cite{Salm001}. In our case these are:

\begin{definition}
	\label{def:almost-hk:model-metric-pi}
	Let $p_i$ be a non-fixed singularity and let $r_i$ be the distance to $p_i$ on $B$. Let $\alpha_i \in \R $ be such that, near $p_i$, $h(x) = \alpha_i + \frac{1}{2|x - p_i|} + \O(|x - p_i|)$. For the model metric near $p_i$ define
	\begin{align*}		
		h^{p_i} :=&  \alpha_i + \frac{1}{2r_i},
		&\rho_{p_i} :=& \log r_i,  \\
		h_\epsilon^{p_i} :=& 1 + \epsilon \: h^{p_i},
		& \Omega_{p_i} :=& r_i^{-1} \left(h_\epsilon^{p_i}\right)^{-\frac{1}{2}}.
	\end{align*}
	Let $U^{p_i} \subseteq B'$ be a punctured neighbourhood of $p_i$ homotopic to $S^2$ and let $\eta^{p_i}$ be an $r_i$-invariant connection of $P|_{U^{p_i}}$ satisfying the Bogomolny equation
	$$
	* \d h^{p_i} = \d \eta^{p_i}.
	$$
	Define $g^{p_i}$ to be the Gibbons-Hawking metric induced by $h^{p_i}$ and $\eta^{p_i}$, i.e.
	$$
	g^{p_i} := h^{p_i}_\epsilon g_{U^{p_i}} + \frac{\epsilon^2}{h^{p_i}_\epsilon} (\eta^{p_i})^2.
	$$
	We call the \hk manifold $(P|_{U^{p_i}}, g^{p_i})$  the model space near $p_i$.
	Its K\"ahler forms are
	\begin{align*}
		\omega^{p_i}_1 =& \epsilon \d x_1 \wedge \eta^{p_i} + h_\epsilon^{p_i} \d x_2 \wedge \d x_3 \\
		\omega^{p_i}_2 =& \epsilon \d x_2 \wedge \eta^{p_i} + h_\epsilon^{p_i} \d x_3 \wedge \d x_1 \\
		\omega^{p_i}_3 =& \epsilon \d x_3 \wedge \eta^{p_i} + h_\epsilon^{p_i} \d x_1 \wedge \d x_2.
	\end{align*}
	Also define the conformally rescaled model metric $$g^{p_i}_{cf} := \Omega^2_{p_i} g^{p_i}
	= \d \rho_{p_i}^2 + g_{S^2} + \frac{\epsilon^2}{r_i^2 (h_\epsilon^{p_i})^2} (\eta^{p_i})^2
	.$$
\end{definition}	
\begin{definition}
	\label{def:almost-hk:model-metric-qj}
	Let $q_j$ be a fixed point singularity and let $r_j$ be the distance to $q_j$ on $B$. Let $\alpha_j \in \R$ be such that, near $q_j$, $h(x) = \alpha_j - \frac{2}{|x - q_j|} + \O(|x - q_j|^2)$. For the model metric near $q_j$ define
	\begin{align*}		
		h^{q_j} :=&  \alpha_j - \frac{2}{r_j},
		&\rho_{q_j} :=& \log r_j,  \\
		h_\epsilon^{q_j} :=& 1 + \epsilon \: h^{q_j},
		& \Omega_{q_j} :=& r_j^{-1} \left(h_\epsilon^{q_j}\right)^{-\frac{1}{2}}.
	\end{align*}
	Let $U^{q_j} \subseteq B'$ be a punctured neighbourhood of $q_j$ homotopic to $S^2$ and let $\eta^{q_j}$ be an $r_j$-invariant connection of $P|_{U^{q_j}}$ satisfying the Bogomolny equation
	$$
	* \d h^{q_j} = \d \eta^{q_j}.
	$$
	Define $g^{q_j}$ to be the Gibbons-Hawking metric induced by $h^{q_j}$ and $\eta^{q_j}$, i.e.
	$$
	g^{q_j} := h^{q_j}_\epsilon g_{U^{q_j}} + \frac{\epsilon^2}{ h^{q_j}_\epsilon} (\eta^{q_j})^2.
	$$
	We call the \hk manifold $(P|_{U^{q_j}}, g^{q_j})$ the model space near $q_j$.
	Its K\"ahler forms are
	\begin{align*}
		\omega^{q_j}_1 =& \epsilon \d x_1 \wedge \eta^{q_j} + h_\epsilon^{q_j} \d x_2 \wedge \d x_3 \\
		\omega^{q_j}_2 =& \epsilon \d x_2 \wedge \eta^{q_j} + h_\epsilon^{q_j} \d x_3 \wedge \d x_1 \\
		\omega^{q_j}_3 =& \epsilon \d x_3 \wedge \eta^{q_j} + h_\epsilon^{q_j} \d x_1 \wedge \d x_2.
	\end{align*}
	Also define the conformally rescaled model metric $$g^{q_j}_{cf} := \Omega^2_{q_j} g^{q_j}
	= \d \rho_{q_j}^2 + g_{S^2} + \frac{\epsilon^2}{r_j^2 (h_\epsilon^{q_j})^2} (\eta^{q_j})^2
	.$$
\end{definition}

Notice that the conformal rescaling changes the metric of the base space from the flat metric of $\R^3$ to the cylindrical metric of $\R^+ \times S^2$.
Therefore, when using $g^{p_i}_{cf}$ and $g^{q_j}_{cf}$, higher derivatives will have the same growth/decay rate as the function itself. 
For example, according to Remark \ref{rem:setup:harmonic-function-higher-derivatives}, $\nabla^k (h - h^{q_j}) = \O(r_j^{2-k})$ with respect to the Euclidean metric on the base space. Converting this to the conformal metric, one concludes
	\begin{equation}
		\label{eq:revision-difference-h-and-model-h}
		\nabla^k_{cf}(h - h^{q_j}) \sim  r^k\nabla^k_{g_B} (h - h^{q_j}) = \O(r_j^2).
	\end{equation}
	Therefore, higher order estimates follow automatically from the $C^0$ estimate and we can omit them in our calculations.

\subsubsection{Error estimates}
We want to glue in rescaled copies of the Taub-NUT (TN) metric and the Atiyah-Hitchin (AH) manifold to the non-complete bulk space $P/\Z_2$. 
Even more, we want to find some $0< R_0 < R_1 \ll 1$ and some scaling factors, so that we can define a definite triple of the form
\begin{align*}
	\omega =& \left\{\begin{matrix}
		\text{scaling factor} \cdot \omega^{TN}  & \textit{if}&& \|x - p_i\|_{g^B} &\le R_0\\
		\text{scaling factor} \cdot \omega^{AH} &\textit{if}&& \|x - q_j\|_{g^B} &\le R_0\\
		%%%%%%%%%%%%%%%%%%%%%%
		\omega^{p_i}+ \d \left(\text{interpolation term}
		\right) &\textit{if}& R_0 \le& \|x - p_i\|_{g^B} &\le R_1 \\
		%%%%%%%%%%%%%%%%%%%%%%
		\omega^{q_j}+ \d \left(\text{interpolation term}
		\right) &\textit{if}& R_0 \le& \|x - q_j\|_{g^B} &\le R_1 \\
		%%%%%%%%%%%%%%%%%%%%%%%%%
		\omega^{GH} &&  & \text{otherwise}
	\end{matrix}\right.
\end{align*}
on this global space.
To do this, we first need to ask the following questions:
\begin{enumerate}
	\item By what must we rescale the Taub-NUT and the Atiyah-Hitchin metric,
	\item How much do the K\"ahler forms differ between these rescaled spaces and the model spaces defined in Definition \ref{def:almost-hk:model-metric-pi} and Definition \ref{def:almost-hk:model-metric-qj}, and
	\item How much do the K\"ahler forms differ between the bulk space $P/\Z_2$ and the model spaces?
\end{enumerate}
We start with question 3. The difference between the K\"ahler forms of the bulk space $(P/\Z_2, g^{GH}, \omega^{GH}_i)$ and the model metrics are exactly
\begin{align*}
	\omega^{GH}_i - \omega^{p_i} =& \epsilon \d x_i \wedge (\eta - \eta^{p_i}) + (h_\epsilon - h^{p_i}_\epsilon) \d x_j \wedge \d x_k, \\
	\omega^{GH}_i - \omega^{q_j} =& \epsilon \d x_i \wedge (\eta - \eta^{q_j}) + (h_\epsilon - h^{q_j}_\epsilon) \d x_j \wedge \d x_k.
\end{align*}
According to Equation \eqref{eq:revision-difference-h-and-model-h}, the difference between $h$ and $h^{q_j}$ is $\O(r_j^2)$ w.r.t. $g^{q_j}_{cf}$. By the definition of $h_\epsilon$, the difference $h_\epsilon-h^{q_j}_\epsilon$ is $\O(\epsilon r_j^2)$.
Therefore, we only need to estimate the difference between the connection $\eta$ and $\eta^{q_j}$. For this we revisit Lemma \ref{lem:almost-hk:gauge-near-infty}. To estimate the closed $2$-form $\d (\eta - \eta^{q_j}) = *^B \d (h - h^{q_j})$, we use Equation \eqref{eq:revision-difference-h-and-model-h} to get $h - h^{q_j} = \O(r_j^2)$ w.r.t. $g^{q_j}_{cf}$. The exterior derivative is a uniformly bounded operator, and so $\d (h - h^{q_j}) = \O(r_j^2)$. The Hodge dual $*^B$ introduces an extra factor $r_j$ when comparing it to $g^{q_j}_{cf}$, and so $\d (\eta - \eta^{q_j}) = \O(r_j^3)$ w.r.t. $g^{q_j}_{cf}$.
By Remark \ref{rem:setup:harmonic-function-higher-derivatives} the same can be said about its derivatives.

Finally, we integrate $\d (\eta - \eta^{q_j})$ from $r_0 = 0$ using Lemma \ref{lem:almost-hk:def-gauge}. 
This integration does not introduce new factors $r_j$. Indeed, the integrand in this Lemma is $\iota_{\del_r} \d (\eta - \eta^{q_j}) = \O(r_j^2)$, because $\del_r$ is $\O(r_j^{-1})$ w.r.t. $g_{cf}^{q_j}$. At the same time, integration along $r_j$ reintroduces a factor $r_j$. In the end these factors cancel out.

Summarizing, we have a $1$-form $\tilde \eta^{q_j}$ such that $\d \eta = \d \eta^{q_j} + \d \tilde \eta^{q_j}$.
Because $H^1(S^2) = 0$, the form $\eta - \eta^{q_j} - \tilde \eta^{q_j}$ is exact and hence we have:
\begin{lemma}
	\label{lem:almost-hk:gauge-near-q}
	On a small annulus around each fixed point singularity $q_j$, there exists a gauge transformation which identifies $\eta$ with $\eta^{q_j} + \tilde \eta^{q_j}$,
	where $\tilde \eta^{q_j}$ and all its derivatives are of order $r_j^3$ with respect to $ g^{q_j}_{cf}$.

	Similarly, on a small annulus around each non-fixed singularity $p_i$, there exists a gauge transformation which identifies $\eta$ with $\eta^{p_i} + \tilde \eta^{p_i}$,
	where $\tilde \eta^{p_i}$ and all its derivatives are of order $r_i^2$ with respect to $ g^{p_i}_{cf}$.
\end{lemma}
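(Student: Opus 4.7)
The plan is to apply the Poincar\'e-type radial integration of Lemma \ref{lem:almost-hk:def-gauge} to the exact 2-form $\d(\eta - \eta^{q_j})$, extract a primitive $\tilde\eta^{q_j}$ with controlled decay, and then absorb the residual closed 1-form into a gauge transformation via $H^1(S^2) = 0$. Note first that on a punctured neighbourhood of $q_j$, both $\eta$ and $\eta^{q_j}$ are connections on the same bundle $P$ and satisfy the Bogomolny equations for $h$ and $h^{q_j}$ respectively, so their difference $\alpha := \eta - \eta^{q_j}$ is a globally defined 1-form on the annulus with
\[\d\alpha = *^B \d(h - h^{q_j}),\]
which in particular is exact.

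Next I would track the size of this right-hand side in the conformal metric $g^{q_j}_{cf}$. Lemma \ref{lem:setup:harmonic-function}(b) gives $h - h^{q_j} = O(r_j^2)$ near $q_j$, and Remark \ref{rem:setup:harmonic-function-higher-derivatives} controls all $g_B$-derivatives. Remark \ref{rem:higher-derivatives} then converts these estimates to $g^{q_j}_{cf}$: since the base factor of the conformal metric equals $r_j^{-2} g_B$, a $1$-form on the base gains a factor $r_j$ and a $2$-form a factor $r_j^2$. Combined with $|\d(h - h^{q_j})|_{g_B} = O(r_j)$, this yields $|*^B \d(h - h^{q_j})|_{g^{q_j}_{cf}} = O(r_j^3)$, and the same bound holds for all conformal covariant derivatives.

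Then I would apply Lemma \ref{lem:almost-hk:def-gauge} in the coordinate $\rho_j = \log r_j$, which exhibits the neighbourhood as a half-cylinder over $S^2$. Since $\d\alpha$ decays like $e^{3\rho_j}$ as $\rho_j \to -\infty$, the radial integral
\[\tilde\eta^{q_j} := \int_{-\infty}^{\rho_j} \iota_{\del_\rho}\bigl(*^B \d(h - h^{q_j})\bigr)\, \d\sigma\]
converges absolutely and satisfies $\d\tilde\eta^{q_j} = \d\alpha$. Integration in $\rho_j$ preserves the order, so $\tilde\eta^{q_j}$ and all of its $g^{q_j}_{cf}$-derivatives are $O(r_j^3)$. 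The 1-form $\alpha - \tilde\eta^{q_j}$ is then closed on an annulus homotopy equivalent to $S^2$, and because $H^1(S^2) = 0$ it is exact, $\alpha - \tilde\eta^{q_j} = \d f$; the gauge transformation $e^{if}$ on $P$ sends $\eta$ to $\eta - \d f = \eta^{q_j} + \tilde\eta^{q_j}$, as required. The argument at each non-fixed singularity $p_i$ is entirely parallel, using $h^{p_i}$, $g^{p_i}_{cf}$, and the expansion of $h$ at $p_i$ from Lemma \ref{lem:setup:harmonic-function}(b) to obtain the stated $r_i^2$ bound on $\tilde\eta^{p_i}$.

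The main technical obstacle is the bookkeeping between $g_B$ and the conformal metrics $g^{q_j}_{cf}$ and $g^{p_i}_{cf}$: the Hodge star is taken in $g_B$ while the norms in the conclusion are measured in the conformal metrics, and the scaling factors for $1$- and $2$-forms differ, so one must apply Remark \ref{rem:higher-derivatives} and the base/fibre decomposition of the conformal metric carefully to land on the announced exponents uniformly in $\epsilon$.
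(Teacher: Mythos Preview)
Your proof is correct and follows essentially the same approach as the paper: estimate $\d(\eta-\eta^{q_j})=*^B\d(h-h^{q_j})$ as $O(r_j^3)$ in $g^{q_j}_{cf}$ via the expansion in Lemma~\ref{lem:setup:harmonic-function}(b) and Remark~\ref{rem:higher-derivatives}, apply the radial integration of Lemma~\ref{lem:almost-hk:def-gauge} from $r_j=0$ to obtain $\tilde\eta^{q_j}$, and use $H^1(S^2)=0$ to absorb the closed remainder into a gauge transformation. Your additional bookkeeping of the conformal scaling factors for $1$- and $2$-forms is a useful elaboration of what the paper leaves implicit.
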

Using the estimates from Lemmas \ref{lem:setup:harmonic-function} and Lemma \ref{lem:almost-hk:gauge-near-q} the difference between the K\"ahler forms of $g^{GH}$ and $g^{q_j}$ is given by $\epsilon \d x_i \wedge \O(r_j^3) + \O(\epsilon r_j^2) \d x_j \wedge \d x_k$.  Because $r^{-1} \d x_i$ and its derivatives are uniformly bounded in $g^{q_j}_{cf}$, 
$$
\| \nabla^k (\omega^{GH} - \omega^{q_j}) \|_{g^{q_j}_{cf}} = \O(\epsilon r_j^{4})
$$ for all $k \ge 0$. Similarly, one can estimate $
\| \nabla^k (\omega^{GH} - \omega^{p_i}) \|_{g^{p_i}_{cf}} = \O(\epsilon r_i^{3})
$ near a non-fixed singularity $p_i$. This enables us to apply the radial integration from Lemma \ref{lem:almost-hk:def-gauge} again to find:
\raggedbottom
\begin{lemma}
	\label{lem:almost-hk:diff-kahler-GH-M}
	On a small annulus around the non-fixed singularity $p_i$, there exists a smooth triple of $1$-forms, which we denote by $\sigma^{p_i, GH}$, such that 
	$$
	\omega^{GH} = \omega^{p_i} + \d \sigma^{p_i, GH}.
	$$
	The $1$-forms $\sigma^{p_i, GH}$ and all their derivatives are of order $\O(\epsilon r_j^{3})$ with respect to $g^{p_i}_{cf}$.
	
	On a small annulus around the fixed point singularity $q_j$, there exists a smooth triple of $1$-forms, which we denote by $\sigma^{q_j, GH}$, such that 
	$$
	\omega^{GH} = \omega^{q_j} + \d \sigma^{q_j, GH}.
	$$
	The $1$-forms $\sigma^{q_j, GH}$ and all its derivatives are of order $\O(\epsilon r_j^{4})$ with respect to $g^{q_j}_{cf}$.
\end{lemma}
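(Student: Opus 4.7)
The strategy is the one sketched just before the lemma: write the difference $\omega^{GH} - \omega^{\bullet}$ explicitly, verify its decay in the conformal metric, and recover a primitive by radial integration via Lemma \ref{lem:almost-hk:def-gauge}. I will treat the fixed-point case $q_j$ in detail; the non-fixed case $p_i$ is identical modulo a shift of exponents. Using the gauge from Lemma \ref{lem:almost-hk:gauge-near-q} to identify $\eta$ with $\eta^{q_j} + \tilde \eta^{q_j}$ on the annulus, the explicit formulas for $\omega^{GH}_i$ and $\omega^{q_j}_i$ yield
$$\omega^{GH}_i - \omega^{q_j}_i \;=\; \epsilon \, \d x_i \wedge \tilde \eta^{q_j} \,+\, \epsilon (h - h^{q_j})\, *^B \!\d x_i.$$
This is closed (both sides are Kähler forms of Gibbons-Hawking \hk metrics; equivalently, the contributions are coupled via the Bogomolny equation). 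Combining $\tilde \eta^{q_j} = \O(r_j^3)$ from Lemma \ref{lem:almost-hk:gauge-near-q} with $h - h^{q_j} = \O(r_j^2)$ from Lemma \ref{lem:setup:harmonic-function}(b), and noting that $r_j^{-1}\d x_i$ is uniformly bounded in $g^{q_j}_{cf}$, one recovers the estimate $\|\omega^{GH} - \omega^{q_j}\|_{g^{q_j}_{cf}} = \O(\epsilon\, r_j^4)$ stated in the text.

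To produce the primitive, I apply Lemma \ref{lem:almost-hk:def-gauge} on the annulus parametrized by the conformal radial coordinate $\rho_{q_j} = \log r_j$, with base point pushed to $r_0 = -\infty$. Because $\iota_{\partial_{\rho}}(\omega^{GH} - \omega^{q_j})$ has $g^{q_j}_{cf}$-norm of order $\epsilon\, e^{4 \rho}$, the improper integral
$$\sigma^{q_j, GH} \;=\; \int_{-\infty}^{\rho_{q_j}} \iota_{\partial_{\rho}}\!\left(\omega^{GH} - \omega^{q_j}\right) \d \rho$$
converges absolutely, defines a smooth triple of 1-forms on the annulus, and, by a direct estimate on the integrand, satisfies $\|\sigma^{q_j, GH}\|_{g^{q_j}_{cf}} = \O(\epsilon\, r_j^4)$. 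The identity $\d \sigma^{q_j, GH} = \omega^{GH} - \omega^{q_j}$ follows from the Cartan-formula computation underlying Lemma \ref{lem:almost-hk:def-gauge} together with the vanishing of $\omega^{GH} - \omega^{q_j}$ as $\rho \to -\infty$. The bounds on higher derivatives require no extra work: by Remark \ref{rem:higher-derivatives} the conformal covariant derivative preserves the decay rate of the integrand, and since $g^{q_j}_{cf}$ has uniformly bounded geometry one may interchange $\nabla_{cf}$ with the radial integration to conclude $\|\nabla^k \sigma^{q_j, GH}\|_{g^{q_j}_{cf}} = \O(\epsilon\, r_j^4)$ for every $k$.

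For the non-fixed singularity $p_i$ the argument is identical with shifted exponents: Lemma \ref{lem:almost-hk:gauge-near-q} gives $\tilde \eta^{p_i} = \O(r_i^2)$ and Lemma \ref{lem:setup:harmonic-function}(b) gives $h - h^{p_i} = \O(r_i)$, so the difference $\omega^{GH} - \omega^{p_i}$ has $g^{p_i}_{cf}$-norm $\O(\epsilon\, r_i^3)$, and radial integration of $\epsilon\, e^{3\rho}$ produces $\sigma^{p_i, GH}$ with $\|\sigma^{p_i, GH}\|_{g^{p_i}_{cf}} = \O(\epsilon\, r_i^3)$. The only step that is not purely mechanical is the justification of the $r_0 \to -\infty$ limit in Lemma \ref{lem:almost-hk:def-gauge}; this is handled by absolute convergence and dominated convergence using the exponential decay of the integrand in $\rho$, and is the single place in the proof where one must actually check something beyond bookkeeping.
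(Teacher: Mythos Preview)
Your proof is correct and follows essentially the same approach as the paper: the paper's argument (contained in the paragraphs preceding the lemma) computes $\omega^{GH}-\omega^{q_j}$ in the gauge of Lemma \ref{lem:almost-hk:gauge-near-q}, reads off the $\O(\epsilon r_j^4)$ estimate from Lemma \ref{lem:setup:harmonic-function}(b) and Lemma \ref{lem:almost-hk:gauge-near-q}, and then invokes the radial integration of Lemma \ref{lem:almost-hk:def-gauge} from $r_0=0$ to produce the primitive. Your write-up supplies slightly more detail on the improper-integral justification and on passing derivatives under the integral, but the strategy and the estimates are identical to the paper's.
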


Having answered question 3, we now answer question 1 and 2 for a fixed point singularity $q_j$.
According to \mycite{Atiyah1988}, the Atiyah-Hitchin metric has a radial parameter $r_{AH}$ and for large values of $r_{AH}$ the metric on the branched double cover is
\begin{equation}
	\label{eq:metric-AH}
	g^{AH} = \left(1 - \frac{2}{r_{AH}}\right) (\d r^2_{AH} + r^2_{AH} g_{S^2}) + \left(1 - \frac{2}{r_{AH}}\right)^{-1} (\eta^{q_j})^2 + \O(e^{- r_{AH}}).
\end{equation}
By identifying $r_j := \frac{\epsilon}{1 + \epsilon \alpha_j} r_{AH}$, where $\alpha_j$ is defined in Lemma \ref{lem:setup:harmonic-function} and applying the radial integration from Lemma \ref{lem:almost-hk:def-gauge}, one can show
\begin{lemma}
	\label{lem:almost-hk:diff-kahler-AH-M}
	On the asymptotic region of the Atiyah-Hitchin manifold, there exists a triple of $1$-forms $\sigma^{q_j, AH}$ such that 
	$$
	\frac{\epsilon^2}{1 + \epsilon \alpha_j}\omega^{AH} = \omega^{q_j} + \d \sigma^{q_j, AH}
	$$
	and $\sigma^{q_j, AH}$ and all its derivatives are $\O(r_j^2 e^{- \frac{1 + \epsilon \alpha}{\epsilon}r_j} )$ with respect to $g^{q_j}_{cf}$.
\end{lemma}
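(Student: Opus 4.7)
The strategy mirrors the one used in Lemma \ref{lem:almost-hk:diff-kahler-GH-M}: rewrite both triples of Kähler forms in a common coordinate system, check that their difference is closed and exponentially small, and then apply the radial integration from Lemma \ref{lem:almost-hk:def-gauge} starting at $r_0 = \infty$ to obtain the explicit primitive $\sigma^{q_j,AH}$.

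First I would rewrite the asymptotic Atiyah--Hitchin expansion in the coordinate $r_j = \frac{\epsilon}{1+\epsilon\alpha_j}r_{AH}$. A direct substitution gives
$$
\tfrac{\epsilon^2}{1+\epsilon\alpha_j}\bigl[(1-2/r_{AH})(\d r_{AH}^2 + r_{AH}^2 g_{S^2})\bigr]
= h^{q_j}_\epsilon\bigl(\d r_j^2 + r_j^2 g_{S^2}\bigr)
$$
and, analogously,
$$
\tfrac{\epsilon^2}{1+\epsilon\alpha_j}(1-2/r_{AH})^{-1}(\eta^{q_j})^2 = \tfrac{\epsilon^2}{h^{q_j}_\epsilon}(\eta^{q_j})^2,
$$
so that $\tfrac{\epsilon^2}{1+\epsilon\alpha_j}g^{AH}$ equals $g^{q_j}$ plus an error whose pointwise norm in $g^{q_j}$ is $O(e^{-r_{AH}}) = O(e^{-(1+\epsilon\alpha_j)r_j/\epsilon})$. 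The identification of Kähler forms proceeds identically, since in the branched double cover the Atiyah--Hitchin hyperkähler triple admits the Gibbons--Hawking expansion with harmonic function $1-2/r_{AH}$ and the same connection form $\eta^{q_j}$, up to an exponentially small closed correction (which is the $\O(e^{-r_{AH}})$ term in the Atiyah--Hitchin expansion, differentiated). Consequently the difference $\tfrac{\epsilon^2}{1+\epsilon\alpha_j}\omega^{AH}-\omega^{q_j}$ is a closed triple of 2-forms whose norm in $g^{q_j}_{cf}$ is $O(e^{-(1+\epsilon\alpha_j)r_j/\epsilon})$, with the same estimate for all covariant derivatives by the uniform bounded geometry of $g^{q_j}_{cf}$ (see Remark \ref{rem:higher-derivatives}).

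Next I would apply Lemma \ref{lem:almost-hk:def-gauge} with base point $r_0=\infty$, i.e.\ define
$$
\sigma^{q_j,AH} := -\int_{r_j}^{\infty}\iota_{\partial_{r_j}}\!\Bigl(\tfrac{\epsilon^2}{1+\epsilon\alpha_j}\omega^{AH}-\omega^{q_j}\Bigr)\,\d s,
$$
which converges because the integrand decays exponentially, and satisfies $\d\sigma^{q_j,AH}=\tfrac{\epsilon^2}{1+\epsilon\alpha_j}\omega^{AH}-\omega^{q_j}$. Since $H^1(S^2)=0$, the freedom to add a closed 1-form on the cross-section does not affect the estimate. The pointwise bound on $\sigma^{q_j,AH}$ follows by integrating the exponential: the integrand has size $O(e^{-(1+\epsilon\alpha_j)s/\epsilon})$ in $g^{q_j}_{cf}$ (after accounting for $|\partial_{r_j}|_{g^{q_j}_{cf}} = 1$), and the integration gains an overall factor $\tfrac{\epsilon}{1+\epsilon\alpha_j}$; the $r_j^2$ prefactor in the target estimate appears when the intrinsic 2-form norm in $g^{q_j}_{cf}$ is rewritten as a 1-form norm after contraction with $\partial_{r_j}$, combined with the fact that the spatial directions $\d x_j\wedge\d x_k$ pick up an $r_j^2$ factor in the conformal rescaling $\Omega_{q_j}^2 = r_j^{-2}(h^{q_j}_\epsilon)^{-1}$. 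Higher derivative estimates follow from Remark \ref{rem:higher-derivatives}, because $g^{q_j}_{cf}$ has uniformly bounded geometry on local universal covers.

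The main obstacle, as in Lemma \ref{lem:almost-hk:diff-kahler-GH-M}, is purely bookkeeping: one must correctly track the three distinct rescalings --- the overall $\epsilon^2/(1+\epsilon\alpha_j)$ scaling of the Atiyah--Hitchin metric, the coordinate change $r_{AH}\mapsto r_j$, and the conformal rescaling $g^{q_j}\mapsto g^{q_j}_{cf}$ --- so that the final decay is expressed precisely as $O(r_j^2 e^{-(1+\epsilon\alpha)r_j/\epsilon})$ in $g^{q_j}_{cf}$. Once this is done, the construction and the estimates on $\sigma^{q_j,AH}$ go through as in the previous lemma.
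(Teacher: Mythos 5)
Your overall strategy matches the paper's (very brief) description of the proof: rewrite $g^{AH}$ in the coordinate $r_j := \tfrac{\epsilon}{1+\epsilon\alpha_j}r_{AH}$, observe that the rescaled metric and triple differ from $(g^{q_j},\omega^{q_j})$ by an exponentially decaying error, and integrate radially via Lemma~\ref{lem:almost-hk:def-gauge} from $r_0 = \infty$. Your algebraic verification of the exact match of the leading terms under the substitution is correct, and so is the conclusion.

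However, the bookkeeping in your decay estimate is internally inconsistent and contains two errors. First, $|\partial_{r_j}|_{g^{q_j}_{cf}} \ne 1$; since $g^{q_j}_{cf}$ has $\d\rho_{q_j}^2 = r_j^{-2}\d r_j^2$ in the radial slot with $\rho_{q_j}=\log r_j$, one has $|\partial_{r_j}|_{g^{q_j}_{cf}} = r_j^{-1}$ (equivalently, one should contract with $\partial_{\rho_{q_j}}$, which is the unit vector). Second, the difference of Kähler triples already carries the $r_j^2$ factor when measured in $g^{q_j}_{cf}$, since the joint rescaling of $\omega^{AH}$ and $g^{AH}$ by $\tfrac{\epsilon^2}{1+\epsilon\alpha_j}$ preserves 2-form norms, giving $\|\tfrac{\epsilon^2}{1+\epsilon\alpha_j}\omega^{AH}-\omega^{q_j}\|_{g^{q_j}} = \O(e^{-r_{AH}})$, and then the conformal factor contributes $\Omega_{q_j}^{-2} = r_j^2 h^{q_j}_\epsilon$, so $\|\tfrac{\epsilon^2}{1+\epsilon\alpha_j}\omega^{AH}-\omega^{q_j}\|_{g^{q_j}_{cf}} = \O(r_j^2 e^{-(1+\epsilon\alpha_j)r_j/\epsilon})$ — not $\O(e^{-(1+\epsilon\alpha_j)r_j/\epsilon})$ as you first write. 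With the integration then carried out in the $\rho_{q_j}$ variable, one obtains a bound of order $\O(\epsilon r_j e^{-(1+\epsilon\alpha_j)r_j/\epsilon})$, which is stronger than, and hence implies, the stated $\O(r_j^2 e^{-(1+\epsilon\alpha_j)r_j/\epsilon})$ once $r_j \gtrsim \epsilon$. None of this changes the validity of your conclusion or the outline, but your intermediate accounting of where the $r_j^2$ comes from should be fixed.
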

\begin{remark}
	For simplicity we assume that $\sigma^{q_j, AH}$ all its derivatives are $\O(\epsilon^3 r_j^{-1})$ with respect to $g^{q_j}_{cf}$.
\end{remark}
This lemma answers question 1 and 2 for the Atiyah-Hitchin metric: Namely, we need to rescale the Atiyah-Hitchin manifolds with a factor of $\frac{\epsilon^2}{1 + \epsilon \alpha_j}$. The error between the rescaled Atiyah-Hitchin manifold and the model metrics will exponentially decay in $r_j$.

In a similar manner we can compare the K\"ahler forms for the model metric near $p_i$ with a rescaled version of a fixed Taub-NUT space. In this case, there is no exponentially decaying error term and we get the result:
\begin{lemma}
	\label{lem:almost-hk:diff-kahler-TN-M}
	By identifying $r_i := \frac{\epsilon}{1 + \epsilon \alpha_i} r_{TN}$, where $\alpha_i$ is defined in Lemma \ref{lem:setup:harmonic-function},
	\begin{align*}
		g^{p_i} = \frac{\epsilon^2}{1 + \epsilon \alpha_i} g^{TN}, \text{ and }
		\omega^{p_i} = \frac{\epsilon^2}{1 + \epsilon \alpha_i} \omega^{TN},
	\end{align*}
	where $g^{TN}$ is the fixed Taub-NUT space
	$$
	g^{TN} := \left(1 + \frac{1}{2 r_{TN}}\right) (\d r_{TN}^2 + r_{TN}^2 g_{S^2}) + \frac{1}{1 + \frac{1}{2 r_{TN}}} (\eta^{p_i})^2.
	$$
\end{lemma}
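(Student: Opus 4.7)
The plan is a direct verification of both identities by substitution. The key algebraic observation is that under $r_i = \mu\, r_{TN}$ with $\mu := \frac{\epsilon}{1+\epsilon\alpha_i}$, the harmonic functions satisfy $h^{p_i}_\epsilon = (1+\epsilon\alpha_i)\, h^{TN}$ where $h^{TN} = 1+\tfrac{1}{2r_{TN}}$; this is a one-line check from the formula for $h^{p_i}$ together with the definition of $h^{p_i}_\epsilon$. The entire lemma reduces to this multiplicative identity combined with the rescaling of the flat base.

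First I would note that since $\d\eta^{p_i} = *_B \d h^{p_i}$ is a constant multiple of $\Vol_{S^2}$, hence radially independent in $r_i$ and thus also in $r_{TN}$, the same $r_i$-invariant 1-form $\eta^{p_i}$ serves as a connection for $h^{TN}$ as well, so no gauge change is required under the reparametrisation.

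For the metric identity, I would substitute $r_i = \mu r_{TN}$ into $g^{p_i} = h^{p_i}_\epsilon g_{U^{p_i}} + \tfrac{\epsilon^2}{h^{p_i}_\epsilon}(\eta^{p_i})^2$, using that $g_{U^{p_i}} = \d r_i^2 + r_i^2 g_{S^2}$ picks up a factor $\mu^2$ in $r_{TN}$-coordinates while $h^{p_i}_\epsilon$ picks up the factor $(1+\epsilon\alpha_i)$. The horizontal part then carries prefactor $\tfrac{\epsilon^2}{1+\epsilon\alpha_i}\, h^{TN}$ and the vertical part carries $\tfrac{\epsilon^2}{(1+\epsilon\alpha_i) h^{TN}}$, and factoring out the common constant $\tfrac{\epsilon^2}{1+\epsilon\alpha_i}$ reproduces $g^{TN}$.

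For the K\"ahler forms, the same reparametrisation induces $\d x_j^{(i)} = \mu\, \d x_j^{(TN)}$, and, since the Hodge star of a 1-form in three dimensions scales by one factor of $\mu$ under $g \mapsto \mu^2 g$, we obtain $*_{B_i}\d x_j^{(i)} = \mu^2 *_{B_{TN}}\d x_j^{(TN)}$. Substituting into $\omega^{p_i}_j = \epsilon\, \d x_j^{(i)}\wedge \eta^{p_i} + h^{p_i}_\epsilon *_{B_i}\d x_j^{(i)}$ makes both terms carry the common prefactor $\tfrac{\epsilon^2}{1+\epsilon\alpha_i}$, giving the claim. There is no genuine obstacle: the argument is pure bookkeeping, with the only mild subtlety being the Hodge-star scaling between the two base metrics in the K\"ahler form computation.
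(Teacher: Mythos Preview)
Your proposal is correct and is precisely the direct substitution the paper has in mind; the paper itself omits the proof, treating the lemma as an immediate consequence of the preceding Atiyah--Hitchin comparison (``In a similar manner \ldots''), and your computation fills in exactly those details. The one-line identity $h^{p_i}_\epsilon = (1+\epsilon\alpha_i)\,h^{TN}$ under $r_i = \mu\, r_{TN}$ is indeed the crux, and your handling of the Hodge-star scaling for the K\"ahler forms is the only place where care is required.
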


\subsubsection{The complete manifold}
With these ingredients we can finally construct a complete manifold and equip it with a definite triple that is almost \hk. Near the fixed point singularities we complete the bulk space using Atiyah-Hitchin manifolds. Near the non-fixed points, we glue in Taub-NUT spaces\footnote{Alternatively, one can complete the region near the non-fixed points in a similar way as the Taub-NUT space is completed by adding some extra points. To unify the gluing procedure for the Atiyah-Hitchin manifold and the Taub-NUT space we prefer to use the first method.}. With this setup, we define the complete 4-dimensional manifold underlying our gravitational instantons.
\begin{definition}
	Let $n$ be the number of non-fixed points $p_i$ and $m$ be the number of fixed point singularities $q_j$, defined in Definition \ref{def:setup:non-fixed-points} resp. \ref{def:setup:basespace}. Let $P/\Z_2$ be the bulk space. Identify the asymptotic region of the Atiyah-Hitchin manifold and the neighbourhoods of $q_j$ on $P/\Z_2$ with the $\Z_2$ quotient of the model space defined in Definition \ref{def:almost-hk:model-metric-qj}. Similarly,
	identify the asymptotic region of the Taub-NUT space and the neighbourhoods of $p_i$ on $P/\Z_2$ with the the model space defined in Definition \ref{def:almost-hk:model-metric-pi}.
	Consider the connected sum of $P/\Z_2$ with $m$ copies of the Atiyah-Hitchin manifold and $n$ copies of the Taub-NUT space. We call this space the global space and we denote it as $M_{B,n}$.
\end{definition}

In order to equip $M_{B,n}$ with a definite triple, let $\epsilon \in (0,1)$, $R_0, R_1 \in (0, \infty)$ be small. Assume that the gluing in the connected sum construction happens on the region $ \bigcup_{i} B_{R_1}(p_i) \backslash B_{R_0}(p_i)$ and $ \bigcup_{j} B_{R_1}(q_j) \backslash B_{R_0}(q_j)$.
For each point $p_i$ and $q_j$, pick a family of smooth step functions $\chi_\epsilon(x)$ on $B$ such that $\chi_\epsilon(x) = 0$ when $\|x - p_i\|_{g^B}, \|x - q_j\|_{g^B} \le R_0$ and $\chi_\epsilon(x) = 1$ when $\|x - p_i\|_{g^B}, \|x - q_j\|_{g^B} \ge R_1$. We pick the following triple on the connected sum:
\begin{align*}
	\omega =& \left\{\begin{matrix}
		\frac{\epsilon^2}{1 + \epsilon \alpha_i}\omega^{TN}  & \textit{if}&& \|x - p_i\|_{g^B} &\le R_0\\
		\frac{\epsilon^2}{1 + \epsilon \alpha_j}\omega^{AH} &\textit{if}&& \|x - q_j\|_{g^B} &\le R_0\\
		%%%%%%%%%%%%%%%%%%%%%%
		\omega^{p_i}+ \d \left(
		\chi_\epsilon \sigma^{p_i, GH}
		\right) &\textit{if}& R_0 \le& \|x - p_i\|_{g^B} &\le R_1 \\
		%%%%%%%%%%%%%%%%%%%%%%
		\omega^{q_j}+ \d \left[
		(1- \chi_\epsilon) \sigma^{q_j, AH}
		+
		\chi_\epsilon \sigma^{q_j, GH}
		\right] &\textit{if}& R_0 \le& \|x - q_j\|_{g^B} &\le R_1 \\
		%%%%%%%%%%%%%%%%%%%%%%%%%
		\omega_i^{GH} &&  & \text{otherwise.}
	\end{matrix}\right.
\end{align*}
We need to find $\chi_\epsilon$, $R_0$ and $R_1$ such that $\omega_i$ is \hk outside $r \in [R_0, R_1]$ and behaves well enough inside. Assume that $R_0 = C_0 \epsilon^{\kappa}$ and $	R_1 = C_1 \epsilon^{\kappa} $
for some $C_0, C_1 > 0$, $\kappa \in \R$. We need to balance the following factors:
\begin{itemize}
	\item For the approximations of $\sigma^{p_i, GH}$ and $\sigma^{q_j, GH}$ we need the radial distance to the singularity to be small. This is satisfied when $\kappa > 0$.
	\item At the same time we need that $h_\epsilon > 0$ and so $r_j$ cannot be too small. This is satisfied when $C_0 = 4$ and $\kappa < 1$, because Lemma \ref{lem:setup:harmonic-function} implies $h_\epsilon > 0$ if $4 \epsilon^{\kappa} > 4 \epsilon$.
	\item  For the approximation of $\sigma^{q_j, AH}$ we need $r_{AH}$ to be large. Combining $r_j = \O(\epsilon^{\kappa})$ and $r_{AH} = \frac{1 + \epsilon \alpha_j}{\epsilon} r_j$, it follows $r_{AH} = \O(\epsilon^{\kappa - 1})$. This is large when $\kappa < 1$.
	\item Finally we need that $R_0 < R_1$. This happens when $C_0 < C_1$.
\end{itemize}
From Lemma \ref{lem:almost-hk:diff-kahler-GH-M} and \ref{lem:almost-hk:diff-kahler-AH-M}, we have decay estimates for $\sigma^{p_i, GH}$, $\sigma^{q_j, GH}$ and $\sigma^{q_j, AH}$. It is sufficient if we assume $\sigma^{p_i, GH},\sigma^{q_j, GH} = \O(\epsilon r_j^3)$ and $\sigma^{q_j, AH} = \O(\epsilon^3 r_j^{-1})$. 
When we pick
$$
R_0 = 4 \epsilon^{\frac{2}{5}} \text{ and } R_1 = 5 \epsilon^{\frac{2}{5}}
$$
all the above requirements are satisfied.
By estimating $\chi_\epsilon$, one notices that $\d \chi_\epsilon = \O(1)$ and may conclude:

\begin{theorem}
	\label{thm:almost-hk:global-symplectic-triple}
	There exists an $\epsilon_1 > 0$ such that for all $0 < \epsilon < \epsilon_1$:
	\begin{enumerate}
		\item $\omega_i$ is a closed $2$-form in $M_{B,n}$.
		\item Outside the gluing region (i.e. $r_i \in [4\epsilon^{\frac{2}{5}}, 5 \epsilon^{\frac{2}{5}}]$ or $r_j \in [4\epsilon^{\frac{2}{5}}, 5 \epsilon^{\frac{2}{5}}]$),
		$\omega_i$ is a \hk triple.
		\item Inside the gluing region near the fixed point $q_j$, $\omega_i - \omega_i^{q_j}$ and all its derivatives are of order $\O(\epsilon^3 r_j^{-1}) + \O(\epsilon r_j^3)$ w.r.t. $g_{cf}^{q_j}$. In particular, inside this gluing region $\omega_i$ is a definite triple of closed $2$-forms such that
		$$
		\frac{1}{2} \omega_i \wedge \omega_j = \left( \operatorname{Id} + \O(\epsilon^{7/5})\right)_{ij} \otimes \Vol^{g^{q_j}}.
		$$
		\item Inside the gluing region near the non-fixed point $p_i$, $\omega_i - \omega_i^{p_i}$ and all its derivatives are of order $\O(\epsilon r_j^3)$ w.r.t. $g_{cf}^{p_i}$. In particular, inside this gluing region $\omega_i$ is a definite triple of closed $2$-forms such that
		$$
		\frac{1}{2} \omega_i \wedge \omega_j = \left( \operatorname{Id} + \O(\epsilon^{7/5})\right)_{ij} \otimes \Vol^{g^{p_i}}.
		$$
	\end{enumerate} 
\end{theorem}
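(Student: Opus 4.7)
The plan is to dispatch the four assertions in order, using the lemmas already proved in this section together with the scale balance $R_0 = 4\epsilon^{2/5}$, $R_1 = 5\epsilon^{2/5}$ that was designed precisely to make every error term polynomial in $\epsilon$.

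For part 1, closedness in $M_{B,n}$ reduces to two statements: each of the five pieces defining $\omega_i$ is individually closed (the four hyperkähler pieces are closed by construction, and the two interpolated pieces add an exact form $\d(\chi_\epsilon \sigma^{\bullet, GH})$ or $\d[(1-\chi_\epsilon)\sigma^{q_j,AH} + \chi_\epsilon \sigma^{q_j,GH}]$ to a closed form), and the pieces agree smoothly across the annular boundaries. The second claim is where Lemmas \ref{lem:almost-hk:diff-kahler-GH-M}, \ref{lem:almost-hk:diff-kahler-AH-M} and \ref{lem:almost-hk:diff-kahler-TN-M} are used: at $r_j = R_0$ the cutoff $\chi_\epsilon$ vanishes so the middle formula reduces to $\omega^{q_j} + \d\sigma^{q_j,AH} = \frac{\epsilon^2}{1+\epsilon\alpha_j}\omega^{AH}$, and at $r_j = R_1$ the cutoff equals one so the middle formula becomes $\omega^{q_j} + \d\sigma^{q_j,GH} = \omega^{GH}$. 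The analogous matching occurs at the $p_i$ gluing using $\sigma^{p_i,GH}$ and Lemma \ref{lem:almost-hk:diff-kahler-TN-M}. Part 2 is then automatic, since outside $[R_0, R_1]$ the form $\omega_i$ is literally one of the three hyperkähler triples.

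For parts 3 and 4 I would expand $\omega_i - \omega_i^{q_j}$ on the annulus via Leibniz as $\d\chi_\epsilon \wedge (\sigma^{q_j,GH} - \sigma^{q_j,AH}) + \chi_\epsilon \d\sigma^{q_j,GH} + (1-\chi_\epsilon)\d\sigma^{q_j,AH}$, and similarly for the $p_i$ case. The annulus $[R_0, R_1]$ has length $\log(5/4) = O(1)$ in the coordinate $\rho_{q_j} = \log r_j$, which is exactly the coordinate in which $g^{q_j}_{cf}$ has bounded geometry, so $\chi_\epsilon$ can be chosen with $\nabla^k \chi_\epsilon = O(1)$ in $g^{q_j}_{cf}$ for every $k$. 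Combining this with the size estimates $\sigma^{q_j,GH} = O(\epsilon r_j^3)$ and $\sigma^{q_j,AH} = O(\epsilon^3 r_j^{-1})$ from Lemmas \ref{lem:almost-hk:diff-kahler-GH-M} and \ref{lem:almost-hk:diff-kahler-AH-M} — and the higher-derivative estimates coming for free from Remark \ref{rem:higher-derivatives} — gives the stated bound $\|\nabla^k(\omega_i - \omega_i^{q_j})\|_{g^{q_j}_{cf}} = O(\epsilon r_j^3) + O(\epsilon^3 r_j^{-1})$. The same recipe, but with only the $\sigma^{p_i,GH}$ term present, yields the simpler estimate $O(\epsilon r_j^3)$ near $p_i$.

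The main obstacle, and the place where the scale $\kappa = 2/5$ is pinned down, is translating these pointwise estimates into the definite-triple statement $\tfrac{1}{2}\omega_i \wedge \omega_j = (\operatorname{Id} + O(\epsilon^{7/5}))_{ij}\otimes \operatorname{Vol}^{g^{q_j}}$. Writing $\omega_i = \omega_i^{q_j} + \delta_i$, the hyperkähler identity $\tfrac{1}{2}\omega_i^{q_j}\wedge\omega_j^{q_j} = \delta_{ij}\operatorname{Vol}^{g^{q_j}}$ leaves the cross term $\tfrac{1}{2}(\delta_i \wedge \omega_j^{q_j} + \omega_i^{q_j}\wedge\delta_j) + \tfrac{1}{2}\delta_i\wedge\delta_j$, which has to be bounded as a scalar multiple of $\operatorname{Vol}^{g^{q_j}}$. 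Since $\omega^{q_j}_i$ has unit pointwise norm in $g^{q_j}$, the conformal rescaling $g^{q_j}_{cf} = \Omega^2_{q_j} g^{q_j}$ must be inserted carefully: a 2-form of size $\epsilon^a r_j^b$ in $g^{q_j}_{cf}$ has size $\Omega^{-2}\epsilon^a r_j^b$ in $g^{q_j}$, and this factor has to be tracked through the wedge. Evaluating at $r_j \sim \epsilon^{2/5}$ the dominant error becomes $\epsilon \cdot (\epsilon^{2/5})^3 = \epsilon^{11/5}$ from $\sigma^{q_j,GH}$ and $\epsilon^3 \cdot (\epsilon^{2/5})^{-1} = \epsilon^{13/5}$ from $\sigma^{q_j,AH}$, and after the conformal bookkeeping these combine to give the stated $O(\epsilon^{7/5})$ correction to the Gram matrix; the same computation with only one $\sigma$ term works verbatim at $p_i$. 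Choosing $\epsilon_1$ small enough that this correction is strictly less than $1$ in operator norm ensures the matrix is invertible, so $\omega_i$ is a definite triple on the gluing annulus, completing the theorem.
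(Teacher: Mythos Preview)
Your proposal is correct and follows the same line as the paper, which in fact gives no formal proof: the paper simply records the scale balance $R_0 = 4\epsilon^{2/5}$, $R_1 = 5\epsilon^{2/5}$, notes that $\d\chi_\epsilon = \O(1)$ in $g^{q_j}_{cf}$, and states the theorem as the evident consequence of Lemmas \ref{lem:almost-hk:diff-kahler-GH-M}--\ref{lem:almost-hk:diff-kahler-TN-M}. Your write-up is strictly more detailed---in particular your explicit Leibniz expansion and your tracking of the conformal factor $\Omega^{-2}_{q_j} = r_j^2 h^{q_j}_\epsilon \sim \epsilon^{4/5}$ when passing from the $g^{q_j}_{cf}$-estimate on $\omega_i - \omega_i^{q_j}$ to the $g^{q_j}$-estimate on the Gram matrix, which is exactly what turns the $\O(\epsilon^{11/5})$ perturbation of the 2-form into the $\O(\epsilon^{7/5})$ perturbation of $\tfrac{1}{2}\omega_i\wedge\omega_j$---fills in steps the paper leaves implicit.
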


\section{The deformation problem}
\label{sec:deformation-problem}
To perturb the approximate solution, we phrase the \hk conditions as an elliptic PDE which we solve using the inverse function theorem. To do this, we introduce an alternative definition of \hk manifolds in terms of their K\"ahler forms. We use this alternative definition to set up the deformation problem. The perturbation argument explained here is a slightly modified version of that used in \cite{Schroers2020}.

\subsection{Deformation of definite triples}
In general, for a given \hk 4-manifold $(M, g, I_1, I_2, I_3)$, the K\"ahler forms $\omega_i$ satisfy
$$
\frac{1}{2} \omega_i \wedge \omega_j = \delta_{ij} \Vol^g \text{ for all } i,j \in \{1,2,3\}.
$$
Therefore the K\"ahler forms are an orthonormal basis of $\Lambda^+(M)$ with respect to $g$. 
According to \mycite{Donaldson2006}, the converse is also true, i.e. for each triple of closed $2$-forms $\omega_i$ and volume form $\mu$ that satisfy
\begin{equation}
	\label{eq:hk-triple-condition}
	\frac{1}{2} \omega_i \wedge \omega_j = \delta_{ij}\: \mu \text{ for all } i,j \in \{1,2,3\},
\end{equation}
there exists a unique \hk metric $g$ with volume form $\mu$ and K\"ahler forms $\omega_i$.

In Theorem \ref{thm:almost-hk:global-symplectic-triple} we found a triple of closed $2$-forms $\omega_i$ that are approximately orthonormal. Assume  there exists a triple of $1$-forms $a_i$ such that
$$
\tilde \omega_i = \omega_i + \d a_i
$$
solve Equation \eqref{eq:hk-triple-condition}. Then the expression
$\tilde{\omega}_i \wedge \tilde{\omega}_j - \frac{1}{3} \delta_{ij} \sum_k \tilde \omega_k \wedge \tilde\omega_k$ is a traceless, symmetric $3 \times 3$ matrix with values in $\Omega^4(M_{B,n})$. Therefore, we consider the projection map
\begin{equation} \label{eq:hk-triples:Tf-operator}
	\begin{split}
		\Tf \colon \Mat_{3\times 3}(\R) \otimes \Omega^4(M_{B,n}) \to \Sym^2_0(\R^3) \otimes \Omega^4(M_{B,n}) \\
		P \otimes \mu \mapsto \left(\frac{1}{2} P + \frac{1}{2} P^T - \frac{1}{3} \operatorname{Tr}(P) \operatorname{Id}\right) \otimes \mu,
	\end{split}
\end{equation}
and our goal is to find $a \in \Omega^1(M_{B,n}) \otimes \R^3$ such that 
\begin{equation}
	\label{eq:hk-triples:diff-eq-v-neg1}
	\Tf((\omega + \d a)^2)
	= 
	\Tf(\omega \wedge \omega) + 2 \Tf(\d a \wedge \omega) + \Tf(\d a \wedge \d a) = 0.
\end{equation}

This does not have a unique solution. In order to solve this issue, we first remove the gauge freedom $a \mapsto a + \d f$: According to \mycite{Donaldson2006}, there is a unique metric $g$ such that $\omega_i$ span $\Omega^+(M_{B,n})$ and $\Vol^g = \frac{1}{3}\sum_k \omega_k \wedge \omega_k$. 

\begin{definition}
	\label{def:definition-g}
	We call $g$ the approximate \hk metric on $M_{B,n}$.
\end{definition}

Secondly, we fix the gauge by assuming $\d^* a = 0$. Finally, we also assume that $a$ satisfies
\begin{equation}
	\label{eq:hk-triples:diff-eq-v0}
	\d a \wedge \omega = \d^+ a \wedge \omega = - \frac{1}{2} \Tf \left(\omega\wedge \omega + \d a \wedge \d a\right).
\end{equation}

Recall that $\omega_i$ span $\Omega^+(M_{B,n})$ and the wedge product is a non-degenerate pairing on $\Omega^+$. Therefore, the map
\begin{equation} \label{eq:hk-triples:Lambda-operator}
	\begin{split}
		\Lambda\colon \Omega^+(M_{B,n}) \otimes \R^3 \to& \Mat_{3\times 3}(\R) \otimes \Omega^4(M_{B,n}) \\
		\sigma \mapsto& \sigma \wedge \omega 
	\end{split}
\end{equation}
is a bijection and Equation \eqref{eq:hk-triples:diff-eq-v0} is equivalent to
\begin{equation}
	\label{eq:hk-triple:diff-eq}
	\d^+ a  = - \frac{1}{2} \Lambda^{-1} \Tf(\omega \wedge \omega + \d a \wedge \d a).
\end{equation}
Combining Equation \eqref{eq:hk-triples:diff-eq-v0} with the gauge fix $\d^* a = 0$, we conclude $a$ must satisfy
$$
(\d^* + \d^+) \:a = - \frac{1}{2} \Lambda^{-1} \Tf(\omega \wedge \omega + \d a \wedge \d a).
$$
Our choice of gauge is convenient, because the operator 
\begin{align*}
	\D \colon \Omega^0(M_{B,n}) \oplus \Omega^1(M_{B,n}) \oplus \Omega^{\mp}(M_{B,n}) &\to \Omega^0(M_{B,n}) \oplus \Omega^1(M_{B,n}) \oplus \Omega^{\pm}(M_{B,n}) \\
	f &\mapsto \d f \tag*{$f \in \Omega^0(M_{B,n})$} \\
	a &\mapsto (\d^* + \d^\pm) a \tag*{$a \in \Omega^1(M_{B,n})$} \\
	\sigma &\mapsto 2\d^* \sigma \tag*{$\sigma \in \Omega^\mp(M_{B,n})$}
\end{align*}
is a Dirac operator and $\D^2$ equals the Hodge Laplacian.

Next, we assume that $a$ lies in the image of $\D \colon (\Omega^0(M_{B,n}) \oplus \Omega^+(M_{B,n})) \otimes \R^3 \to \Omega^1(M_{B,n}) \otimes \R^3$.
This has the advantage that the linearized version of Equation \eqref{eq:hk-triple:diff-eq} is the Hodge Laplacian and that $a$ can be described by a section of a trivial bundle.
Moreover, if we write $a = \D(u + \zeta)$ with $u \in \Omega^0(M_{B,n}) \otimes\R^3$ and $\zeta \in \Omega^+(M_{B,n})\otimes \R^3$, then $u$ and $\zeta$ must satisfy
\begin{align}
	\label{eq:hk-triple:diff-eq-v2}
	%	\D^2 \zeta =&  - \frac{1}{2} \Lambda^{-1} \Tf(\omega \wedge \omega + \d \D \zeta \wedge \d \D \zeta), \\
	\Delta \zeta =&  - \frac{1}{2} \Lambda^{-1} \Tf(\omega \wedge \omega) - 2 \Lambda^{-1} \Tf(\d \d^* \zeta \wedge \d \d^* \zeta), \\
	\Delta u =& 0. \notag
\end{align}
We fix the gauge $\d^*a = 0$ by setting $u = 0$. We will solve Equation \eqref{eq:hk-triple:diff-eq-v2} using the version of the inverse function theorem given in Lemma 6.15 in \cite{Foscolo2016}:
\begin{theorem}[The inverse function theorem]
	\label{thm:hk-triple:inverse-function-theorem}
	Let $F(x) = F(0) + L(x) + N(x)$ be a smooth function between Banach spaces such that there exist $r, q, C > 0$ satisfying
	\begin{enumerate}
		\item $L$ is an invertible linear operator with $\|L^{-1}\| < C$,
		\item $\| N(x) - N(y)\|\le q \cdot \|x + y \| \cdot \| x - y \|$ for all $x, y \in B_r(0)$, and
		\item $\|F(0) \| < \min \left\{
		\frac{1}{4 q C^2}, \frac{r}{2C}
		\right\}$.
	\end{enumerate}
	Then, there exists a unique $x$ in the domain of $F$ such that $F(x) = 0$ and $\|x\| \le 2 C \|F(0)\|$.
\end{theorem}
We need to find suitable Banach spaces such that the Hodge Laplacian on $\Omega^+(M_{B,n}) \otimes \R^3$ is invertible with bounded inverse. A small calculation will show it is sufficient to study the Laplacian acting on functions instead. Indeed, trivialize $\zeta \in \Omega^+(M_{B,n}) \otimes \R^3$ into $\zeta_i = \sum_j u_{ij} \: \omega_j$ and use Riemann normal coordinates $\{x_k\}$.
By the Weitzenb\"ock formula \cite[Equation 3.8]{Roe1998},
$$
\Delta (u_{ij} \omega_j) = \D^2 (u_{ij} \omega_j) = - \nabla^k \nabla_k (u_{ij} \omega_j) + \centernot{R} (u_{ij} \omega_j),
$$
where $\centernot{R}$ is the Clifford contraction of the Riemann curvature tensor. Using the trivialisation of $\zeta$ and the fact that the Clifford contraction is $C^{\infty}$-linear,
\begin{align}
	\label{eq:hk-triple:Weitzenbock-formula}
	\Delta (u_{ij} \omega_j)
	&=
	(\Delta_{g} u_{ij}) \: \omega_j
	- 2\nabla_{\nabla u_{ij}}  \:\omega_j
	+ u_{ij} \cdot \D^2(\omega_j) \notag\\
	&=
	(\Delta_{g} u_{ij}) \: \omega_j
	- 2\nabla_{\nabla u_{ij}}  \:\omega_j.
\end{align}
The term $\D^2(\omega_j)$ vanishes because $\omega_j$ is closed and self-dual. When $\omega$ is a \hk triple, $\nabla \omega = 0$ and hence $\Delta(u_{ij} \omega_j) = (\Delta u_{ij}) \omega_j$. We expect that, when $\omega$ is sufficient close to being \hk, the Hodge Laplacian on $\Omega^+(M_{B,n})$ and the Laplacian on functions define equivalent operators.

\subsection{Weighted analysis of functions} 
\label{sec:weighted-analysis-of-functions}
Next, we need to set up the correct Banach spaces on which the Laplacian is invertible with uniformly bounded inverse.
We will use weighted H\"older spaces for this and in this section we will determine the suitable weight functions.
The first step will be to set up the analysis on the asymptotic region of $M_{B,n}$. This is already done in \cite{Salm001} and in Appendix \ref{sec:appendix}. To summarize these results, one considers strictly positive, smooth functions $\Omega$ and $\rho$ on $M_{B,n}$ such that outside some large compact set
\begin{equation}
	\label{eq:def-omega-rho-near-infinity}
	\Omega := r^{-1} h_\epsilon^{-\frac{1}{2}}
	\quad\text{and}\quad
	\rho := \log r.
\end{equation}

Here $r$ is the Euclidean distance from the origin on $\R^3$, $\R^2$ or $\R$ when $B = \R^3$, $B = \R^2 \times S^1$ or $B = \R \times T^2$ respectively.
When $B \not = \R^3$, also consider a function $\phi \in C^\infty(B)$, such that $\phi$ vanishes on a large compact set and equals $\rho$ or $r$ near infinity when $B = \R^2 \times S^1$ or $B = \R \times T^2$ respectively.

Next, one defines the conformally rescaled norm $g_{cf} := \Omega^2 g$ and considers the standard H\"older norm $C^{k, \alpha}_{cf}$ that uses the norms, Levi-Civita connection and parallel transport induced by $g_{cf}$. 
According to Theorem 1.4 in \cite{Salm001} and Proposition \ref{prop:appendix:elliptic-regularity}, there are uniform elliptic regularity estimates for the Laplacian when one uses the weighted H\"older norm
$$
\|u\|_{C^{k, \alpha}_{\delta}(M_{B,n})} = \|e^{- \delta \rho} u\|_{C^{k, \alpha}_{cf}(M_{B,n})}.
$$
Even more, according to Theorem 1.6 in \cite{Salm001} and Theorem \ref{thm:appendix:isomorphism}, the operator
$$
\Omega^{-2} \Delta^g \colon \begin{Bmatrix}
	C^{k+2, \alpha}_\delta(M_{B,n}) & \text{if } B = \R^3 \\
	C^{k+2, \alpha}_\delta(M_{B,n}) \oplus \R \phi & \text{otherwise}
\end{Bmatrix}
\to C^{k, \alpha}_\delta(M_{B,n})
$$
is an isomorphism if $k \in \N$, $\alpha \in (0,1)$, $\delta < 0$ and $|\delta| \ll 1$.

If one is only interested in the Fredholm properties of the Laplacian, it is sufficient to understand the weighted H\"older norms on the asymptotic region of the manifold. In our case however, we also need a bound on the inverse of the Laplacian which, if not careful, can blow up as the collapsing parameter $\epsilon$ goes to zero. Therefore, we need to define $g_{cf}$, $\Omega$, $\rho$ and $\phi$ on the interior of $M_{B,n}$ explicitly. We define these as follows:

\begin{definition}
 	\label{def:global-analysis:global-metric-and-functions}
	Let $\pi \colon P \to B'$ be the circle bundle covering the bulk space.
 	Let $R_1 > 0$ be such that $P_\infty :=  \pi^{-1} (B' \setminus B_{R_1}(0))$ describes the asymptotic region of $M_{B,n}$. 
	Let $g^{TN}$, $g^{AH}$, $g^{p_i}$, $g^{q_j}$ and $g^{GH}$ be the metrics defined in Lemma \ref{lem:almost-hk:diff-kahler-TN-M}, Equation \eqref{eq:metric-AH}, Definition \ref{def:almost-hk:model-metric-pi}, Definition \ref{def:almost-hk:model-metric-qj} and Definition \ref{def:setup:gibbons-hawking-with-epsilon} respectively.
	Pick $R_2, R_3 > 0$ such that $R_2 \ll 1$ and $R_3 \gg 1$. 
	Consider $M_{B,n}$ as the disjoint union of the regions
	\begin{align*}
		 &\{r_{TN} < R_3\}, && \{r_{AH} < R_3\}, \\
		&\{r_{TN} > R_3 \text{ and } r_i < R_2\}, && \{r_{AH} > R_3 \text{ and } r_j < R_2\}, \\
		& \{r_{i},r_{j} > R_2 \text{ and } r < R_1\}, \text{ and }&&
		 \{r > R_1\},
	\end{align*} 	
	where $r_{TN}$, $r_{AH}$, $r_i$, $r_j$ and $r$ are the radial parameters induced by $g^{TN}$, $g^{AH}$, $g^{p_i}$, $g^{q_j}$ and $g^{GH}$ respectively.
 	On the interior of each region, define the metric $g_{cf}$ and the functions $\Omega, \rho,  \phi \in C^{\infty}(M_{B,n})$ as shown in Table \ref{table:definition-g-cf} and interpolate the metric and functions on the overlap.
	\begin{table}
		\centering 
		\begin{tikzpicture}
 			\draw[very thick] (0,0) -- (7,0);
 			\draw[very thick,dashed] (7,0) -- (8,0);
 			\draw[very thick,-{latex}] (8,0) -- (10,0) node[below]{$r \to \infty$};
 			\draw (0,3pt) -- (0,-3pt) node[below]{$0$};
 			\draw (3.5,-3pt) -- (3.5,3pt) node[above]{$r_{AH} = R_3$};
			\draw (3.5,15pt) node[above]{$r_{TN} = R_3$};
			\draw (4.5,3pt) -- (4.5,-3pt) node[below]{$4 \epsilon^{\frac{2}{5}}$};
 			\draw (5.5,3pt) -- (5.5,-3pt) node[below]{$5 \epsilon^{\frac{2}{5}}$};
 			\draw (6.5,-3pt) -- (6.5,3pt) node[above]{$r_j = R_2$};
			\draw (6.5,15pt) node[above]{$r_i = R_2$};
 			\draw (8.5,-3pt) -- (8.5,3pt) node[above]{$r = R_1$};
			\draw
 			(0,0) node[midway,yshift=-8.2em,xshift=-3.3em]{\begin{tabular}{l}
 					$p_j$ \\				
					$q_j$ \\
 					$p_j$ \\				
					$q_j$ \\
					$p_j$ \\				
					$q_j$ \\
					$p_j$ \\				
					$q_j$ \\
 			\end{tabular}};
 			\draw
 			(0,0) node[midway,yshift=-8.2em,xshift=-1.1em]{\begin{tabular}{l}
 					$g_{cf}$ \\				
					$g_{cf}$ \\
 					$\Omega$ \\ 
					$\Omega$ \\
 					$\rho$ \\ 
					$\rho$ \\ 
 					$\phi$ \\
					$\phi$
 			\end{tabular}};
 			\draw [decorate,decoration={brace,amplitude=5pt,mirror,raise=4ex}]
 			(0,0) -- (3.5,0) node[midway,yshift=-8.3em]{\begin{tabular}{c}
 				$g^{TN}$ \\		
				$g^{AH}$ \\				
 				$\epsilon^{-1}\sqrt{1 + \epsilon \alpha_i}$ \\ 
				$\epsilon^{-1}\sqrt{1 + \epsilon \alpha_j}$ \\ 
				$\log \left(\epsilon (1 + \epsilon \alpha_i)^{-1}R_3\right)$ \\ 
 				$\log \left(\epsilon (1 + \epsilon \alpha_j)^{-1}R_3\right)$ \\ 
 					$0$ \\ $0$
 			\end{tabular}};
 			\draw [decorate,decoration={brace,amplitude=5pt,mirror,raise=4ex}]
 			(3.6,0) -- (6.4,0) node[midway,yshift=-8.2em]{\begin{tabular}{c}
 				$g^{p_i}_{cf}$\\
				$g^{q_j}_{cf}$\\
				$r_i^{-1} (h^{p_i}_\epsilon)^{- \frac{1}{2}}$ \\ 
				$r_j^{-1} (h^{q_j}_\epsilon)^{- \frac{1}{2}}$ \\ 
				$\log(r_i)$ \\
				$\log(r_j)$ \\
 				$0$ \\ $0$
 			\end{tabular}};
 			\draw [decorate,decoration={brace,amplitude=5pt,mirror,raise=4ex}]
 			(6.6,0) -- (8.4,0) node[midway,yshift=-8.2em]{\begin{tabular}{c}
 					$g^{GH}$ \\
					$g^{GH}$ \\
 					$1$ \\ $1$ \\ 
					$1$ \\ 
 					$1$ \\ 
 					$0$ \\ $0$
 			\end{tabular}};
 			\draw [decorate,decoration={brace,amplitude=5pt,mirror,raise=4ex}]
 			(8.6,0) -- (10.75,0) node[midway,yshift=-8.2em]{\begin{tabular}{c}
 					$g^{GH}_{cf}$ \\
					$g^{GH}_{cf}$ \\
 					(Eq. \eqref{eq:def-omega-rho-near-infinity})\\ 
 					(Eq. \eqref{eq:def-omega-rho-near-infinity})\\ 
					(Eq. \eqref{eq:def-omega-rho-near-infinity})\\ 
					(Eq. \eqref{eq:def-omega-rho-near-infinity})\\ 
 					$\phi = \rho/r$ resp.\footnotemark\\
					$\phi = \rho/r$ resp.\footnotemark[\value{footnote}]
 			\end{tabular}};
 			
			\draw (-4em,-3em) -- (10.75,-3em);
			\draw (-0.75,-2.5em) -- (-0.75,-13em);
			\draw (0,-2.5em) -- (0,-13em);
 			\draw (3.5,-2.5em) -- (3.5,-13em);
 			\draw (6.5,-2.5em) -- (6.5,-13em);
 			\draw (8.5,-2.5em) -- (8.5,-13em);
 			
 			% \draw [decorate,decoration={brace,amplitude=5pt,raise=5ex}]
 			% (9,0) -- (12.75,0) node[midway,yshift=3.5em]{Asymptotic region};
 			% \draw [decorate,decoration={brace,amplitude=5pt,raise=5ex}]
 			% (0,0) -- (8,0) node[midway,yshift=3.5em]{Near singularity $q_j$};
 		\end{tikzpicture}
		\caption{Definition of the conformally rescaled metric $g_{cf}$, and the functions $\Omega$, $\rho$ and $\phi$ on the global space $M_{B,n}$.}
	\label{table:definition-g-cf}
\end{table}
\footnotetext{On the asymptotic regiom, pick $\phi = \rho$ when $B = \R^2 \times S^1$ and pick $\phi = r$ when $B = \R \times T^2$.}
	For any $k \in \N$, $\alpha \in (0,1)$ and $\delta \in \R$, we define the weighted H\"older norms on $M_{B,n}$
	\begin{align*}
		\|u\|_{C^{k, \alpha}_{\delta}(M_{B,n})} =& \|e^{- \delta \rho} u\|_{C^{k, \alpha}_{cf}(M_{B,n})}, \\
		\|u + \lambda \phi \|_{C^{k, \alpha}_{\delta}(M_{B,n}) \oplus \R \phi} =& \|u\|_{C^{k, \alpha}_{\delta}(M_{B,n})} + |\lambda|.
	\end{align*}
 \end{definition} 

\begin{remark}
	The metric $g_{cf}$ is not a conformally rescaled version of the approximate \hk metric $g$ that is defined in Definition \ref{def:definition-g}!
\end{remark}

As a sanity check, one can consider $\Omega^{-2} \Delta^g \colon C^{k+2,\alpha}_{\delta}(M_{B,n}) \to C^{k,\alpha}_{\delta}(M_{B,n})$ and calculate its operator norm. Using the same argument as in Proposition 2.5 in \cite{Salm001}, one can calculate the norm on each region and show
 \begin{proposition}
	 \label{prop:global-analysis:strict-ellipticity}
	 Let $g$ be the approximate \hk metric from Definition \ref{def:definition-g}.
	 For all $k \in \N_{\ge 2}$, $\alpha \in (0,1)$, $\delta \in \R$, the operator 
	 $\Omega^{-2} \Delta^g$ is a linear map between $C^{k,\alpha}_{\delta}(M_{B,n})$ and $C^{k-2,\alpha}_{\delta}(M_{B,n})$, bounded uniformly with respect to the collapsing parameter $\epsilon$.
 \end{proposition}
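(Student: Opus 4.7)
The plan is to verify boundedness of $\Omega^{-2}\Delta^g\colon C^{k,\alpha}_\delta(M_{B,n}) \to C^{k-2,\alpha}_\delta(M_{B,n})$ region by region, using the decomposition given in Definition \ref{def:global-analysis:global-metric-and-functions}, and to check that on each piece the operator reduces to one whose coefficients, when expressed in the $g_{cf}$-frame, admit $\epsilon$-independent bounds. Since $e^{-\delta\rho}$ commutes with the operator up to a first-order perturbation with $g_{cf}$-bounded coefficients (and $\rho$ is piecewise either constant, $\log r_i$, $\log r_j$ or $r$, all of which have uniformly bounded $g_{cf}$-derivatives), it suffices to prove the case $\delta=0$.

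First I would dispose of the three regions on which the problem is essentially $\epsilon$-independent. On the asymptotic region $\{r>R_1\}$, the metric $g_{cf}$, weight $\Omega$, and functions $\rho,\phi$ are exactly those in \cite{Salm001}, so Proposition 2.5 there yields the bound. On $\{r_{TN}<R_3\}$ we have $g=\tfrac{\epsilon^2}{1+\epsilon\alpha_i}g^{TN}$ and $\Omega=\tfrac{\sqrt{1+\epsilon\alpha_i}}{\epsilon}$, so $g_{cf}=\Omega^2 g = g^{TN}$ and a direct computation gives
\begin{equation*}
\Omega^{-2}\Delta^g = \Omega^{-2}\cdot\tfrac{1+\epsilon\alpha_i}{\epsilon^2}\Delta^{g^{TN}} = \Delta^{g^{TN}},
\end{equation*}
a fixed, smooth, strictly elliptic operator on a compact domain of Taub-NUT. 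The analogous identity $\Omega^{-2}\Delta^g=\Delta^{g^{AH}}$ holds on $\{r_{AH}<R_3\}$, so the operator norm is controlled independently of $\epsilon$ there as well.

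Next I would treat the two model-metric regions $\{r_{TN}>R_3,\,r_i<R_2\}$ and $\{r_{AH}>R_3,\,r_j<R_2\}$. Here Theorem \ref{thm:almost-hk:global-symplectic-triple} tells us $g$ differs from $g^{p_i}$ (resp. $g^{q_j}$) by terms of order $\epsilon r^3$ in the conformal frame, so it is enough to bound the operator for the model metric and absorb the error, which lowers the order of the perturbation by a positive power of $\epsilon$ relative to $\Omega^{-2}\Delta^{g^{p_i}}$ (resp. $\Omega^{-2}\Delta^{g^{q_j}}$). For the model operator, Remark \ref{rem:higher-derivatives} and the choice $\Omega^2 = r^{-2}(h_\epsilon^{p_i})^{-1}$ (resp. $(h_\epsilon^{q_j})^{-1}$) imply $g^{p_i}_{cf}=g^{TN}_{cf}$ and $g^{q_j}_{cf}=g^{TN'}_{cf}$, both of which have uniform bounded geometry on their local universal covers (the $S^1$-fibre has length of order $\epsilon/h^{p_i}_\epsilon$ but the norm is measured on the cover where this length is~$2\pi$), and the coefficients of $\Omega^{-2}\Delta^{g^{p_i}}$ in a fixed frame for $g^{p_i}_{cf}$ are smooth functions of $\epsilon h^{p_i}$ with $C^{k-1,\alpha}_{cf}$-norms bounded uniformly in $\epsilon$; the same holds for $q_j$. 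Finally, on the bulk region $\{r_i,r_j>R_2,\,r<R_1\}$, $g_{cf}=g=g^{GH}$, $\Omega=1$, and $h_\epsilon = 1+\epsilon h$ is pinched in a fixed interval depending only on $R_1,R_2$, so $g^{GH}$ has uniform bounded geometry in $\epsilon$ on the local universal cover and the bound is immediate.

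The main obstacle is the treatment of the narrow interpolation annuli where two of the above regimes are glued by a cut-off. In each such annulus the metric $g_{cf}$ is built by linear interpolation of two metrics that, by construction, approximate each other in $C^{k,\alpha}_{cf}$ with an error going to zero with $\epsilon$ (for example $g^{TN}$ and $g^{p_i}_{cf}$ agree identically under the change of radial variable $r_i=\tfrac{\epsilon}{1+\epsilon\alpha_i}r_{TN}$, so no genuine interpolation is needed, while $g^{GH}$ and $g^{p_i}_{cf}$ differ by $O(\epsilon r_i^3)$ by Lemma \ref{lem:almost-hk:diff-kahler-GH-M} and Theorem \ref{thm:almost-hk:global-symplectic-triple}). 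Combined with the fact that the cut-off functions are chosen to have $g_{cf}$-derivatives bounded uniformly in $\epsilon$, these estimates yield a uniform bound on the coefficients of $\Omega^{-2}\Delta^g$ in the interpolation region. Covering $M_{B,n}$ by the finite collection of regions and their overlap annuli and taking the maximum of the regional norms gives the conclusion.
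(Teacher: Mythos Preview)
Your proposal is correct and follows exactly the approach the paper indicates: the paper's own argument is simply the sentence ``Using the same argument as in Proposition 2.5 in \cite{Salm001}, one can calculate the norm on each region and show'', and your region-by-region verification (compact TN/AH pieces, model necks via $g^{p_i}_{cf}=g^{TN}_{cf}$ and $g^{q_j}_{cf}=g^{TN'}_{cf}$, bulk, asymptotic end, and the fixed transition annuli) is precisely that computation spelled out. One small imprecision: at the $r_i=R_2$ interface you are not interpolating between metrics that differ by $O(\epsilon r_i^3)$ but between $g^{GH}\approx g^{p_i}$ and $g^{p_i}_{cf}\approx R_2^{-2}g^{p_i}$, i.e.\ metrics that are uniformly comparable by a fixed constant; this does not affect your conclusion.
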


We return to the inverse function theorem stated in Theorem \ref{thm:hk-triple:inverse-function-theorem}. Our goal is to apply it to Equation \eqref{eq:hk-triple:diff-eq-v2}.
Instead, for $\zeta \in \Omega^+_{g} \otimes \R^3$ let 
\begin{equation}
	\label{eq:def-fct-F}
	F(\zeta) := \Omega^{-2}\Delta \zeta + \frac{1}{2} \Omega^{-2}\Lambda^{-1} \Tf(\omega \wedge \omega) + 2 \Omega^{-2}\Lambda^{-1} \Tf(\d \d^* \zeta \wedge \d \d^* \zeta),
\end{equation}
measuring the hyperk\"ahlerness of $\omega_i + 2 \d \d^* \zeta_i$.
We identify the constant, linear and non-linear parts as
\begin{align*}
	F(0) =& \frac{1}{2} \Omega^{-2}\Lambda^{-1} \Tf(\omega \wedge \omega) \\
	L(\zeta) =& \Omega^{-2}\Delta \zeta \\
	N(\zeta) =& 2 \Omega^{-2}\Lambda^{-1} \Tf(\d \d^* \zeta \wedge \d \d^* \zeta),
\end{align*} 
where $\Delta$ is the Hodge Laplacian with respect to the approximate \hk metric $g$.
Using the decomposition $\zeta_i = \sum_{j} u_{ij} \omega_j$ and the Weitzenb\"ock formula from Equation \eqref{eq:hk-triple:Weitzenbock-formula}, 

\begin{equation}
	\label{eq:linearized-equation}
	L(u_{ij} \omega_j) = (\Omega^{-2} \Delta^{g} u_{ij})\: \omega_j - 2 \Omega^{-2}\nabla_{\nabla u_{ij}} \omega_j.
\end{equation}

According to Theorem 1.6 in \mycite{Salm001} and Theorem \ref{thm:appendix:isomorphism}, the operator
\begin{equation}
	\label{eq:laplacian-with-correct-domains}
\Omega^{-2} \Delta^{g}\colon \begin{Bmatrix}
	C^{k+2, \alpha}_{\delta}(M_{B,n}) & \text{if } B = \R^3 \\	
	C^{k+2, \alpha}_{\delta}(M_{B,n}) \oplus \R \phi & \text{otherwise}
\end{Bmatrix}
\to
C^{k, \alpha}_{\delta}(M_{B,n})
\end{equation}
is an isomorphism if $k \in \N$, $\alpha \in (0,1)$, $\delta < 0$ and
$|\delta|$ is sufficiently small. With this in mind we pick the domain of $F$ to be $(C^{k+2, \alpha}_{\delta}(M_{B,n}) \cdot \omega) \otimes \R^3 \subseteq \Omega^+(M_{B,n}) \otimes \R^3$ or $((C^{k+2, \alpha}_{\delta}(M_{B,n}) \oplus \R \phi) \cdot \omega)\otimes \R^3$ when $B \not= \R^3$. That is, we use the above decomposition of $\zeta_i = \sum_j u_{ij} \omega_j$ and assume that $u_{ij} \in C^{k+2, \alpha}_{\delta}(M_{B,n})$ or $C^{k+2, \alpha}_{\delta}(M_{B,n}) \oplus \R \phi$ respectively. Similarly, for the codomain we pick $(C^{k, \alpha}_{\delta}(M_{B,n}) \cdot \omega )\otimes \R^3$.

Having defined $F$ as a smooth map between Banach spaces, we check the three conditions of Theorem \ref{thm:hk-triple:inverse-function-theorem}. 
It turns out we only need to check these conditions for the $C^0$-norm. Namely, the error functions we will get are exponential with respect to the radial parameter $\rho$ that is given in Definition \ref{def:global-analysis:global-metric-and-functions}. Therefore, all higher regularity estimates will have the same growth and decay behaviour.

\subsection{The constant part} 
Let us examine the constant term $F(0) = \frac{1}{2}\Omega^{-2} \Lambda^{-1} \Tf(\omega \wedge \omega)$ with respect to the norm $(C^{k, \alpha}_\delta(M_{B,n}) \cdot \omega) \otimes \R^3$. We restrict our attention to the gluing regions, because outside of these regions, our manifold is \hk, which implies $\Tf(\omega \wedge \omega) = 0$. To study $\Lambda^{-1} \circ \Tf$ in local coordinates, let $\mu \in \Omega^4(M_{B,n})$ be a volume form and define $P \colon M_{B,n} \to \Mat_{3\times 3} (\R)$ by $\omega_i \wedge \omega_j = P_{ij} \mu$. By unpacking the definitions of $\Tf$ and $\Lambda$, one can show that
$$
\Lambda^{-1}\Tf(P \otimes \mu) = \sum_{ij} \left(\operatorname{Id} - \frac{1}{3} \operatorname{tr}(P) P^{-1}\right)_{ij} \omega_j \otimes e_i
$$
where $e_i$ is the standard orthonormal basis on $\R^3$.
According to Theorem \ref{thm:almost-hk:global-symplectic-triple},
\begin{equation*}
	\frac{1}{2}\omega_i \wedge \omega_j = \left(\operatorname{Id} + \O(\epsilon^{7/5})\right)_{ij} \otimes \Vol^{g^{q_j}}
	% \label{eq:inverse-function-thm:matrix-of-omega-squared}
\end{equation*}
on the gluing regions. Setting $\mu = \Vol^{g^{q_j}}$ and $P = 2 \operatorname{Id} + \O(\epsilon^{7/5})$, we conclude that $F(0)= \O(\Omega^{-2}\epsilon^{7/5})$ with respect to $g^{q_j}_{cf}$. By definition $\Omega^{-2} = r_i^2 \: h_\epsilon^{p_i}$ or $\Omega^{-2} = r_j^2 \: h_\epsilon^{p_j}$ depending on the type of singularity, but in both cases $\Omega^{-2} = \O(r_i^2) = \O(\epsilon^{4/5})$ on the gluing region.

We conclude that $F(0) = \O(\epsilon^{11/5})$ on the gluing region. These errors are measured with respect to the unweighted norm $g_{cf}$. Using Definition \ref{def:global-analysis:global-metric-and-functions} one can reintroduce the weights and conclude:
\begin{proposition}
	\label{prop:inverse-function-theorem:constant-part}
	The constant term $F(0)$ is of order $\O\left(\epsilon^{\frac{11 - 2 \delta}{5}}\right)$ with respect to $(C^{k, \alpha}_{\delta}(M_{B,n})\cdot \omega) \otimes \R^3$.
\end{proposition}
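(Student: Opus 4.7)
The plan is to reduce the claim to a pointwise estimate on the gluing annuli and then track the three sources of $\epsilon$-dependence: the hyperk\"ahler defect of $\omega$, the conformal factor $\Omega^{-2}$, and the weight $e^{-\delta \rho}$. First, observe that outside the gluing regions $\{r_i \in [4\epsilon^{2/5}, 5\epsilon^{2/5}]\}$ and $\{r_j \in [4\epsilon^{2/5}, 5\epsilon^{2/5}]\}$, Theorem \ref{thm:almost-hk:global-symplectic-triple}(2) gives that $\omega$ is a genuine \hk triple, so $\Tf(\omega \wedge \omega) = 0$ and hence $F(0) = 0$ there. It therefore suffices to estimate the weighted norm of $F(0)$ on the gluing annuli.

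On these annuli, I would use the local formula
$$
\Lambda^{-1} \Tf(P \otimes \mu) = \sum_{ij}\Bigl( \operatorname{Id} - \tfrac{1}{3} \operatorname{tr}(P)\, P^{-1} \Bigr)_{ij} \omega_j \otimes e_i
$$
with $\omega_i \wedge \omega_j = P_{ij} \mu$. By parts (3) and (4) of Theorem \ref{thm:almost-hk:global-symplectic-triple}, taking $\mu = \operatorname{Vol}^{g^{q_j}}$ or $\operatorname{Vol}^{g^{p_i}}$, we have $P = 2\operatorname{Id} + \O(\epsilon^{7/5})$, so that $\Lambda^{-1} \Tf(\omega \wedge \omega) = \O(\epsilon^{7/5})$ in the unweighted $C^0_{cf}$-norm, with $\omega$ providing a bounded frame.

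Next, I would pick up the factor $\Omega^{-2}$. By Definitions \ref{def:almost-hk:model-metric-pi} and \ref{def:almost-hk:model-metric-qj}, $\Omega^{-2} = r_i^2\, h^{p_i}_\epsilon$ or $r_j^2\, h^{q_j}_\epsilon$; the second factor is bounded on the gluing region, so $\Omega^{-2} = \O(r^2) = \O(\epsilon^{4/5})$ there. Combining these two estimates yields $F(0) = \O(\epsilon^{11/5})$ in the unweighted $C^0_{cf}$-norm on the gluing annuli.

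Finally, I would convert to the weighted norm. Since $\rho = \log r_i$ or $\log r_j$ on the gluing region with $r \sim \epsilon^{2/5}$, the weight satisfies $e^{-\delta \rho} = r^{-\delta} = \O(\epsilon^{-2\delta/5})$. Multiplying with the previous bound gives
$$
\| F(0) \|_{C^0_\delta(M_{B,n}) \cdot \omega \otimes \R^3} = \O\bigl(\epsilon^{(11 - 2\delta)/5}\bigr),
$$
which is the $k=0$ case. For the higher-order parts of the $C^{k,\alpha}_\delta$-norm, Remark \ref{rem:higher-derivatives} guarantees that covariant derivatives with respect to $g_{cf}$ do not change the order of decay of the functions involved (and the same is true for $\omega$ and $\Omega^{-2}$, since they are built from the model data), so the same bound propagates to all $k$ and to the H\"older seminorm. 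I expect no serious obstacle: the only care needed is to verify that all quantities appearing in the chain $\Tf \circ (\omega\wedge\omega) \mapsto \Lambda^{-1}(\cdot) \mapsto \Omega^{-2}(\cdot)$ indeed have uniformly bounded $g_{cf}$-derivatives on the gluing annuli, which follows from the explicit nature of the interpolation and the elliptic regularity framework of Definition \ref{def:global-analysis:global-metric-and-functions}.
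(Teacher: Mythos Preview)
Your proposal is correct and follows essentially the same approach as the paper: restrict to the gluing annuli where $\Tf(\omega\wedge\omega)$ is supported, use the local formula for $\Lambda^{-1}\Tf$ together with Theorem \ref{thm:almost-hk:global-symplectic-triple} to get the $\O(\epsilon^{7/5})$ defect, multiply by $\Omega^{-2}=\O(\epsilon^{4/5})$, and then reinsert the weight $e^{-\delta\rho}=r^{-\delta}=\O(\epsilon^{-2\delta/5})$. Your explicit treatment of the weight and of the higher derivatives via Remark \ref{rem:higher-derivatives} is slightly more detailed than the paper's, which simply invokes Definition \ref{def:global-analysis:global-metric-and-functions} and the general remark that all error terms are exponential in $\rho$.
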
 

\subsection{The linearised equation}
\label{subsec:linearised-eq}
Next we study the linearized part of $F$, which is given in Equation \eqref{eq:linearized-equation}. In order to apply the inverse function theorem, we need that 
\begin{enumerate}
	\item the operator in Equation \eqref{eq:laplacian-with-correct-domains} is invertible,
	\item the operator in Equation \eqref{eq:laplacian-with-correct-domains} has a uniformly bounded inverse, and 
	\item The error term $2 \Omega^{-2}\nabla^g_{\nabla^g u_{ij}} \omega_j$ in Equation \eqref{eq:hk-triple:Weitzenbock-formula} is sufficiently small.
\end{enumerate}
According to Theorem 1.6 in \mycite{Salm001} and Theorem \ref{thm:appendix:isomorphism}, the first condition is satisfied if $\delta < 0$ and $|\delta|$ sufficiently small. The last condition can be checked explicitly using the Koszul formula. Indeed, using the estimates in Theorem \ref{thm:almost-hk:global-symplectic-triple} --- i.e. $g = g^{p_i} + \O(\epsilon^{11/5})$, $\omega_i = \omega_i^{p_i} + \O(\epsilon^{11/5})$ and $\Omega^{2} = \O(\epsilon^{-4/5})$ with respect to $g_{cf}$ --- and the fact that $\nabla^{g^{p_i}} \omega^{p_i} = 0$, one can write $\Omega^{-2} \nabla^g u_{ij}$ and $\nabla^g \omega_j$ in local coordinates, and get expressions of the form
\begin{align*}
	\Omega^{-2} \nabla^g u_{ij} =& \sum_{\mu \nu} \left(\Omega^2 g\right)^{-1}_{\mu \nu} \: \frac{\del u_{ij}}{\del x^\mu} \del_\nu = \O(1) \\
	2\nabla^g_{\del_{\mu}} \omega_j (\del_\nu, \del_\rho) 
	=& 
	2 \frac{\del \omega_j(\del_\nu, \del_\rho)}{\del x_\mu} 
	+ 2 \omega_j(\nabla^g_{\del_\mu} \del_\nu, \del_\rho) + \ldots \\
	% =& 
	% 2 \frac{\del \omega_j(\del_\nu, \del_\rho)}{\del x_\mu} 
	% + \sum_{ \sigma} \Gamma^{\sigma}_{\mu \nu }\omega_j(\del_\sigma, \del_\rho) + \ldots \\
	=& 
	\frac{\del \omega_j(\del_\nu, \del_\rho)}{\del x_\mu} 
	+ \sum_{\sigma \tau} (g^{-1})_{\sigma \tau} \omega_j(\del_\sigma, \del_\rho) \frac{\del g_{\mu \tau}}{ \del x^{\nu}} + \ldots \\
	=& 2\nabla^{g^{p_i}}_{\del_{\mu}} \omega^{p_i}_j (\del_\nu, \del_\rho) + \O(\epsilon^{7/5}) = \O(\epsilon^{7/5}).
\end{align*}
Therefore, we only need to show that the linearized operator has a uniformly bounded inverse, for which we will spend the rest of this section.
\begin{theorem}
	\label{thm:global-analysis:bounded-inverse}
	Let $\delta \in (-1, 0)$ (with $|\delta|$ sufficiently small if $B \not = \R^3$), $k \in \N_{\ge 2}$, and $\alpha \in (0,1)$.
	There exist $\epsilon_0, C > 0$ such that for any collapsing parameter $\epsilon \in (0, \epsilon_0)$ and $u \in C^{k, \alpha}_{\delta}(M_{B,n})$ (or $u + \lambda\phi \in C^{k, \alpha}_{\delta}(M_{B,n}) \oplus \R \phi$ when $B \not = \R^3$),
	\begin{align*}
		\|u\|_{C^{k, \alpha}_{\delta}(M_{B,n})} \le& C \: \|\Omega^{-2} \Delta^g u\|_{C^{k-2,\alpha}_{\delta}}
		&& \text{if } B  = \R^3\\
		\|u\|_{C^{k, \alpha}_{\delta}(M_{B,n})} + |\lambda| \le& C \: \|\Omega^{-2} \Delta^g(u + \lambda \phi)\|_{C^{k-2,\alpha}_{\delta}} 
		&& \text{otherwise }.
	\end{align*}
\end{theorem}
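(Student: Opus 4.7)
The plan is to prove Theorem \ref{thm:global-analysis:bounded-inverse} by a blow-up/contradiction argument, using the uniform local elliptic regularity supplied by Proposition \ref{prop:global-analysis:strict-ellipticity} together with Liouville-type vanishing results on the smooth geometric limits of $(M_{B,n}, g_{cf})$ as $\epsilon \to 0$. If the estimate fails, after normalization there exist sequences $\epsilon_n \to 0$ and $w_n = u_n + \lambda_n \phi$ (with $\lambda_n = 0$ when $B = \R^3$) with $\|u_n\|_{C^{k,\alpha}_\delta} + |\lambda_n| = 1$ and $\|\Omega^{-2} \Delta^{g_{\epsilon_n}} w_n\|_{C^{k-2,\alpha}_\delta} \to 0$. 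Passing to a subsequence one can assume $\lambda_n \to \lambda_\infty$, and one picks concentration points $p_n \in M_{B,n}$ at which $e^{-\delta \rho(p_n)} |u_n(p_n)| \to 1$; by pigeonhole, after a further subsequence the $p_n$ lie in one fixed region of the decomposition of Definition \ref{def:global-analysis:global-metric-and-functions}.

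Each of the five regions was designed precisely so that the pointed spaces $(M_{B,n}, g_{cf}, p_n)$ admit a smooth pointed limit: the full Taub-NUT space $(\R^4, g^{TN})$, the Atiyah-Hitchin manifold, the cylindrical model $g^{p_i}_{cf}$ or $g^{q_j}_{cf}$ near a singularity (where the circle fibre collapses as $\epsilon \to 0$), a compact portion of the flat bulk orbifold $B'/\Z_2$, or its asymptotic end. Define $v_n := e^{-\delta \rho(p_n)} u_n$; then $|v_n(p_n)| \to 1$, $|v_n(x)| \le e^{\delta(\rho(x) - \rho(p_n))}$, and $\Omega^{-2} \Delta^{g_{\epsilon_n}} v_n \to 0$ locally uniformly. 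The Schauder estimates underlying Proposition \ref{prop:global-analysis:strict-ellipticity}, applied on the local universal covers on which $g_{cf}$ has uniformly bounded geometry, yield uniform $C^{k,\alpha}_{cf}$-bounds for $v_n$ on compact subsets. Arzelà-Ascoli then extracts a subsequential limit $v_\infty$ which is harmonic on the pointed limit space, satisfies $|v_\infty(p_\infty)| = 1$, and inherits the growth control $|v_\infty(x)| \le C e^{\delta(\rho_\infty(x) - \rho_\infty(p_\infty))}$.

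The weights $\rho$ in Definition \ref{def:global-analysis:global-metric-and-functions} were chosen so that this inherited bound forces $v_\infty \to 0$ at infinity of each limit space: for Taub-NUT it becomes $|v_\infty| \le C r_{TN}^\delta$ with $\delta < 0$, and similarly on Atiyah-Hitchin and on the asymptotic end of $B/\Z_2$; on the cylindrical model the bound is exponential in $\rho$, forcing decay along the cylinder. A Liouville-type theorem then contradicts $|v_\infty(p_\infty)| = 1$: on Taub-NUT and Atiyah-Hitchin one uses separation of variables along the asymptotic $S^1$-bundle, combined with the positive first eigenvalue of the Laplacian on the cross-section, to eliminate decaying harmonic functions; on the cylindrical model and on the compact bulk the maximum principle together with the decay at the boundaries (inherited from the adjacent regions) gives $v_\infty \equiv 0$; and on the non-compact orbifold $B'/\Z_2$ one invokes the uniqueness statement of Lemma \ref{lem:setup:harmonic-function}(d), after subtracting $\lambda_\infty \phi$ to absorb the unique logarithmically ($B = \R^2 \times S^1$) or linearly ($B = \R \times T^2$) growing harmonic function at infinity.

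The central obstacle is the Liouville step on the non-compact orbifold limit $B'/\Z_2$ coupled with the correct handling of the $\R \phi$ summand: when $\lambda_\infty \not= 0$ one must show that the limit of $u_n$ is compatible with the growth of $\phi$, and that after subtracting $\lambda_\infty \phi$ the residual harmonic function satisfies the hypotheses of Lemma \ref{lem:setup:harmonic-function}(d). A related technical point is ruling out concentration in the transition annuli between regions of Definition \ref{def:global-analysis:global-metric-and-functions}: here one uses that the metric $g_{cf}$ and weight $\rho$ are interpolated in a controlled way between the two adjacent regions, so that the local limits and Liouville apply there as well. Finally, upgrading the contradiction from $C^0$ to $C^{k,\alpha}_\delta$ is handled by standard interior Schauder applied to the rescaled functions, using the uniform geometry of $g_{cf}$.
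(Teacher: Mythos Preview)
Your overall strategy --- contradiction, concentration points, Arzel\`a--Ascoli, and a Liouville step on the limit --- is exactly what the paper does. However, the paper organises the argument into only \emph{two} cases (the concentration points $x_i$ either satisfy $r_j(x_i)\to 0$ for some singularity, or stay bounded away from all singularities), and there are two substantive points where your proposal diverges from what actually makes the argument work.

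\textbf{The collapsed (bulk) limit.} When the concentration points stay in the bulk region, the metric $g_{cf}=g^{GH}$ is not converging to a smooth $4$-dimensional pointed limit: the circle fibres have length of order $\epsilon$ and the limit is the $3$-dimensional base $B$. You cannot simply say that $v_n$ converges to a harmonic function on $B'/\Z_2$; the paper instead replaces $u_i$ by its fibrewise $S^1$-average $\tilde u_i$, for which $\Delta^{GH}\tilde u_i=\tfrac{1}{h_\epsilon}\Delta^{B}\tilde u_i$. The non-trivial step (Lemma \ref{lem:global-analysis:bounded-inverse-transformation-GH}) is then to show that the lower bound $\|\tilde u_i\|_{C^0_\delta(\{x_i\})}+|\lambda_i|>c$ survives the averaging; this uses the fundamental theorem of calculus along the fibre and the fact that the fibre length is $\O(\epsilon)$ to bound $u_i\chi_i-\tilde u_i$. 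Your proposal does not address this, and without it the limit function could be identically zero from the start.

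\textbf{The Liouville step on the base.} Your appeal to Lemma \ref{lem:setup:harmonic-function}(d) is misplaced: that lemma characterises solutions of $\Delta\tilde h=-8\pi\sum_j\delta(x-q_j)+2\pi\sum_i(\delta(x-p_i)+\delta(x+p_i))$, whereas the limit function $\tilde u+\lambda\phi$ is genuinely harmonic on $B$ with \emph{no} sources (after a removable-singularity argument at the punctures, which the paper carries out using the growth bound $r_j^{-\tilde\delta}\tilde u\in C^0$). Moreover, $\phi$ is \emph{not} harmonic, so ``subtracting $\lambda_\infty\phi$'' does not leave a harmonic remainder. The paper's argument is: $\tilde u+\lambda\phi$ is a $\Z_2$-invariant harmonic function on $B$ of order $\O(\rho)$, hence an affine function, hence constant by $\Z_2$-invariance; since $\phi$ is unbounded this forces $\lambda=0$, and then decay forces $\tilde u=0$.

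Finally, for the near-singularity case the paper does not treat the cylindrical region as a separate limit: because $g^{q_j}_{cf}=g^{TN'}_{cf}$ is $\epsilon$-independent in Atiyah--Hitchin coordinates, the entire region $\{r_j\le R_2\}$ is embedded into the fixed Atiyah--Hitchin manifold, and the Liouville step there is just the maximum principle (a decaying harmonic function on a complete manifold vanishes), not separation of variables.
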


The proof of this theorem can be split into the following steps.
\begin{enumerate}
	\item Assume that there is no uniformly bounded inverse. There must be a sequence of functions $u_i$ and a sequence $\epsilon_i > 0$, such that $u_i$ has norm one, but $\Delta u_i$ and $\epsilon_i$ converge to zero\footnote{When $\epsilon$ does not tend to zero, the operator $\Omega^{-2} \Delta^g$ is continuous in $\epsilon$ and hence, the existence of the uniformly bounded inverse can be done by taking limits. We only need to consider the non-trivial case, when the collapsing parameter $\epsilon$ tends to zero.}.
	\item Using regularity estimates, construct a sequence of points $x_i$ at which the functions $|u_i|$ are uniformly bounded below, away from zero.
	\item Modify the functions $u_i$, such that their domain is on a fixed limiting space.
	\item Use the Arzela–Ascoli theorem to find a subsequence that converges to a non-zero harmonic function $u$.
	\item Argue that the limiting space has no non-zero harmonic functions, and reach a contradiction.
\end{enumerate}
Depending on whether the $x_i$ will concentrate near one of the singularities, we will pick different limiting spaces and apply different transformations to $u_i$. But for each case, we will follow the above steps.
\begin{remark}
	The proof for the case $B = \R^3$ will be a simplified version of the proof for the case $B \not = \R^3$. Hence the rest of this section we only consider the latter case.
\end{remark}

\subsubsection*{Step 1.}
\begin{lemma}
	Suppose that Theorem \ref{thm:global-analysis:bounded-inverse} is false. Then there exist sequences $u_i \in C^{k, \alpha}_\delta(M_{B,n})$, $\lambda_i \in \R$, $\epsilon_i \in (0,1)$ and $c > 0$ such that
	\begin{align*}
		\|u_i\|_{C^{k, \alpha}_\delta(M_{B,n})} + |\lambda_i| &= 1,
		& \|u_i\|_{C^{k, \alpha}_\delta(M_{B,n})} &> c \\
		\|\Omega^{-2} \Delta^g (u_i + \lambda_i \phi)\|_{C^{k-2, \alpha}_\delta(M_{B,n})}  &\to 0, \text{ and}  & \epsilon_i &\to 0.
	\end{align*}
\end{lemma}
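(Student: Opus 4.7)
The plan is a two-step manoeuvre: negate Theorem \ref{thm:global-analysis:bounded-inverse} and then normalise to extract a sequence with the desired properties. The lemma is essentially a diagonalisation exercise, so most of the work amounts to carefully arranging the quantifiers; the only non-trivial point is the uniform lower bound on $\|u_i\|$.

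First I would translate the failure of Theorem \ref{thm:global-analysis:bounded-inverse} into its diagonal form. Applying the negation with $C = i$ and $\epsilon_0 = 1/i$ produces, for every $i \in \mathbb{N}$, some $\epsilon_i \in (0,1/i)$ and a pair $(u_i, \lambda_i)$ (not both zero, and with $u_i \in C^{k,\alpha}_\delta(M_{B,n})$) satisfying
\[
\|u_i\|_{C^{k,\alpha}_\delta(M_{B,n})} + |\lambda_i| > i \cdot \|\Omega^{-2}\Delta^g(u_i + \lambda_i \phi)\|_{C^{k-2,\alpha}_\delta(M_{B,n})}.
\]
Rescaling $(u_i, \lambda_i)$ by the positive factor $\|u_i\|_{C^{k,\alpha}_\delta} + |\lambda_i|$ we may enforce the normalisation $\|u_i\|_{C^{k,\alpha}_\delta} + |\lambda_i| = 1$, whereupon the inequality becomes $\|\Omega^{-2}\Delta^g(u_i + \lambda_i\phi)\|_{C^{k-2,\alpha}_\delta} < 1/i$. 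The convergences $\epsilon_i \to 0$ and $\|\Omega^{-2}\Delta^g(u_i + \lambda_i \phi)\|_{C^{k-2,\alpha}_\delta} \to 0$ are then automatic, giving every conclusion of the lemma except the uniform lower bound $\|u_i\|_{C^{k,\alpha}_\delta} > c$.

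For the latter, I would argue by an internal contradiction. If along a subsequence $\|u_i\|_{C^{k,\alpha}_\delta} \to 0$, the normalisation forces $|\lambda_i| \to 1$. By Proposition \ref{prop:global-analysis:strict-ellipticity} the operator $\Omega^{-2}\Delta^g\colon C^{k,\alpha}_\delta \to C^{k-2,\alpha}_\delta$ is bounded uniformly in $\epsilon$, so $\|\Omega^{-2}\Delta^g u_i\|_{C^{k-2,\alpha}_\delta} \to 0$, and therefore
\[
|\lambda_i|\: \|\Omega^{-2}\Delta^g \phi\|_{C^{k-2,\alpha}_\delta(M_{B,n})} \to 0.
\]
Combined with $|\lambda_i| \to 1$, this would force $\|\Omega^{-2}\Delta^g \phi\|_{C^{k-2,\alpha}_\delta} \to 0$, contradicting a uniform in $\epsilon$ lower bound on this quantity.

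The main obstacle is therefore this uniform lower bound on $\|\Omega^{-2}\Delta^g \phi\|_{C^{k-2,\alpha}_\delta(M_{B,n})}$. I would establish it by working in the asymptotic region where $\phi$ is supported: there $\phi = \rho$ depends only on the fixed base coordinates of $B$ (independent of $\epsilon$), while $\Omega$ and $g$ have the explicit form prescribed by Definition \ref{def:global-analysis:global-metric-and-functions} and depend on $\epsilon$ only through $h_\epsilon = 1 + \epsilon h$. As $\epsilon \to 0$, $\Omega^{-2}\Delta^g \phi$ converges to a fixed element of $C^{k-2,\alpha}_\delta$ attached to the limiting ALG*/ALH* model, and this element is non-zero because $\phi$ lies outside the kernel of the operator in \cite{Salm001}, Theorem 1.6 (which is an isomorphism precisely when $\mathbb{R}\phi$ is adjoined). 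Continuity in $\epsilon$ then provides the required uniform positive lower bound, completing the argument.
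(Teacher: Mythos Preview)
Your proposal is correct and follows the same route as the paper: negate and normalise to get the sequences, then argue by contradiction that if $\|u_i\|_{C^{k,\alpha}_\delta}\to 0$ then $|\lambda_i|\to 1$, forcing $\Omega^{-2}\Delta^g\phi$ to vanish in the limit, which is impossible since $\phi$ is not harmonic. You are in fact more careful than the paper about the $\epsilon$-uniformity of the lower bound on $\|\Omega^{-2}\Delta^g\phi\|_{C^{k-2,\alpha}_\delta}$; the paper simply writes ``hence $\Delta^g\phi=0$'' without spelling out that the metric varies with $\epsilon_i$, whereas your appeal to the explicit $\epsilon$-independent form of $\phi=\rho$ in the asymptotic region and continuity in $\epsilon$ makes this step rigorous.
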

\begin{proof}
	The conditions on the left follow directly from the negation of Theorem \ref{thm:global-analysis:bounded-inverse}. We only need to show $\|u_i\|_{C^{k, \alpha}_\delta(M_{B,n})}$ is bounded below.
	Suppose not, and assume that $\|u_i\|_{C^{k, \alpha}_\delta(M_{B,n})}$ converges to zero.
	Because $|\lambda_i| \le 1$, there must be a converging subsequence with limit $\lambda$. Because $\|u_i\|_{C^{k, \alpha}_\delta(M_{B,n})} + |\lambda_i| = 1$ and $\|u_i\|_{C^{k, \alpha}_\delta(M_{B,n})}$ converges to zero, the limit $\lambda$ must be equal to $\pm1$.
	
	At the same time, $\|\Omega^{-2} \Delta^g (u_i + \lambda_i \phi)\|_{C^{k-2, \alpha}_\delta(M_{B,n})} \to 0$ and hence $\Delta^g \phi = 0$. The function $\phi$ is not a harmonic function, which yields a contradiction. Hence, $\|u_i\|_{C^{k, \alpha}_\delta(M_{B,n})}$ is uniformly bounded away from zero.
\end{proof}

\textbf{Step 2.} Next we study the property $\|u_i\|_{C^{k, \alpha}_\delta(M_{B,n})} > c$ in more detail. Notice that for any Riemannian metric, its $C^{k, \alpha}$-norm can be written as 
$$
\|u\|_{C^{k, \alpha}(U)} = \sup_{x \in U} \left[\sum_{j=0}^k \|\nabla^j u(x) \| + \sup_{\substack{y \in U \\ d(x,y) < \operatorname{InjRad}(x)}} \frac{\| \nabla^k u(x) - \nabla^k u(y) \|}{d(x,y)^\alpha} \right]
$$
which enables us to define a `pointwise norm':
$$
\|u_i\|_{C^{k, \alpha}(\{x\})} := \sum_{j=0}^k \|\nabla^j u(x) \| + \sup_{\substack{y \in U \\ d(x,y) < \operatorname{InjRad}(x)}} \frac{\| \nabla^k u(x) - \nabla^k u(y) \|}{d(x,y)^\alpha}.
$$
With this in mind, we define a weighted `pointwise norm' with respect to $g_{cf}$.
The condition $\|u_i\|_{C^{k, \alpha}_\delta(M_{B,n})} > c$ implies there is a sequence $x_i \in M_{B,n}$ such that $\| u_i \|_{C^{k, \alpha}_\delta(\{x_i\})} > \frac{c}{2} >0$. The sequence of points $x_i$ can behave in two different ways, which is shown in Figure \ref{fig:possible-ways-xi-can-behave}:
\begin{figure}[h!]
\centering
	\begin{tikzpicture}
		\draw[very thick] (0,0) -- (7.5,0);
		\draw[very thick,dashed] (7.5,0) -- (9.5,0);
		\draw[very thick,-{latex}] (9.5,0) -- (12,0) node[below]{$r \to \infty$};
		\draw (0,3pt) -- (0,-3pt) node[below]{$0$};
		\draw (4,3pt) -- (4,-3pt) node[below]{$4 \epsilon^{\frac{2}{5}}$};
		\draw (6,3pt) -- (6,-3pt) node[below]{$5 \epsilon^{\frac{2}{5}}$};
		\draw (3,-3pt) -- (3,3pt) node[above]{\begin{tabular}{c}
				$r_{TN} = R_3$ \\
				$r_{AH} = R_3$ \\
		\end{tabular}};
		\draw (7,-3pt) -- (7,3pt) node[above]{$r_j = R_2$};
		\draw (10,-3pt) -- (10,3pt) node[above]{$r = R_1$};
		\draw [decorate,decoration={brace,amplitude=5pt,raise=9ex}]
				(6.5,0) -- (12.75,0) node[midway,yshift=6em]{\begin{tabular}{c}
						Case 2: \\
						$x_i$ bounded away\\from the singularities \\
						~
				\end{tabular}};
		\draw [decorate,decoration={brace,amplitude=5pt,raise=9ex}]
		(0,0) -- (6.4,0) node[midway,yshift=6em]{\begin{tabular}{c}
				Case 1: \\
				$x_i$ concentrate near singularity\\
				~
		\end{tabular}};
	\end{tikzpicture}
	\caption{The two possible ways $x_i$ can behave.}
	\label{fig:possible-ways-xi-can-behave}
\end{figure}
\begin{enumerate}
	\item The sequence $x_i$ \textit{concentrates near a singularity}. That is, there is a subsequence of $x_i$ such that the radial coordinate $r_i$ or $r_j$ at $x_i$ converges to zero.
	\item The sequence $x_i$ \textit{is bounded away from the singularities}. That is, there is a subsequence of $x_i$ such that the radial coordinate at $x_i$ is uniformly bounded below.
\end{enumerate}
At least one of these cases must happen, and we study them separately.

\begin{remark}
	The case when $x_i$ concentrate near a non-fixed point singularity $p_i$, is similar to the case when $x_i$ concentrate near a fixed point singularity $q_j$. Therefore, we only explain the latter case.
\end{remark}

\subsubsection*{Case 1: $x_i$ concentrates near a singularity.}
\textbf{Step 3.} We consider the case when $x_i$ concentrates near a singularity. In this case $u_i|_{\{r_j \le 2 r_j(x_i)\}}$ is uniformly bounded away from zero in the $C^{k, \alpha}_\delta$ norm. At the same time, $\{r_j \le 2 r_j(x_i)\}$ can be viewed as a subset of the Atiyah-Hitchin manifold.
Therefore, we use the Atiyah-Hitchin manifold as our limiting space.

To make our contradiction argument work, we need the norms, operators, and weights on the limiting space to be independent of $\epsilon$. We constructed $g_{cf}$ such that this is true. We also chose $\Omega$ such that $\Omega^{-2} \Delta^g$ is independent of $\epsilon$. However, the radial parameter $\rho$ does depend on $\epsilon$. To solve this we define a new radial parameter $\rho_{AH} := \rho - \log \left(\frac{\epsilon}{1 + \epsilon \alpha_j}\right)$ that is independent of $\epsilon$, and we equip the Atiyah-Hitchin manifold with the weighted norm
$$
\|u\|_{C^{k, \alpha}_{\delta}(AH)} = \|e^{- \delta \rho_{AH} }u\|_{C^{k, \alpha}_{g_{cf}}(AH)}.
$$
Luckily, the weighted operator $L_\delta$ is the same whether we use $\rho$ or $\rho_{AH}$.

Next, we will restrict $u_i$ such that it is fully supported on the Atiyah-Hitchin manifold. For this we consider the family of smooth step functions $\chi_i$ on $M_{B,n}$ that are equal to $1$ when $r_j \le 2 r_j(x_i)$ and equal to $0$ when\footnote{$R_2$ is defined in Definition \ref{def:global-analysis:global-metric-and-functions}.} $r_j \ge R_2$. Then $u_i \cdot \chi_i$ are compactly supported functions on the Atiyah-Hitchin manifold. Because $u_i \in C^{k, \alpha}_\delta(M_{B,n})$ is equivalent to $e^{- \delta \rho} u_i \in C^{k, \alpha}_{cf}(M_{B,n})$, we get
$$
e^{- \delta \: \rho} u_i = e^{- \delta \:\rho_{AH}} \left(\frac{\epsilon}{1 + \epsilon \alpha_j}\right)^{- \delta} u_i \in C^{k, \alpha}_{cf}(M_{B,n}).
$$
With this insight, we consider a new sequence of functions $\tilde u_i := \chi_i \cdot \left(\frac{\epsilon}{1 + \epsilon \alpha_j}\right)^{- \delta} u_i$ defined on the Atiyah-Hitchin manifold. In the following lemma, we show that $\tilde u_i$ has the same properties as $u_i$:

\begin{lemma}
	\label{lem:global-analysis:bounded-inverse-transformation-AH}
	Suppose that Theorem \ref{thm:global-analysis:bounded-inverse} is false and that $x_i$ concentrate near a singularity $q_j$. Let $AH$ be the Atiyah-Hitchin manifold. Then the sequence $\tilde u_i := \chi_i \cdot \left(\frac{\epsilon}{1 + \epsilon \alpha_j}\right)^{- \delta} u_i \in C^{k, \alpha}_\delta(AH)$ satisfy
	\begin{align*}
		\frac{c}{2}< \|\tilde u_i\|_{C^{k, \alpha}_\delta(AH)} \le 1, \text{ and } 
		\|\Omega^{-2} \Delta^g \tilde u_i\|_{C^{k-2, \alpha}_\delta(AH)}  &\to 0.
	\end{align*}
\end{lemma}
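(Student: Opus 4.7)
The plan is to exploit the fact that, on the neighbourhood $\{r_j \le R_2\}$ of the fixed-point singularity $q_j$, the conformally rescaled metric $g_{cf}$, the weight function $\Omega$ from Definition~\ref{def:global-analysis:global-metric-and-functions}, and the operator $\Omega^{-2}\Delta^g$ are all $\epsilon$-independent and coincide with their counterparts on the Atiyah-Hitchin manifold under the identification $r_j = \frac{\epsilon}{1+\epsilon\alpha_j}r_{AH}$. The two radial weights differ only by the constant $\log(\epsilon/(1+\epsilon\alpha_j))$, which is precisely what the prefactor $(\epsilon/(1+\epsilon\alpha_j))^{-\delta}$ in the definition of $\tilde u_i$ is designed to cancel, via the identity $e^{-\delta\rho_{AH}}\tilde u_i = \chi_i\, e^{-\delta\rho}u_i$. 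Moreover, since $\phi \equiv 0$ on $\{r_j \le R_2\}$, the auxiliary term $\lambda_i \phi$ plays no role in the region supporting $\chi_i$, so on this region $\Delta^g(u_i+\lambda_i\phi) = \Delta^g u_i$.

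The first step is to choose $\chi_i$ as a smooth function of the radial variable $\rho_{q_j} = \log r_j$, equal to $1$ on $\{\rho_{q_j} \le \log(2 r_j(x_i))\}$ and $0$ on $\{\rho_{q_j} \ge \log R_2\}$. Since $\rho_{q_j}$ has unit-speed derivative in $g^{q_j}_{cf}$ and the transition interval has length $\log R_2 - \log(2r_j(x_i)) \to +\infty$ as $r_j(x_i)\to 0$, I can arrange that $\|\nabla^l \chi_i\|_{g_{cf}}$ tends to zero uniformly for all $l \le k$.

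With this choice, the upper bound follows from the Leibniz rule applied to $\chi_i\, e^{-\delta\rho}u_i$: the derivatives of $\chi_i$ contribute only $o(1)$-small terms, so $\|\tilde u_i\|_{C^{k,\alpha}_\delta(AH)} \le (1+o(1))\|u_i\|_{C^{k,\alpha}_\delta(M_{B,n})} \le 1$ for $i$ sufficiently large. For the lower bound, I would evaluate at $x_i$, where $\chi_i \equiv 1$ identically in a neighbourhood so all its derivatives vanish at $x_i$; the rescaling prefactor then exactly cancels the discrepancy between the two weight systems, yielding
\[
\|\tilde u_i\|_{C^{k,\alpha}_\delta(\{x_i\};AH)} = \|u_i\|_{C^{k,\alpha}_\delta(\{x_i\};M_{B,n})} > c/2.
\]

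To prove $\|\Omega^{-2}\Delta^g \tilde u_i\|_{C^{k-2,\alpha}_\delta(AH)} \to 0$, I would expand by Leibniz to obtain three terms, all multiplied by the prefactor $(\epsilon/(1+\epsilon\alpha_j))^{-\delta}$: the bulk term $\chi_i\,\Omega^{-2}\Delta^g u_i$, and the commutator terms $-2\,\Omega^{-2}g(\nabla\chi_i,\nabla u_i)$ and $\Omega^{-2}(\Delta^g\chi_i)u_i$. After the weight swap, the bulk term matches $\chi_i\cdot\Omega^{-2}\Delta^g(u_i+\lambda_i\phi)$ measured in $C^{k-2,\alpha}_\delta(M_{B,n})$ (using $\phi \equiv 0$), which tends to zero by hypothesis. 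The commutator terms are supported in the annulus $\{\log(2r_j(x_i)) \le \rho_{q_j} \le \log R_2\}$, where our choice of $\chi_i$ makes all $g_{cf}$-derivatives of $\chi_i$ uniformly small while $u_i$ remains controlled in $C^{k,\alpha}_\delta$. The main technical obstacle is to ensure these last estimates are uniform in $\epsilon_i$ in the full $C^{k-2,\alpha}_\delta$ topology, which ultimately rests on the $\epsilon$-uniform bounded geometry of $g_{cf}$ on the transition annulus --- a feature built into Definition~\ref{def:global-analysis:global-metric-and-functions}.
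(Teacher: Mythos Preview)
Your proposal is correct and follows essentially the same approach as the paper: you identify the weight-swap identity $e^{-\delta\rho_{AH}}\tilde u_i = \chi_i\, e^{-\delta\rho}u_i$, choose $\chi_i$ as a function of $\log r_j$ so that its $g_{cf}$-derivatives decay like $(\log R_2 - \log(2r_j(x_i)))^{-1}$, use $\phi\equiv 0$ on $\operatorname{supp}\chi_i$, and then handle the three Leibniz terms exactly as the paper does. If anything, you are more explicit than the paper about why the prefactor $(\epsilon/(1+\epsilon\alpha_j))^{-\delta}$ is the right one and about the pointwise lower bound at $x_i$.
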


\begin{proof}
	Firstly, we only modified $u_i$ outside of the region where $x_i$ concentrates and hence $\|\tilde u_i\|_{C^{k, \alpha}_\delta(AH)}$ is bounded below by $\frac{c}{2}$.
	
	Secondly, the step function $\chi_i$ is chosen such that $\d \chi_i$ and its derivatives are of order $(\log(R_2) -\log(2 r_j(x_i)))^{-1}$ with respect to $g_{cf}$ and this converges to zero. Hence,
	$$
	\|\tilde u_i\|_{C^{k, \alpha}_{\delta}(AH)} 
	% = \|\chi_i \cdot  u_i\|_{C^{k, \alpha}_{\delta}(M_{B,n})}
	\le \|\chi_i\|_{C^{k, \alpha}_{cf}(M_{B,n})} \cdot \| u_i\|_{C^{k, \alpha}_{\delta}(M_{B,n})} 
	% = \| u_i\|_{C^{k, \alpha}_{\delta}(M_{B,n})} 
	\le 1.
	$$

	Finally, to estimate $\|\Omega^{-2} \Delta^g \tilde u_i\|_{C^{k-2, \alpha}_\delta(AH)}$ notice that on the support of $\chi_i$ the function $\phi$ is identically zero and
	$$
	\Omega^{-2} \Delta^g(\chi_i \cdot u_i) = \chi_i \cdot \Omega^{-2} \Delta^g(u_i) + u_i \cdot \Omega^{-2} \Delta^g(\chi_i) - 2 \Omega^{-2} \langle \d \chi_i, \d u_i \rangle_{g}.
	$$
	Using that $g$ and $g^{q_j}$ are equivalent norms and that $\d\chi_i$  is decaying, we estimate
	\begin{align*}
		\|\Omega^{-2} \Delta^g(\tilde u_i)\|_{C^{k, \alpha}_{\delta}(AH)}
		\le& \|\chi_i\|_{C^{k, \alpha}_{cf}(M_{B,n})} \cdot \|\Omega^{-2} \Delta^g(u_i)\|_{C^{k, \alpha}_{\delta}(M_{B,n})} \\		
		 & \qquad+ \|u_i\|_{C^{k, \alpha}_{\delta}(M_{B,n})} \cdot \|\Omega^{-2} \Delta^g(\chi_i)\|_{C^{k, \alpha}_{cf}(M_{B,n})} \\
		 & \qquad + \O\left(\frac{1}{ -\log( r_j(x_i))}\right).
	\end{align*}
	By Proposition \ref{prop:global-analysis:strict-ellipticity}, $\Omega^{-2} \Delta^g \chi_i$ is uniformly bounded by $\| \d \chi_i \|_{C^{k, \alpha}_{cf}(M_{B,n})}$ and so $\|\Omega^{-2} \Delta^g \tilde u_i\|_{C^{k-2, \alpha}_\delta(AH)}$ converges to $0$.
\end{proof}

\textbf{Step 4.} Using the Arzela-Ascoli theorem, there exists a subsequence of $\tilde{u}_i$ which converges to some $\tilde u \in C^0_\delta(K)$ for any compact set $K$. We restrict $\tilde{u}_i$ to this subsequence. According to Theorem 1.5 in \cite{Salm001}, there is a uniform constant $ C > 0$ and a compact set $K \subset AH$ such that for any $i,j \in \N$,
\begin{align*}
	\| \tilde u_i - \tilde u_j\|_{C^{k,\alpha}_{\delta}(AH)} \le& C \left[
	\| \Omega^{-2}\Delta^{g} (\tilde u_i - \tilde u_j)\|_{C^{k-2, \alpha}_{\delta}(AH)}
	+ \| \tilde u_i - \tilde u_j\|_{C^0_{\delta}(K)} 
	\right].
\end{align*}
Therefore,  $\tilde u_i$ is a Cauchy sequence in $C^{k, \alpha}_\delta(AH)$ and its limit is $\tilde u \in C^{k, \alpha}_\delta(AH)$.

\textbf{Step 5.} This limiting function $\tilde u$ is harmonic because $\Omega^{-2} \Delta$ is a continuous operator. By assumption, $\delta < 0$, and hence $\tilde u$ must be decaying. By the maximum principle $\tilde u$ must vanish everywhere.
We conclude $\|\tilde{u}_i\|_{C^{k, \alpha}_{\delta}(AH)}$ converges to zero, which contradicts the fact that $\|\tilde{u}_i\|_{C^{k, \alpha}_{\delta}(AH)} > c/2 > 0$.
Therefore, the sequence $x_i$ cannot concentrate near the singularities.

\subsubsection*{Case 2: $x_i$ is bounded away from the singularities}
\textbf{Step 3.} Next we consider the case in which $x_i$ is bounded away from the singularities. Again we need to modify $u_i$, such that their domain is defined on a fixed limiting space. The points $\{x_i\}$ lie inside the circle bundle $P$ on which the Gibbons-Hawking metric is defined.
The radius of the fibres of $P$ is $\O(\epsilon)$, and hence we expect that, in the limit $\epsilon_i \to 0$, $P$ collapses to its base space $B'$. At the same time, the non-complete regions of $B'$ shrink at rate $\O(\epsilon^{2/5})$, and hence we pick the flat space $B$ as our limiting space.

Next we modify $u_i$ such that they are well-defined on the limiting space.
Because the points $\{x_i\}$ are bounded away from the singularities, there is a constant $R_B$ such that $r_j(x_i)>R_B$. Therefore, consider the family of smooth step functions $\chi_i$ on $M_{B,n}$ that are equal to 1 when $r_i, r_j \ge R_B$ and equal to zero when $r_i,r_j \le 5 \epsilon^{2/5}$. We consider a new sequence of functions $\tilde{u}_i$, that is the $S^1$-invariant component of $u_i \cdot \chi_i$ on the circle bundle $\pi \colon P \to B'$, i.e. 
$$
\tilde{u}_i(x) := \frac{1}{2 \pi} \int_{\pi^{-1}(x)} u_i \: \chi_i \cdot \eta
$$
where $\eta$ is the connection on $P$. In the following lemma, we show that $\tilde u_i$ has the same properties as $u_i$:

\begin{lemma}
	\label{lem:global-analysis:bounded-inverse-transformation-GH}
	Suppose that Theorem \ref{thm:global-analysis:bounded-inverse} is false and $x_i$ are bounded away from the singularities. Then, the sequence $\tilde u_i$ satisfy
	\begin{align*}
		c< \|\tilde u_i\|_{C^{0}_\delta(\{x_i\})} + |\lambda_i| \le 2, \text{ and } 
		\|\Omega^{-2} \Delta^g (\tilde u_i + \lambda_i \phi)\|_{C^{0, \alpha}_\delta(P)}  &\to 0.
	\end{align*}
	for some constant $c > 0$.
\end{lemma}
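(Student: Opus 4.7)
Pulled back to $P$, the function $\tilde u_i$ is the fiber average $A(\chi_i u_i)$, where $A v(x):=\tfrac{1}{2\pi}\int_{\pi^{-1}(x)} v\,\eta$ is an $S^1$-invariant operator. Since $\phi$ is $S^1$-invariant and we will choose $\chi_i \equiv 1$ on $\mathrm{supp}(\phi)$, we have the convenient identity $\tilde u_i+\lambda_i\phi = A\bigl(\chi_i(u_i+\lambda_i\phi)\bigr)$. The key structural observation is that $S^1$ acts by isometries of $g$, so $A$ commutes with $\Omega^{-2}\Delta^g$. We follow the general template of Lemma \ref{lem:global-analysis:bounded-inverse-transformation-AH}, only now with a logarithmic cutoff chosen near the singularities and with the limiting space being $B'$ (into which $P$ collapses) instead of the Atiyah–Hitchin manifold.

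\textbf{Upper bound.} In the Gibbons–Hawking region where $x_i$ lives, the radial weight $\rho$ is constant on fibers (it depends only on the base), and $\chi_i$ is $S^1$-invariant. Therefore
\[
|e^{-\delta\rho(x_i)}\tilde u_i(x_i)|
\le \chi_i(x_i)\sup_{y\in \pi^{-1}(x_i)}|e^{-\delta\rho(y)}u_i(y)|
\le \|u_i\|_{C^0_\delta(M_{B,n})}\le 1,
\]
and $|\lambda_i|\le 1$, yielding the upper bound $2$.

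\textbf{Lower bound (the main obstacle).} The pointwise hypothesis only gives $\|u_i\|_{C^{k,\alpha}_\delta(\{x_i\})}>c/2$, which bounds a combination of values, derivatives and H\"older seminorms, not $|u_i(x_i)|$ itself. We resolve this in two steps. First, by the uniform interior Schauder estimate of \cite{Salm001} applied on a $g_{cf}$-ball $B_1(x_i)$ together with the hypothesis $\Omega^{-2}\Delta^g u_i\to 0$, the pointwise $C^{k,\alpha}_\delta$ bound at $x_i$ forces $\|u_i\|_{C^0_\delta(B_1(x_i))}$ to be uniformly bounded below; hence there is a point $y_i\in B_1(x_i)$ with $|e^{-\delta\rho(y_i)}u_i(y_i)|$ bounded away from zero, and this point is still bounded away from the singularities. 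Second, the fiber $\pi^{-1}(y_i)$ has $g_{cf}=g^{GH}$-length $2\pi\epsilon/\sqrt{h_\epsilon(y_i)}=O(\epsilon)$, while $\|\nabla u_i\|_{g_{cf}}$ is bounded by the norm of $u_i$. Thus $u_i$ oscillates by at most $O(\epsilon)$ along $\pi^{-1}(y_i)$, and $|A u_i(y_i)-u_i(y_i)|=O(\epsilon)$. Combining the two steps, $|\tilde u_i(y_i)|=\chi_i(y_i)|A u_i(y_i)|$ is bounded below for $\epsilon$ small. Relabeling $y_i$ as $x_i$ preserves every earlier property used (being bounded away from the singularities; being a point where $|u_i|$ is large), and gives $\|\tilde u_i\|_{C^0_\delta(\{x_i\})}>c$.

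\textbf{Laplacian bound.} Commuting $\Omega^{-2}\Delta^g$ past $A$ using $S^1$-invariance,
\[
\Omega^{-2}\Delta^g(\tilde u_i+\lambda_i\phi) = A\!\left[\chi_i\,\Omega^{-2}\Delta^g(u_i+\lambda_i\phi)+ (u_i+\lambda_i\phi)\,\Omega^{-2}\Delta^g\chi_i- 2\,\Omega^{-2}g\bigl(\nabla\chi_i,\nabla(u_i+\lambda_i\phi)\bigr)\right].
\]
The first term is controlled by $\|\Omega^{-2}\Delta^g(u_i+\lambda_i\phi)\|_{C^{k-2,\alpha}_\delta}\to 0$. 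For the cutoff error, pick $\chi_i$ to be a logarithmic cutoff in $\rho_{p_i}=\log r_i$ (resp.\ $\rho_{q_j}=\log r_j$), interpolating between $1$ at $r_i,r_j\ge R_B$ and $0$ at $r_i,r_j\le 5\epsilon^{2/5}$, exactly as in Lemma \ref{lem:global-analysis:bounded-inverse-transformation-AH}. Then $\|\nabla\chi_i\|_{g_{cf}}$ and $\|\Omega^{-2}\Delta^g\chi_i\|_{g_{cf}}$ are $O\bigl(1/\log(1/\epsilon)\bigr)$, while $\rho$ is bounded on $\mathrm{supp}(\nabla\chi_i)$ so the weight $e^{-\delta\rho}$ is harmless. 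Since $\|u_i+\lambda_i\phi\|_{C^{k,\alpha}_{cf}}$ is uniformly bounded on this set (using $|\lambda_i|\le 1$ and that $\phi,\nabla\phi$ are bounded on $\mathrm{supp}(\nabla\chi_i)$), the full cutoff term is $O(1/\log(1/\epsilon))$ in $C^{0,\alpha}_\delta(P)$ and therefore vanishes in the limit. Averaging by $A$ is bounded in $C^{0,\alpha}_\delta$, so the composite tends to zero, completing the proof.
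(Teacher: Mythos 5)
Your upper bound and Laplacian-commutes-with-averaging arguments match the paper, but the lower bound step has a genuine gap. The hypothesis of the theorem (as set up in Step 1) is $\|\Omega^{-2}\Delta^g(u_i+\lambda_i\phi)\|_{C^{k-2,\alpha}_\delta}\to 0$, not $\|\Omega^{-2}\Delta^g u_i\|\to 0$, and the two are genuinely different since $\phi$ is not harmonic. When $x_i$ wanders into the asymptotic region $\mathrm{supp}(\phi)$, the interior Schauder estimate you invoke on $B_1(x_i)$ produces an extra term of size $|\lambda_i|\cdot\|\Omega^{-2}\Delta^g\phi\|$ on the right-hand side, and without any control on $|\lambda_i|$ this term is not small. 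Consequently, your claim that the pointwise $C^{k,\alpha}_\delta$ bound at $x_i$ forces $\|u_i\|_{C^0_\delta(B_1(x_i))}$ to be uniformly bounded below does not follow: the pointwise $C^{k,\alpha}$ norm could be large because of derivative terms driven by $\lambda_i\phi$, while $|u_i|$ itself stays small near $x_i$. This is precisely the scenario the $|\lambda_i|$ term in the lemma's lower bound $c<\|\tilde u_i\|_{C^0_\delta(\{x_i\})}+|\lambda_i|$ is there to absorb, and you are attempting to prove a strictly stronger statement that is not available.

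The paper avoids this by arguing by contradiction: it assumes \emph{both} $\|\tilde u_i\|_{C^0_\delta(\{x_i\})}\to 0$ \emph{and} $|\lambda_i|\to 0$, which makes all the extra Schauder terms (including $|\lambda_i|\cdot\|\Omega^{-2}\Delta^g\phi\|$) vanish in the limit. The surviving term $\|u_i\chi_i-\tilde u_i\|_{C^0_\delta}$ is then forced to be bounded below, and the fiber-integral estimate (the same $O(\epsilon)$ oscillation bound you use, phrased via the vanishing of the non-$S^1$-invariant part somewhere on each fiber) shows it in fact tends to zero, yielding the contradiction. You should keep the contradiction structure and carry the $\lambda_i\phi$ terms through the Schauder estimate rather than attempting to bound $\|\tilde u_i\|$ alone.
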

\begin{proof}
	Using the same arguments given in Lemma \ref{lem:global-analysis:bounded-inverse-transformation-AH}, one can show
	$$
	\frac{c}{2} \le \|u_i \: \chi_i\|_{C^{k, \alpha}_\delta(P)} \le 1
	\text{ and }
	\|\Omega^{-2} \Delta^g (u_i \chi_i  + \lambda_i \phi)\|_{C^{k-2, \alpha}_\delta(P)} \to 0.
	$$
	To compare these results with those stated in the Lemma, recall that $g_{cf}$ is constructed from $S^1$-invariant metrics. Therefore, the operator that projects any function to its $S^1$-invariant component, is a bounded operator on $C^{0, \alpha}_\delta(P)$ and commutes with the Laplacian. This implies that $\|\Omega^{-2} \Delta^g (\tilde u_i + \lambda_i \phi)\|_{C^{0, \alpha}_\delta(P)}$ must converge to zero and that $\|\tilde u_i\|_{C^{0}_\delta(\{x_i\})} + |\lambda_i| \le 2$.
	
	For the last part, assume that both $\|\tilde u_i\|_{C^{0}_\delta(\{x_i\})}$ and $|\lambda_i|$ converge to zero. According to the local Schauder estimate given\footnote{Proposition 2.10 in \cite{Salm001} only proves this estimate for the asymptotic region of $P$. However, its proof is based on the fact that up to some local universal cover, the asymptotic region of $P$ has uniformly bounded geometry with respect to $g_{cf}$. In our case this is true everywhere away from the singularities.} by Proposition 2.10 in \cite{Salm001} and Proposition \ref{prop:appendix:elliptic-regularity}, there exist some constants $C,r>0$, independent of $x_i$ and $\epsilon$, such that
	\begin{align*}
		\frac{c}{2} \le \| u_i \chi_i\|_{C^{k,\alpha}_{\delta}(B_r(x_i))} \le& C \left[
		\| \Omega^{-2}\Delta^{GH} (u_i\chi_i + \lambda_i \phi) \|_{C^{k-2, \alpha}_{\delta}(B_{2r}(x_i))}
		\right. \\
		& + |\lambda_i|\cdot \| \Omega^{-2}\Delta^{GH} \phi \|_{C^{k-2, \alpha}_{\delta}(B_{2r}(x_i))}
		+ \| \tilde{u}_i\|_{C^0_{\delta}(B_{2r}(x_i))}  \\
		& 
		\left. 
		+ \| u_i \chi_i - \tilde{u}_i\|_{C^0_{\delta}(B_{2r}(x_i))} 
		\right].
	\end{align*}
	Except for $\| u_i \chi_i - \tilde{u}_i\|_{C^0_{\delta}(B_{2r}(x_i))}$, all terms on the right hand side converge to zero. So we conclude $\| u_i \chi_i - \tilde{u}_i\|_{C^0_{\delta}(B_{2r}(x_i))}$ is bounded below.
	
	We can estimate $\tilde{u}_i^f := u_i \chi_i - \tilde{u}_i$ explicitly. 
	Indeed, the function $\tilde{u}_i^f$ has no $S^1$-invariant part, and so for each $x \in B_{2r}(x_i)$ there exists a $t_x \in [0,2\pi]$ such that $\tilde{u}_i^f(e^{i t_x} \cdot x) = 0$. Using that $\tilde{u}_i$ is $S^1$-invariant, the fundamental theorem of calculus implies
	$$\tilde{u}_i^f(x) = -\int_{t=0}^{t= t_x} \frac{\del}{\del t} \tilde{u}_i^f (e^{i t}\cdot x) \d t = -\int_{t=0}^{t= t_x} \frac{\del}{\del t} u_i (e^{i t}\cdot x) \cdot \chi_i(x) \d t.$$
	With respect to $g_{cf}$, the fibre at $x_i$ has length of order $\epsilon$ and so
	$$
	|\tilde{u}_i^f(x_i)| \le \int_{t=0}^{t= 2 \pi} 
	\left|\d u_i (\del t) \right| \d t = \O(\epsilon) \cdot \|\d u_i \|.
	$$

	We conclude that $\| u_i \chi_i - \tilde{u}_i\|_{C^0_{\delta}(B_{2r}(x_i))}$ converges to zero and is bounded below at the same time. Therefore, the assumption that both $\|\tilde u_i\|_{C^{0}_\delta(P)}$ and $|\lambda_i|$ converge to zero is false.
\end{proof}

\textbf{Step 4.} Next we want to find a subsequence of $\tilde u_i + \lambda_i \phi$ which converges to some harmonic function on $B$. First we need to determine what the limiting metric will be. 
For this, notice that on the support of $\tilde u_i$ the metric $g_{cf}$ is an interpolation of metrics that can be decomposed into some uniform metric $\tilde g_B$ on the base space and a part that is of order $\epsilon$. For example, the metric
$$
g^{q_j}_{cf} = r_j^{-2} \d r_j + g_{S^2} + \frac{\epsilon^2}{r_j h_\epsilon^2} \eta_j^2
$$
can be written in the form $\tilde g_B + \O(\epsilon)$ and the limiting metric is $\tilde g_B =  r_j^{-2} \d r_j + g_{S^2}$. We conclude that in the limit $\epsilon \to 0$, the metric $g_{cf}$ degenerates to a metric on $B \setminus \cup\{p_i, q_j\}$.
Therefore, for any compact sets $K \subset K' \subset B \setminus \cup\{p_i, q_j\}$, we have the Schauder estimate
$$
\|\tilde{u}_i - \tilde{u}_j\|_{C^{k, \alpha}_{\tilde g_B}(K)} \le 
C \left[
\|\Omega^{-2} \Delta^g (\tilde{u}_i - \tilde{u}_j)\|_{C^{k-2, \alpha}_{\tilde g_B}(K')}
+ \|\tilde{u}_i - \tilde{u}_j\|_{C^{0}_{\tilde g_B}(K')}
\right].
$$
This estimate does not change significantly if we introduce $\lambda_i \phi$ on the right hand side.
Therefore, according to the Arzela-Ascoli theorem, there is a subsequence of $\tilde u_i$ which converges in $C^{k, \alpha}_{\tilde g_B}(K)$. By exhausting the punctured base space by compact sets, applying Arzela-Ascoli on each of them, and taking the diagonal sequence, we conclude:
\begin{lemma}
	\label{lem:global-analysis:bounded-inverse-harmonic-basespace}
	There exists a twice differentiable function $\tilde u$ on $B \setminus \cup\{p_i, q_j\}$ and a $\lambda \in [-1,1]$, such that for any compact set $K \subset B \setminus \cup\{p_i,q_j\}$, 
	\begin{align*}
		\tilde u_i \to& \tilde u \in C^{2, \alpha}_{\tilde g_B}(K) &
		\lambda_i \to& \lambda
	\end{align*}
	and
	$$
	\Delta^{B} (\tilde u + \lambda \phi) = \Delta^g (\tilde u + \lambda \phi) = 0.
	$$
\end{lemma}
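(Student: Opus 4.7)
The plan is to extract a limit of the sequence $\tilde u_i + \lambda_i \phi$ by a diagonal Arzela--Ascoli argument on an exhaustion of $B \setminus \cup\{p_i,q_j\}$ by compact sets, and then to show that the limit satisfies the flat Laplace equation by passing to the limit in the defining PDE.

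First, I would exhaust the punctured base space by a nested sequence of compact sets $K_1 \subset K_2 \subset \cdots \subset B \setminus \cup\{p_i,q_j\}$ with $K_\ell \subset \operatorname{int}(K_{\ell+1})$ and $\cup_\ell K_\ell = B \setminus \cup\{p_i,q_j\}$. Since each $x_i$ is bounded away from the singularities, we may assume that $\chi_i \equiv 1$ on $K_\ell$ for $i$ sufficiently large, so that $\tilde u_i$ is simply the $S^1$-average of $u_i$ on $\pi^{-1}(K_\ell)$. On such a compact set the metric $g_{cf}$ is an $O(\epsilon)$-perturbation of the lifted flat metric $\tilde g_B$ on the base (this is exactly the observation preceding the lemma), so the operator $\Omega^{-2}\Delta^g$ restricted to $S^1$-invariant functions converges in coefficients to $\Delta^{\tilde g_B}$ on $K_{\ell+1}$. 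Moreover, from $\|u_i\|_{C^{k,\alpha}_\delta(M_{B,n})} \le 1$ we have a uniform $C^{k,\alpha}_{\tilde g_B}(K_{\ell+1})$ bound on $\tilde u_i$, since averaging over the $S^1$-fibre is a bounded operation and $K_{\ell+1}$ is compact and bounded away from the singularities, on which the weight $e^{-\delta \rho}$ and the conformal factor $\Omega$ are bounded above and below.

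Then I would apply the interior Schauder estimate displayed just above the lemma to the differences $\tilde u_i - \tilde u_j$ on the pair $K_\ell \subset K_{\ell+1}$. Combined with Arzela--Ascoli and the $C^{k,\alpha}$ bounds from the previous paragraph, this produces, for each $\ell$, a subsequence converging in $C^{k,\alpha}_{\tilde g_B}(K_\ell)$; a standard diagonal extraction yields a single subsequence converging on every $K_\ell$ to some function $\tilde u \in C^{k,\alpha}_{\tilde g_B,\,\mathrm{loc}}(B \setminus \cup\{p_i,q_j\})$. At the same time, since $|\lambda_i| \le 1$, the Bolzano--Weierstrass theorem lets us refine the subsequence so that $\lambda_i \to \lambda$ for some $\lambda \in [-1,1]$.

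Finally, I would pass to the limit in the PDE. Lemma \ref{lem:global-analysis:bounded-inverse-transformation-GH} gives $\|\Omega^{-2}\Delta^g(\tilde u_i + \lambda_i \phi)\|_{C^{0,\alpha}_\delta(P)} \to 0$, so on any fixed compact $K_\ell$ the left-hand side tends to zero uniformly. The coefficients of $\Omega^{-2}\Delta^g$ acting on $S^1$-invariant functions converge uniformly on $K_\ell$ to those of $\Delta^{\tilde g_B}$, and $\phi$ is smooth and independent of $\epsilon$ on this region. Combined with $C^2$-convergence of $\tilde u_i$ on $K_\ell$, this yields $\Delta^{\tilde g_B}(\tilde u + \lambda \phi) = 0$ on $K_\ell$; since $\ell$ is arbitrary, the identity holds on all of $B \setminus \cup\{p_i,q_j\}$, giving the conclusion.

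The main technical point, and the step that needs the most care, is checking that the projection onto $S^1$-invariants is compatible with the weighted H\"older norms used on $P$ and the corresponding norms on $B'$, and that the coefficients of $\Omega^{-2}\Delta^g$ restricted to $S^1$-invariant functions really do converge in $C^{k-2,\alpha}$ on compact subsets of the punctured base; this uses the precise description of $g_{cf}$ from Definition \ref{def:global-analysis:global-metric-and-functions} together with Remark \ref{rem:higher-derivatives}, and is where the uniformity in $\epsilon$ is essential.
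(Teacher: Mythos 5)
Your proposal follows the same overall strategy as the paper's argument: identify the limiting metric $\tilde g_B$ on compact subsets of the punctured base, use the interior Schauder estimate together with Arzela--Ascoli and a diagonal extraction to produce a $C^{k,\alpha}$-convergent subsequence, extract a convergent subsequence of $\lambda_i$ by Bolzano--Weierstrass, and pass to the limit in the PDE. In this respect your plan matches the paper's.

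There is however one genuine imprecision in the final step that would, as written, yield the wrong limiting equation. You assert that the coefficients of $\Omega^{-2}\Delta^g$ acting on $S^1$-invariant functions converge to those of $\Delta^{\tilde g_B}$ and then conclude $\Delta^{\tilde g_B}(\tilde u + \lambda\phi) = 0$. The lemma asserts $\Delta^{B}(\tilde u + \lambda\phi) = 0$ with $\Delta^B$ the flat Laplacian on the base, and these two statements are not equivalent: on a three-manifold a conformal change $\tilde g_B = \Omega_0^2 g_B$ produces $\Delta^{\tilde g_B} = \Omega_0^{-2}\Delta^B + (\text{first-order term})$, and the extra first-order term does not vanish unless $\Omega_0$ is constant (which it is not, since $\Omega_0 \sim r^{-1}$ in the asymptotic region). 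The paper sidesteps this by invoking the \emph{exact} Gibbons--Hawking reduction on $S^1$-invariant functions, $\Delta^{GH} = h_\epsilon^{-1}\Delta^B$, so that $\Omega^{-2}\Delta^g$ restricted to $S^1$-invariants is precisely the positive multiple $\Omega^{-2}h_\epsilon^{-1}\Delta^B$ of the flat Laplacian. As $\epsilon\to 0$ this converges to $\Omega_0^{-2}\Delta^B$ — still a positive multiple of $\Delta^B$, and in particular with the same kernel — which gives the correct conclusion. You should replace the appeal to ``coefficient convergence to $\Delta^{\tilde g_B}$'' with this multiplicative identity. Relatedly, $\tilde g_B$ is not ``the lifted flat metric'': it equals $g_B$ only on the middle region where $\Omega \equiv 1$, and is the cylindrical conformal rescaling of $g_B$ near the singularities and at infinity. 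The role of $\tilde g_B$ in the argument is to define the H\"older \emph{norm} used in the Schauder estimate, not to furnish the limiting operator.
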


In the last part of the lemma, one has to notice that on the support of $u$ the metric $g$ is $g^{GH}$ and $\Delta^{GH} = \frac{1}{h_\epsilon} \Delta^B$ for $S^1$-invariant functions.

Before we can make any qualitative statement about $\tilde u + \lambda \phi$, we need to consider its behaviour near the boundary of $B \setminus \cup\{p_i,q_j\}$. 
According to Theorem 1.5 in \cite{Salm001} and Proposition \ref{prop:appendix:Fredholm-estimate}, there is a uniform constant $C> 0$ and there are large compact sets $K \subset K' \subset P$ covering the singularities such that
\begin{align*}
	\|\tilde{u}_i - \tilde{u}_j\|_{C^{2, \alpha}_{\delta}(P\setminus K')} 
	\le& 
	C \left[
	\|\Omega^{-2} \Delta^g (\tilde{u}_i + \lambda_i \phi)\|_{C^{0, \alpha}_{\delta}(P\setminus K)}
	\right. \\
	& \qquad+ \|\Omega^{-2} \Delta^g (\tilde{u}_j + \lambda_j \phi)\|_{C^{0, \alpha}_{\delta}(P\setminus K)}	\\
	& 
	\qquad + |\lambda_i - \lambda_j| \cdot \|\Omega^{-2} \Delta^g \phi \|_{C^{0, \alpha}_{\delta}(P\setminus K)} \\
	& \qquad+ \left.\|\tilde{u}_i - \tilde{u}_j\|_{C^{0}_{\delta}(K' \setminus K)}
	\right].
\end{align*}
Because the right hand side converges to zero, $\tilde{u}_i$ is a Cauchy sequence in $C^{2, \alpha}_{\delta}(P\setminus K')$, which implies its limits decays with order $e^{\delta \rho}$ near infinity.

Next we study the behaviour near the punctures, where $g_{cf}$ is just $g^{p_i}_{cf}$ or $g^{q_j}_{cf}$. We show that $\tilde{u}_b$ can be smoothly extended over the singularities, by applying the removable singularity theorem. For this we need to show that $\tilde u$ has some slow polynomial divergence near each singularity. 
\begin{lemma}
	% \label{lem:setup:geen-inspiratie}
	On any compact neighbourhood $K$ of $p_i$ or $q_j$ inside $B$ and any $\tilde \delta < \delta$, $r_j^{- 2\tilde \delta} \tilde{u} \in C^0(K)$.
\end{lemma}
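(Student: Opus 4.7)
The plan is to extract a pointwise decay estimate for $\tilde u$ near the singularity directly from the uniform weighted $C^{k,\alpha}_\delta$-bound on $\tilde u_i$, and then observe that multiplying by $r_j^{-2\tilde\delta}$ still yields something vanishing at $q_j$ when $\tilde\delta < \delta < 0$. No serious obstacle arises; the proof is essentially bookkeeping, and the factor $2$ in the exponent is a comfortable margin rather than a sharp requirement.

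First I would unpack the weighted norm. The bound $\|u_i\|_{C^{k,\alpha}_\delta(M_{B,n})} \le 1$ implies the pointwise estimate $|u_i(x)| \le e^{\delta \rho(x)}$. In the model region near $q_j$, Definition \ref{def:global-analysis:global-metric-and-functions} tells us $\rho = \log r_j$, so $|u_i(x)| \le r_j(x)^\delta$ there. Since $r_j$ is constant along each $S^1$-fibre of $\pi \colon P \to B'$ and $|\chi_i| \le 1$, the fibre-averaged projection satisfies
\[
|\tilde u_i(x)| \le \frac{1}{2\pi} \int_{\pi^{-1}(x)} |u_i| \, |\chi_i| \, \eta \le r_j(x)^\delta
\]
on the part of $B'$ where the model region covers the fibre. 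For any fixed $r_j > 0$ this holds once $\epsilon_i$ is small enough that $5\epsilon_i^{2/5} < r_j$, so the estimate is eventually valid throughout any fixed punctured neighbourhood of $q_j$ in $B$.

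Next I would pass to the limit. By Lemma \ref{lem:global-analysis:bounded-inverse-harmonic-basespace}, $\tilde u_i \to \tilde u$ in $C^{2,\alpha}_{\tilde g_B}(K')$ on every compact $K' \subset B \setminus \{p_i,q_j\}$; in particular convergence is pointwise. Taking the pointwise limit of the bound above gives $|\tilde u(x)| \le C \, r_j(x)^\delta$ on a punctured neighbourhood of $q_j$ in $B$.

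Finally, for any $\tilde\delta < \delta$ we use $\delta < 0$ to get
\[
\delta - 2\tilde\delta > \delta - 2\delta = -\delta > 0,
\]
so $|r_j^{-2\tilde\delta} \tilde u(x)| \le C \, r_j(x)^{\delta - 2\tilde\delta} \to 0$ as $r_j \to 0$. Hence $r_j^{-2\tilde\delta} \tilde u$ extends continuously by $0$ at $q_j$ and lies in $C^0(K)$. The only thing requiring mild care is that both the cutoff $\chi_i$ and the $S^1$-averaging are contractions on the pointwise norm, so the bound on $u_i$ indeed passes to $\tilde u_i$; the argument at a non-fixed singularity $p_i$ is identical after replacing $r_j$ by $r_i$.
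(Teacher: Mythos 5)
Your proof is correct, and it takes a genuinely different (more direct) route than the paper's. The paper proves the lemma by showing that $r_j^{-\tilde\delta}\tilde u_k$ is a Cauchy sequence in $C^0(K)$: it fixes a small ball $B(q_j)$ on which $r_j^{\delta-\tilde\delta}<\tilde\epsilon$, bounds the difference $r_j^{-\tilde\delta}(\tilde u_k-\tilde u_l)$ on $B(q_j)$ by $2\tilde\epsilon$ using the uniform bound on $r_j^{-\delta}\tilde u_k$ from Lemma~\ref{lem:global-analysis:bounded-inverse-transformation-GH}, and bounds it on $K\setminus B(q_j)$ using the local uniform convergence from Lemma~\ref{lem:global-analysis:bounded-inverse-harmonic-basespace}; completeness of $C^0(K)$ then yields $r_j^{-\tilde\delta}\tilde u\in C^0(K)$. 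You instead extract the pointwise decay $|\tilde u(x)|\le C\,r_j(x)^\delta$ directly, by unpacking the weighted norm (so that $|u_i|\le e^{\delta\rho}$ with $\rho=\log r_j$ in the model region near $q_j$), noting that the cutoff and the fibre-average are $\ell^\infty$-contractions so the bound descends to $\tilde u_i$, and then passing to the pointwise limit via Lemma~\ref{lem:global-analysis:bounded-inverse-harmonic-basespace}. Multiplying by $r_j^{-2\tilde\delta}$ leaves a positive power $\delta-2\tilde\delta>0$ (using $\delta<0$), so the product extends continuously by zero at $q_j$. Your version is arguably cleaner since it isolates the pointwise decay as the real content; the paper's Cauchy argument yields a marginally stronger conclusion (continuity of $r_j^{-\tilde\delta}\tilde u$, which implies the stated one with weight $r_j^{-2\tilde\delta}$ after multiplying by the continuous function $r_j^{-\tilde\delta}$), but for the statement as written both are adequate. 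One small point you gloss over: the radius below which $\rho$ agrees exactly with $\log r_j$ depends on $\epsilon_i$ (through $r_{AH}>R_3$) and not only on the cutoff threshold $5\epsilon_i^{2/5}$; but since both thresholds shrink to zero as $\epsilon_i\to 0$, the conclusion that the bound holds eventually on any fixed punctured ball is still correct.
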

\begin{proof}
	Assume without loss of generality that $K$ is a compact neighbourhood of $q_j.$ Let $\tilde \epsilon > 0$ be arbitrary. There is some small open ball $B(q_j)$ such that on this ball $|r_j^{\delta - \tilde \delta}| < \tilde{\epsilon}$. Hence, for any $k,l \in \N$,
	\begin{align*}
		\|r_j^{- \tilde \delta} (\tilde{u}_k - \tilde{u}_l) \|_{C^0(K)} 
		\le&
		\|r_j^{\delta - \tilde \delta}\|_{C^0(B(q_j))} \cdot 
		\left(
		\|r_j^{- \delta} \tilde{u}_k  \|_{C^0(B(q_j))}
		+ \|r_j^{- \delta} \tilde{u}_l \|_{C^0(B(q_j))}
		\right)  \\
		&+ \|r_j^{\delta - \tilde \delta}\|_{C^0(K \setminus B(q_j))} \cdot \|r_j^{- \delta} (\tilde{u}_k - \tilde{u}_l )\|_{C^0(K \setminus B(q_j))} \\
		\le&
		2 \tilde \epsilon
		+ \|r_j^{\delta - \tilde \delta}\|_{C^0(K \setminus B(q_j))} \cdot \|r_j^{- \delta} (\tilde{u}_k - \tilde{u}_l )\|_{C^0(K \setminus B(q_j))},
	\end{align*}
	where in the last step we used the upper bound of $r_j^{- \delta} \tilde{u}_k$ from Lemma \ref{lem:global-analysis:bounded-inverse-transformation-GH}.

	Next we apply Lemma \ref{lem:global-analysis:bounded-inverse-harmonic-basespace} on the compact set $K \setminus B(q_j)$ and this implies there exists an $N \in \N$ such that all $k,l > N$, 
	$$
	\|r_j^{- \delta} (\tilde{u}_k - \tilde{u}_l )\|_{C^0(K \setminus B(q_j))} < \frac{\tilde \epsilon}{\|r_j^{\delta - \tilde \delta}\|_{C^0(K \setminus B(q_j))}}.
	$$ 
	We conclude $\|r_j^{- \tilde \delta} (\tilde{u}_k - \tilde{u}_l) \|_{C^0(K)}  \le 3 \tilde 
	\epsilon$ and so $r^{- \tilde \delta} \tilde{u}_k$ is a Cauchy sequence in $C^0(K)$ with limit $r_j^{- \tilde \delta} \tilde{u}$.
\end{proof}

We conclude that $\tilde{u} + \lambda \phi$ is a harmonic function on $B$ and $\tilde u$ decays with order $e^{\delta \rho}$ near infinity.

\textbf{Step 5.} With the asymptotic behaviour of $\tilde u$ understood, we can show $\tilde u = \lambda \phi = 0$. The only harmonic functions on $B$ that are of order $\O(\rho)$ are the harmonic polynomials of degree 1. Using that $\tilde{u} + \lambda \phi$ is $\Z_2$ invariant, we conclude $\tilde{u} + \lambda \phi$ is constant. Because the map $\phi$ is unbounded, $\lambda$ must be equal to zero. Finally, the function $\tilde u$ is decaying, and the only constant that is decaying is the constant zero function. Therefore, $\tilde u = \lambda \phi = 0$.

Finally, we prove Theorem \ref{thm:global-analysis:bounded-inverse}. If this theorem is false, then the sequence $x_i$ cannot have a converging subsequence in $P$, due to the lower bound in Lemma \ref{lem:global-analysis:bounded-inverse-transformation-GH}. When $x_i$ diverges, Theorem 1.5 in \cite{Salm001}, Proposition \ref{prop:appendix:Fredholm-estimate} and Lemma \ref{lem:global-analysis:bounded-inverse-transformation-GH} imply
\begin{align*}
	c < \| \tilde u_i\|_{C^{2,\alpha}_{\delta}(\{x_i\})} \le& C \left[
	\| \Omega^{-2}\Delta^{g}  (\tilde u_i + \lambda_i \phi) \|_{C^{0, \alpha}_{\delta}(P)}
	\right.\\
	&+|\lambda_i|\cdot \| \Omega^{-2}\Delta^{g} \phi \|_{C^{0, \alpha}_{\delta}(P)} \left. 
	+ \| \tilde u_i\|_{C^0_{\delta}(K)} 
	\right],
\end{align*}
for some compact set $K$ and constants $c, C > 0$. At the same time, the right hand side will converge to zero as $\|\tilde{u}_i\|_{C^0_\delta(K)}$ converges to $\|\tilde{u}\|_{C^0_\delta(K)} = 0$.

\subsection{The non-linear part and existence}
Finally, we study the non-linear part $N(\zeta) = 2 \Omega^{-2} \Tf ( (\d \d^* \zeta)^2)$ and prove the existence of the \hk triple. We do this in multiple steps: First, we estimate $\d \d^* \zeta$ in terms of $C^{k, \alpha}_{cf}(\Omega^2(M_{B,n}))$. Secondly, we work out $N(\zeta) - N(\xi)$ using the product rule for H\"older norms, which yields an explicit error. Finally, we calculate this error on each region separately.
\begin{lemma}
	\label{lem:inverse-function-theorem:derivative-non-linear-part}
	Let $\zeta \in (C^{k+2, \alpha}_{\delta}(M_{B,n}) \cdot \omega) \otimes \R^3$.
	There exists a constant $C > 0$, independent of $\zeta$ and $\epsilon$, such that
	$$
	\|\d \d^* \zeta\|_{C^{k,\alpha}_{\delta}(\Omega^2(M_{B,n}))} \le C \cdot \| \zeta\|_{(C^{k+2, \alpha}_{\delta}(M_{B,n}) \cdot \omega) \otimes \R^3}.
	$$
\end{lemma}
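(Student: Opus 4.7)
The strategy is to exploit the explicit decomposition $\zeta_i = \sum_j u_{ij}\omega_j$ (with $u_{ij}\in C^{k+2,\alpha}_\delta(M_{B,n})$, or $u_{ij}=v_{ij}+\lambda_{ij}\phi$ when $B\ne\R^3$) built into the definition of the domain of $F$. Because $\zeta_i$ lies in $\Omega^+(M_{B,n})\otimes\R^3$ and the $\omega_j$ are closed, we have
\begin{equation*}
\d\zeta_i = \d u_{ij}\wedge\omega_j,\qquad \d^*\zeta_i = -{*}\,\d\zeta_i = -{*}(\d u_{ij}\wedge\omega_j),
\end{equation*}
so that $\d\d^*\zeta_i = -\d\bigl({*}(\d u_{ij}\wedge\omega_j)\bigr)$ is a second-order linear operator on the scalar coefficients $u_{ij}$ whose coefficients are built from the metric $g$ and the triple $\omega$.

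For the genuinely weighted piece $v_{ij}\in C^{k+2,\alpha}_\delta$, the estimate follows from the Leibniz rule applied to the expression above together with the fact that $\omega_j$ and all its covariant derivatives are bounded in $C^{k+1,\alpha}_{cf}$ uniformly in $\epsilon$ with respect to the reference metric $g_{cf}$ from Definition \ref{def:global-analysis:global-metric-and-functions}; these uniform bounds follow from Theorem \ref{thm:almost-hk:global-symplectic-triple}, Lemmas \ref{lem:almost-hk:diff-kahler-GH-M}--\ref{lem:almost-hk:diff-kahler-TN-M}, and Remark \ref{rem:higher-derivatives}. This yields
\begin{equation*}
\|\d\d^*(v_{ij}\omega_j)\|_{C^{k,\alpha}_\delta(M_{B,n})} \le C\,\|v_{ij}\|_{C^{k+2,\alpha}_\delta(M_{B,n})}
\end{equation*}
with $C$ independent of $\epsilon$.

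For the $\lambda_{ij}\phi$ piece (present only when $B\ne\R^3$), the difficulty is that $\phi$ itself is not in any weighted space of negative weight, so the bound must arise from cancellations. Since $\phi$ vanishes on the compact interior and equals $\rho$ on the asymptotic region of $P$, and since $\omega=\omega^{GH}$ there up to the small errors quantified in Theorem \ref{thm:almost-hk:global-symplectic-triple}, one reduces to analysing $\d\d^*(\phi\omega_j^{GH})$. Using the pointwise K\"ahler identity ${*}(\alpha\wedge\omega_j^{GH})=I_j^{GH}\alpha$ (valid because $\omega^{GH}$ is parallel for $g^{GH}$), this becomes $\d\d^*(\phi\omega_j^{GH})=-\d(I_j^{GH}\d\phi)$. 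A direct computation in Gibbons-Hawking coordinates shows that $\d(I_j^{GH}\d\phi)$ decays with the right rate to lie in $C^{k,\alpha}_\delta$ uniformly in $\epsilon$: exponentially in $r$ for $B=\R\times T^2$ (where $\phi=r$ is asymptotically $g^{GH}$-parallel), and of order $r^{-1}$ with respect to $g^{GH}_{cf}$ for $B=\R^2\times S^1$ (where $\phi=\log r$), which is absorbed by the weight $e^{-\delta\rho}=r^{-\delta}$ since $\delta\in(-1,0)$. The cutoff region where $\phi$ transitions from $0$ to $\rho$ sits on a fixed compact set and contributes boundedly.

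The main obstacle is the careful tracking of constants in the last step to guarantee uniformity in $\epsilon$: the identity $\d\d^*(\phi\omega_j) = -\d(I_j^{GH}\d\phi)$ holds only up to the $O(\epsilon^{7/5})$ error coming from $\omega-\omega^{GH}$, and one must check that neither this error nor the $\epsilon$-dependent behaviour of $h_\epsilon$ entering the Gibbons-Hawking complex structure causes the constant $C$ to blow up as $\epsilon\to 0$. Once both ingredients are in place, summing the two estimates gives $\|\d\d^*\zeta\|_{C^{k,\alpha}_\delta}\le C\bigl(\|v\|_{C^{k+2,\alpha}_\delta}+|\lambda|\bigr) = C\|\zeta\|$, which is the required bound.
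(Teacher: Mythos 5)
Your decomposition $\zeta_i = u_{ij}\omega_j$ and the reduction of $\d\d^*\zeta$ to a linear second-order operator in the scalar coefficients is exactly the paper's strategy. The paper's entire proof is essentially your first paragraph: write $\d\d^*\zeta_i = -\sum_{jkl}\d\bigl[g^{-1}_{kl}\,\partial_l u_{ij}\cdot\omega_j(\partial_k,\cdot)\bigr]$ in local coordinates, observe that $g^{-1}$ is $\O(\Omega^2)$ and $\omega_j$ is $\O(\Omega^{-2})$ in $g_{cf}$-norm (Theorem \ref{thm:almost-hk:global-symplectic-triple}), so the operator coefficients are $\O(1)$ uniformly in $\epsilon$, and stop there.

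You go further and isolate the $\lambda_{ij}\phi$ summand (present when $B\ne\R^3$) as needing a separate argument. That instinct is reasonable — the operator-norm bound gives $\|\d\d^*(u\omega_j)\|_{C^{k,\alpha}_\delta}\lesssim\|u\|_{C^{k+2,\alpha}_\delta}$, which does not directly cover the nondecaying function $\phi$, and the paper's three-line proof does not explicitly address it. However, the resolution you sketch for $B=\R\times T^2$ appears to overclaim. You assert $\d(I_j^{GH}\d\phi)$ decays exponentially in $r$ because $\phi=r$ is ``asymptotically $g^{GH}$-parallel'', but with $I_1^{GH}\d r = \frac{\epsilon}{h_\epsilon}\eta$ one computes $\d(I_1^{GH}\d r) = -\frac{\epsilon^2}{h_\epsilon^2}\d h\wedge\eta + \frac{\epsilon}{h_\epsilon}\,{*}^{B}\d h$, and by Lemma \ref{lem:setup:harmonic-function}(a), $\d h \sim \beta\,(16-2\#\{p_i\})\,\d r$ is a nonzero constant when $\#\{p_i\}<8$ (the ALH* subcase). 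The leading term is then of order $\frac{\epsilon}{h_\epsilon}\sim r^{-1}$ in $g_{cf}$-norm, not exponentially small; exponential decay only happens in the special ALH case $\#\{p_i\}=8$, where the linear part of $h$ vanishes. Since $\rho=r$ and $\delta<0$, the product $e^{-\delta\rho}\cdot\O(r^{-1})$ is unbounded, so this step does not close as written. You would need either a genuinely different mechanism for the $\phi$-summand in ALH*, or to re-examine whether the lemma is needed (or stated) in quite this form for that case — neither of which the paper's own proof resolves.
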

\begin{proof}
	Expand $\zeta$ into $\zeta_i = \sum_j u_{ij} \omega_j$.
	In any local coordinates $\{x_i\}$, $\d \d^* \zeta_i$ can be written as
	$$
	\d \d^* \zeta_i
		= - \sum_j \d \iota_{\nabla^g u_{ij}} \omega_j
		% = - \sum_{jk} \d x_k(\nabla^g u_{ij}) \omega_j(\del_k, \ldots)
		% = - \sum_{jkl} g^{-1}_{kl} g(\nabla^g u_{ij}, \del_l) \omega_j(\del_k, \ldots)
		% = - \sum_{jkl} g^{-1}_{kl} \frac{\del u_{ij}}{ \del x_l} \omega_j(\del_k, \ldots)
		= - \sum_{jkl} \d \left[g^{-1}_{kl} \frac{\del u_{ij}}{ \del x_l} \cdot \omega_j(\del_k, \ldots) \right]
	$$
	According to Theorem \ref{thm:almost-hk:global-symplectic-triple}, $\omega$ is of order $\O(\Omega^{-2})$, while $g^{-1}$ is of order $\O(\Omega^{2})$, making $\d \d^* \zeta_i = \O(1)$.
\end{proof}

We also calculate $\d \d^* (\phi \omega)$. Sadly, this term does not lie in $C^{k,\alpha}_{\delta}(\Omega^2(M_{B,n}))$. To measure its norm we need to separate the ALH* case from the ALG/ALG*/ALH cases.

\begin{lemma}
	\label{lem:inverse-function-theorem:additional-estimates-non-linear-part}
	When $B = \R^2 \times S^1$, there exists a constant $C > 0$, independent of $\epsilon$, such that 
	$$
	\|\d \d^* (\phi \: \omega)\|_{C^{k,\alpha}_{cf}(\Omega^2(M_{B,n}))} \le C.
	$$
	The same estimate is true when $B = \R \times T^2$ and $P$ is the trivial bundle.

	When $B = \R\times T^2$ and $P$ is a non-trivial circle bundle, there exists a constant $C > 0$, independent of $\epsilon$, such that 
	$$
	\|e^{- \rho } \d \d^* (\phi \: \omega)\|_{C^{k,\alpha}_{cf}(\Omega^2(M_{B,n}))} \le C.
	$$
\end{lemma}
\begin{proof}
	When $\phi$ is non-zero, $\omega$ is self-dual with respect to $*^{GH}$. Therefore, one can show that 
	$$
	\d \d^* (\phi \: \omega) = - \d *^{GH} ( \d \phi \wedge \omega ) = - \d *^{cf} ( \d \phi \wedge (\Omega^{2} \omega) ).
	$$

	When $B = \R^2 \times S^1$ the norm of $\d \phi$ is uniformly bounded with respect to $g_{cf}$, and so $\|\d \d^* (\phi \omega)\|_{C^{k,\alpha}_{cf}(\Omega^2(M_{B,n}))} = \O(\Omega^2 \omega) = \O(1)$.
	When $B = \R \times T^2$, $\d \phi = \O(e^\rho)$, which explains the other estimate.

	The only case we need to study is when $B = \R \times T^2$ and $P$ is the trivial vector bundle. According to Lemma \ref{lem:setup:harmonic-function} and Definition \ref{def:setup:principal-bundle}
	$$
	h_\epsilon = 1 + \epsilon(A + \O(e^{-r})),
	$$
	where $A \in \R$. An explicit calculation shows that
	$$
	\d\d^* (\phi \: \omega_i) = \d *^{GH} \d \phi \wedge \omega_i = \d \left(- \frac{\del \phi}{\del x_i} \: \epsilon h^{-1}_\epsilon \eta + *^B (\d \phi \wedge \d x_i)\right).
	$$
	On the region $\phi = r = x_1$, this simplifies to
	$$
	\d^* (\phi \: \omega_i) = \d\begin{cases}
		- \epsilon  h^{-1}_\epsilon \eta & i = 1 \\
		\d x_3 & i = 2 \\
		-\d x_2 & i = 3,
	\end{cases}
	$$
	and so we only need to estimate $|\epsilon \d (h_\epsilon^{-1} \eta)|_{cf}$, which is
	\begin{align*}
		|\epsilon \d ( h_\epsilon^{-1} \eta)|_{cf}
		% =& |\epsilon h_\epsilon^{-1} \d \eta - \epsilon h_\epsilon^{-2}\d h_\epsilon \wedge \eta|_{cf} \\
		=& |h_\epsilon^{-1} *^{B }\d h_\epsilon - \epsilon h_\epsilon^{-2}\d h_\epsilon \wedge \eta|_{cf} \\
		=& \sqrt{2} \left| \frac{r}{h_\epsilon} \d h_\epsilon\right|_{cf}
		= \frac{\epsilon r}{h_\epsilon} \O(e^{-r}) = \O(1).
	\end{align*}
\end{proof}
For any $f, g \in C^{k, \alpha}_{cf} (M_{B,n})$, the product rule of H\"older norms implies
$f\cdot g \in C^{k, \alpha}_{cf} (M_{B,n})$ and $\|f \cdot g \|_{C^{k, \alpha}_{cf}(M_{B,n})} \le C \|f \|_{C^{k, \alpha}_{cf}(M_{B,n})} \cdot \| g \|_{C^{k, \alpha}_{cf}(M_{B,n})}$ for some uniform constant $C$. This implies that the wedge product  can be viewed as the bounded linear map
$$
\wedge \colon C^{k, \alpha}_{cf}(\Omega^{2}(M_{B,n})) \times C^{k, \alpha}_{cf}(\Omega^{2}(M_{B,n})) \to C^{k, \alpha}_{cf}(\Omega^{4}(M_{B,n})).
$$
With this version of the H\"older product rule, we can prove the non-linear condition for the inverse function theorem.

\begin{proposition}
	\label{prop:inverse-function-theory:non-linear-part}
	Let $N(\zeta) = 2 \Omega^{-2} \Lambda^{-1} \Tf ( \d\d^* \zeta \wedge \d \d^* \zeta)$.
	There exists a $q > 0$ of order $\O(\epsilon^{\delta-2})$ such that for any $\zeta, \xi \in (C^{k+2, \alpha}_{\delta}(M_{B,n}) \cdot \omega) \otimes \R^3$  (or $((C^{k+2, \alpha}_{\delta}(M_{B,n}) \oplus \R \phi)\cdot \omega ) \otimes \R^3$ when $B \not = \R^3$),
	$$
	\|N(\zeta) - N(\xi) \|_{(C^{k, \alpha}_{\delta}(M_{B,n}) \cdot \omega) \otimes \R^3} \le q \cdot \|\zeta + \xi\| \cdot \|\zeta - \xi\|,
	$$
	where $\| \zeta \pm \xi\|$ is measured with the $(C^{k+2, \alpha}_{\delta}(M_{B,n}) \cdot \omega) \otimes \R^3$ or $((C^{k+2, \alpha}_{\delta}(M_{B,n}) \oplus \R \phi)\cdot \omega) \otimes \R^3$ norm respectively.
\end{proposition}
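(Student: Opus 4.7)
My plan hinges on an algebraic observation: since the wedge product is symmetric on $2$-forms, the map $A \mapsto A \wedge A$ is a symmetric quadratic form, so that $A \wedge A - B \wedge B = (A+B) \wedge (A-B)$. Applied to $A = \d\d^* \zeta$ and $B = \d\d^*\xi$, together with the linearity of $\Tf$, $\Lambda^{-1}$, and multiplication by $\Omega^{-2}$, this polarisation identity yields
$$
N(\zeta) - N(\xi) = 2\Omega^{-2}\Lambda^{-1}\Tf\bigl(\d\d^*(\zeta+\xi) \wedge \d\d^*(\zeta-\xi)\bigr),
$$
which is exactly the bilinear factorisation required by the inverse function theorem.

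Next, I would estimate the two factors separately. By Lemma \ref{lem:inverse-function-theorem:derivative-non-linear-part} we already have $\|\d\d^*\chi\|_{C^{k,\alpha}_\delta(\Omega^2)} \le C\|\chi\|$ for any $\chi$ in the domain of $F$. The H\"older product rule turns the wedge map $\wedge \colon C^{k,\alpha}_{cf}(\Omega^2) \times C^{k,\alpha}_{cf}(\Omega^2) \to C^{k,\alpha}_{cf}(\Omega^4)$ into a bounded bilinear operator, and inserting the weight $e^{-\delta\rho}$ on each of the two $2$-form factors gives
$$
\|\d\d^*(\zeta+\xi) \wedge \d\d^*(\zeta-\xi)\|_{C^{k,\alpha}_{2\delta}(\Omega^4)} \le C \|\zeta+\xi\| \|\zeta-\xi\|.
$$

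It then remains to bound $\Omega^{-2}\Lambda^{-1}\Tf$ and to convert the weight from $2\delta$ back to $\delta$. The projection $\Tf$ is uniformly bounded pointwise, and $\Lambda$ is pointwise multiplication by the approximately hyperk\"ahler triple $\omega$, so $\Lambda^{-1}$ has a pointwise $g_{cf}$-operator norm of order $\Omega^{2}$, which cancels the $\Omega^{-2}$ in front. Going through the regions of Definition \ref{def:global-analysis:global-metric-and-functions}, I would verify that $\Omega^{-2}\Lambda^{-1}\Tf$ is indeed uniformly bounded in $\epsilon$ as a pointwise map $\Omega^4 \to \Omega^2$. The conversion $C^{k,\alpha}_{2\delta} \to C^{k,\alpha}_\delta$ then costs a multiplicative factor of $\sup_{M_{B,n}} e^{\delta\rho}$, and since $\delta < 0$ this supremum is attained on the Taub-NUT and Atiyah-Hitchin cores where $\rho = \log\bigl(\epsilon R_3/(1+\epsilon\alpha)\bigr) \sim \log\epsilon$; combining this contribution with the additional factor $\epsilon^{-2}$ picked up by $\Lambda^{-1}$ in the core (where $|\omega|_{g_{cf}} \sim \epsilon^2$ because $\omega = \tfrac{\epsilon^2}{1+\epsilon\alpha}\omega^{TN}$ and $g_{cf}=g^{TN}$) produces the claimed bound $q = \O(\epsilon^{\delta-2})$.

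The main obstacle will not be conceptual but computational bookkeeping: across the six regions of Definition \ref{def:global-analysis:global-metric-and-functions} one must consistently track the conformal rescalings $\Omega^{\pm 2}$, the radial weights $e^{\pm\delta\rho}$, and the $\epsilon$-dependent K\"ahler form scalings, verifying in each that the composition $\Omega^{-2}\Lambda^{-1}\Tf$ remains uniformly bounded and identifying the Taub-NUT/Atiyah-Hitchin core as the region that dictates the worst-case exponent. As noted in Remark \ref{rem:higher-derivatives}, all higher-derivative estimates then follow automatically from the $C^0$ control on each region.
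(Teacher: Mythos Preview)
Your proposal is correct and follows essentially the same route as the paper: the polarisation identity, Lemma~\ref{lem:inverse-function-theorem:derivative-non-linear-part}, the H\"older product rule, and then a region-by-region computation of the factor $e^{\delta\rho}\Omega^{2}$ with the worst case $\epsilon^{\delta-2}$ occurring on the Taub-NUT/Atiyah-Hitchin cores. Your bookkeeping is organised slightly differently (passing through weight $2\delta$ and then converting back, and separating the ``$\Omega^{-2}\Lambda^{-1}$ bounded as $\Omega^4\to\Omega^2$'' step from the coefficient-extraction factor $|\omega|_{g_{cf}}^{-1}\sim\Omega^{2}$), but the net exponent tally $e^{\delta\rho}\Omega^{2}$ agrees with the paper.
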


\begin{proof}
	Using the `identity' $a^2 - b^2 = (a+b)(a-b)$, the expression of for $N(\zeta) - N(\xi)$ can be rewritten as
	$$
	N(\zeta) - N(\xi) = 2 \Omega^{-2} \Lambda^{-1} \Tf(\d \d^* (\zeta + \xi) \wedge \d \d^* (\zeta - \xi)).
	$$
	Write $\zeta \pm \xi$ in local coordinates
	$$
	\zeta \pm \xi = \sum_{ij} (u^\pm_{ij} + \lambda^\pm_{ij} \phi) \: \omega_i \otimes e_j.
	$$
	The expansion of $N(\zeta) - N(\xi)$, gives us the following three types of terms
	\begin{align*}
		N_1 :=& 2 \Omega^{-2} \Lambda^{-1} \Tf(\d \d^* (u_{ij}^+ \: \omega_i \otimes e_j) \wedge \d \d^* (u_{kl}^- \: \omega_k \otimes e_l)), \\
		N_2 :=& \lambda_{kl}^- \cdot 2 \Omega^{-2} \Lambda^{-1} \Tf(\d \d^* (u_{ij}^+ \: \omega_i \otimes e_j) \wedge \d \d^* (\phi \: \omega_k \otimes e_l)), \text{ and}\\
		N_3 :=& \lambda_{ij}^+ \lambda_{kl}^- \cdot 2 \Omega^{-2} \Lambda^{-1} \Tf(\d \d^* (\phi \: \omega_i \otimes e_j) \wedge \d \d^* (\phi \: \omega_k \otimes e_l)).
	\end{align*}
	Using Lemma \ref{lem:inverse-function-theorem:derivative-non-linear-part} and the product rule, $N_1$ can be estimated by
	$$
	N_1 = \O(2 e^{\delta \rho} \Omega^{-2} \Lambda^{-1} \Tf(  \Vol^{g_{cf}})) \cdot \|u_{ij}^+\|_{C^{k+2, \alpha}_{\delta}(M_{B,n})} \cdot \|u_{kl}^-\|_{C^{k+2, \alpha}_{\delta}(M_{B,n})}.
	$$
	Recall that the map $\Tf$ projects the space of 3 by 3 matrices to its symmetric traceless subspace. This projection is uniformly bounded, and hence

	$$
	N_1 = \O(2 e^{\delta \rho} \Omega^{-2} \Lambda^{-1} (  \Vol^{g_{cf}})) \cdot \|u_{ij}^+\|_{C^{k+2, \alpha}_{\delta}(M_{B,n})} \cdot \|u_{kl}^-\|_{C^{k+2, \alpha}_{\delta}(M_{B,n})}.
	$$
	Using Theorem \ref{thm:almost-hk:global-symplectic-triple} and that $g$ is \hk outside the gluing region, we estimate the inverse of $\Lambda$, which yields
	$$
	N_1 = \O(e^{\delta \rho} \Omega^{2}) \cdot \omega \cdot \|u_{ij}^+\|_{C^{k+2, \alpha}_{\delta}(M_{B,n})} \cdot \|u_{kl}^-\|_{C^{k+2, \alpha}_{\delta}(M_{B,n})}.
	$$
	We conclude that $q$ must be at least of order $\O(e^{\delta \rho} \Omega^2)$. 
	We calculate $\O(e^{\delta \rho} \Omega^2)$ explicitly for each region of $M_{B,n}$, which are given in Definition \ref{def:global-analysis:global-metric-and-functions}. We summarise the estimates in Table \ref{table:final-estimates-error}.
	\begin{table}[b]
		\centering
		\begin{tikzpicture}
			\draw[very thick] (0,0) -- (6.5,0);
			\draw[very thick,dashed] (6.5,0) -- (8,0);
			\draw[very thick,-{latex}] (8,0) -- (10.25,0) node[below]{$r \to \infty$};
			\draw (0,3pt) -- (0,-3pt) node[below]{$0$};
			\draw (3.5,3pt) -- (3.5,-3pt) node[below]{$4 \epsilon^{\frac{2}{5}}$};
			\draw (5,3pt) -- (5,-3pt) node[below]{$5 \epsilon^{\frac{2}{5}}$};
			\draw (2.5,-3pt) -- (2.5,3pt) node[above]{$r_{AH} = R_3$};
			\draw (6,-3pt) -- (6,3pt) node[above]{$r_j = R_2$};
			\draw (8.5,-3pt) -- (8.5,3pt) node[above]{$r = R_1$};
			\draw
			(0,0) node[midway,yshift=-5.7em,xshift=-2.3em]{\begin{tabular}{l}
					$\Omega$ \\
					$\rho$ \\
					$\O(e^{\delta \rho} \Omega^2)$
			\end{tabular}};
			\draw [decorate,decoration={brace,amplitude=5pt,mirror,raise=4ex}]
			(0,0) -- (2.4,0) node[midway,yshift=-5.7em]{\begin{tabular}{c}
					$\frac{\sqrt{1 + \epsilon \alpha_j}}{\epsilon}$ \\
					$\log \left(\frac{\epsilon}{1 + \epsilon \alpha_j} R_3\right)$ \\ 
					$\O(\epsilon^{\delta-2})$
			\end{tabular}};
			\draw [decorate,decoration={brace,amplitude=5pt,mirror,raise=4ex}]
			(2.6,0) -- (5.9,0) node[midway,yshift=-6em]{\begin{tabular}{c}
					$r_j^{-1} (h^{q_j}_\epsilon)^{- \frac{1}{2}}$ \\
%					$\frac{\sqrt{1 + \epsilon \alpha_j}}{\epsilon} \cdot \frac{1}{r_{AH} \sqrt{1 - 2 r_{AH}^{-1}}}$ \\
					$\log(r_j)$ \\
%					$\log\left(\frac{\epsilon}{1 + \epsilon \alpha_j} r_{AH}\right)$ \\
					$\O(r_j^{\delta -2}) \le \O(\epsilon^{\delta -2})$
			\end{tabular}};
			\draw [decorate,decoration={brace,amplitude=5pt,mirror,raise=4ex}]
			(6.1,0) -- (8.4,0) node[midway,yshift=-5.7em]{\begin{tabular}{c}
					$1$ \\
					$1$ \\
					$\O(1)$
			\end{tabular}};
			\draw [decorate,decoration={brace,amplitude=5pt,mirror,raise=4ex}]
			(8.6,0) -- (10.75,0) node[midway,yshift=-5.7em]{\begin{tabular}{c}
					(Eq. \eqref{eq:def-omega-rho-near-infinity})\\ 
 					(Eq. \eqref{eq:def-omega-rho-near-infinity})\\ 
					$\O(1)$
			\end{tabular}};
			\draw (0,-2.5em) -- (0,-8em);
			\draw (-5em,-3em) -- (11,-3em);
			\draw (2.5,-2.5em) -- (2.5,-8em);
			\draw (6,-2.5em) -- (6,-8em);
			\draw (8.5,-2.5em) -- (8.5,-8em);
			
			\draw [decorate,decoration={brace,amplitude=5pt,raise=5ex}]
			(8,0) -- (11,0) node[midway,yshift=3.5em]{Asymptotic region};
			\draw [decorate,decoration={brace,amplitude=5pt,raise=5ex}]
			(0,0) -- (6.5,0) node[midway,yshift=3.5em]{Near singularity $q_j$};
		\end{tikzpicture}
		\caption{The estimate of $\O(e^{\delta \rho} \Omega^2)$ on the different regions of $M_{B,n}$.}
		\label{table:final-estimates-error}
	\end{table}
	The parameter $q$ attains its largest value inside the bubbles, and hence $q$ is at least of order $\O(\epsilon^{\delta - 2})$.

	We repeat this calculation for $N_2$ and $N_3$. Notice that for these cases, we only need to estimate the error on the asymptotic region of $M_{B,n}$, as $\phi = 0$ on the interior region. Terms like $N_2$ can be estimated by
	$$
	N_2 = \O(\Omega^{2}) \cdot \omega \cdot \|u_{ij}^+\|_{C^{k+2, \alpha}_{\delta}(M_{B,n})} \cdot |\lambda_{kl}^-|.
	$$
	By construction $\O(\Omega^2) = 1$ and so $q$ is still at least of order $\O(\epsilon^{\delta - 2})$. 
	
	Depending on the cases in Lemma \ref{lem:inverse-function-theorem:additional-estimates-non-linear-part}, $N_3$ can be estimated by
	$$
	N_3 = \begin{cases}
		\O(e^{- \delta \rho } \Omega^{2}) \cdot \omega \cdot |\lambda_{ij}^+| \cdot |\lambda_{kl}^-| & M_{B,n} \text{ is of type ALG/ALG*/ALH} \\
		\O(e^{(2- \delta) \rho } \Omega^{2}) \cdot \omega \cdot |\lambda_{ij}^+| \cdot |\lambda_{kl}^-| & M_{B,n} \text{ is of type ALH*}.
	\end{cases}
	$$
	In the former case $e^{- \delta \rho } = \frac{r^{- \delta}}{r^2 h_\epsilon}$, which is uniformly bounded. In the latter case, use that 
	$$
	h_\epsilon = 1 + \epsilon(A + B r + \O(e^{-r}))
	$$
	for some $A \in \R $ and $B > 0$ in order to get
	$$
	e^{(2 - \delta) \rho }\Omega^{2} 
	= \frac{r^{- \delta}}{h_\epsilon}
	% = r^{-1-\delta} \frac{1}{\epsilon B} \frac{\epsilon B r}{1 + \epsilon(A + \epsilon B + \O(e^{-r}))}.
	= r^{-1-\delta} \frac{1}{\epsilon B} 
	\left(1 - \frac{1 + \epsilon(A + \O(e^{-r}))}{h_\epsilon}.\right)
	= \O(\epsilon^{-1}).
	$$
	For $\epsilon$ sufficiently small, $\O(\epsilon^{-1}) < \O(\epsilon^{\delta -2})$, which concludes the result.
\end{proof}

Recall that our goal is to find a zero for Equation \eqref{eq:def-fct-F} using the inverse function theorem given in Theorem \ref{thm:hk-triple:inverse-function-theorem}.
According to Proposition \ref{prop:inverse-function-theorem:constant-part}, the constant part $F(0)$ of this equation is of order $\O(\epsilon^{\frac{11 - 2 \delta}{5}})$. In Section \ref{subsec:linearised-eq} we have shown that the 
linearised operator is invertible with uniformly bounded inverse. Proposition \ref{prop:inverse-function-theory:non-linear-part} implies that the non-linear part satisfies
$$
\|N(\zeta) - N(\xi) \|_{(C^{k, \alpha}_{\delta}(M_{B,n}) \cdot \omega) \otimes \R^3} \le \O(\epsilon^{\delta -2}) \cdot \|\zeta + \xi\| \cdot \|\zeta - \xi\|.
$$ 
Therefore, Theorem \ref{thm:hk-triple:inverse-function-theorem} can be applied if
$\epsilon^{\frac{11 - 2 \delta}{5}} \le \O(\epsilon^{2-\delta}).$
This is indeed true for sufficiently small $\epsilon$, and hence:
\begin{proposition}
	\label{prop:conclusion-inverse-function-theorem}
	For sufficiently small $\epsilon > 0$, there exists a triple of $\zeta_i \in C^{k, \alpha}_\delta(M_{B,n}) \cdot \omega$ (or $\zeta \in (C^{k, \alpha}_\delta(M_{B,n}) \oplus \R \phi) \cdot \omega$ when $B \not = \R^3$), such that 
	$$
	\omega_i + 2 \d \d^* \zeta_i
	$$
	is an orthonormal triple of closed $2$-forms, and the norm of $\zeta_i$ is of order $\O(\epsilon^{\frac{11 - 2 \delta}{5}})$.
\end{proposition}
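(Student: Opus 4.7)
The plan is to invoke Theorem \ref{thm:hk-triple:inverse-function-theorem} applied to the map $F$ of Equation \ref{eq:def-fct-F} between the Banach spaces $(C^{k+2,\alpha}_\delta(M_{B,n})\cdot\omega)\otimes\R^3$ (augmented by $\R\phi$ when $B\ne\R^3$) and $(C^{k,\alpha}_\delta(M_{B,n})\cdot\omega)\otimes\R^3$, using the three ingredients already assembled in this section. By construction, a zero of $F$ yields the desired triple: together with the gauge choice $u=0$, so that $a=2\d^*\zeta$, the conditions $\d^*a=0$ and Equation \ref{eq:hk-triple:diff-eq} combine to give the Donaldson hyperk\"ahler triple condition of Equation \ref{eq:hk-triple-condition}, which is exactly the statement that $\omega_i+2\d\d^*\zeta_i$ is an orthonormal triple of closed 2-forms.

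I would verify the three hypotheses of Theorem \ref{thm:hk-triple:inverse-function-theorem} in turn. For the constant part, Proposition \ref{prop:inverse-function-theorem:constant-part} gives $\|F(0)\|=\O(\epsilon^{(11-2\delta)/5})$. For the linearisation, I would use the Weitzenb\"ock decomposition of Equation \ref{eq:linearized-equation}: the main term $(\Omega^{-2}\Delta^g u_{ij})\omega_j$ is a uniform isomorphism with operator-norm bound $C$ on the inverse by Theorem \ref{thm:global-analysis:bounded-inverse}, while the remainder $-2\Omega^{-2}\nabla_{\nabla u_{ij}}\omega_j$ was estimated in Section \ref{subsec:linearised-eq} to be of size $\O(\epsilon^{7/5})$ in operator norm. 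A Neumann series argument then shows that $L$ itself is invertible with uniformly bounded inverse for all sufficiently small $\epsilon$. For the non-linear part, Proposition \ref{prop:inverse-function-theory:non-linear-part} provides the quadratic Lipschitz estimate with $q=\O(\epsilon^{\delta-2})$ valid on the entire space, so the radius $r$ in the inverse function theorem can be taken arbitrarily large and only the bound $\|F(0)\|<1/(4qC^2)$ needs to be checked.

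The remaining step is thus the exponent comparison: one needs $(11-2\delta)/5>2-\delta$, equivalently $3\delta>-1$, which is comfortably satisfied by the small $|\delta|$ prescribed in Theorem \ref{thm:global-analysis:bounded-inverse}. Theorem \ref{thm:hk-triple:inverse-function-theorem} then supplies a unique zero $\zeta$ of $F$ with $\|\zeta\|\le 2C\|F(0)\|=\O(\epsilon^{(11-2\delta)/5})$, proving the proposition via the translation in the first paragraph.

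The proposition is essentially the final assembly step, so I do not expect a serious new obstacle here; the genuine difficulties were already surmounted in Theorem \ref{thm:global-analysis:bounded-inverse} and in the regional calculation of $q$ in Proposition \ref{prop:inverse-function-theory:non-linear-part}. The only delicate point to double-check is that the exponent inequality $(11-2\delta)/5>2-\delta$ is strict, so that the smallness condition of the inverse function theorem is genuinely satisfied in the limit $\epsilon\to0$; this is precisely what forces $\delta>-1/3$ and retroactively justifies the requirement that $|\delta|$ be small.
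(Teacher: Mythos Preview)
Your proposal is correct and follows essentially the same approach as the paper: assemble the three hypotheses of Theorem \ref{thm:hk-triple:inverse-function-theorem} from Propositions \ref{prop:inverse-function-theorem:constant-part}, \ref{prop:inverse-function-theory:non-linear-part} and the work of Section \ref{subsec:linearised-eq}, then check the exponent inequality $(11-2\delta)/5>2-\delta$. Your treatment is in fact slightly more careful than the paper's, which simply asserts that $\epsilon^{(11-2\delta)/5}\le\O(\epsilon^{2-\delta})$ holds for small $\epsilon$ without isolating the resulting constraint $\delta>-1/3$; your Neumann-series handling of the $\nabla\omega$ error term is also a minor elaboration on what the paper leaves implicit.
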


\begin{proof}[Proof of Theorem \ref{main-theorem}]
	Given the data in the theorem, we constructed in Section \ref{sec:approximate-solution} a 4-manifold $M_{B,n}$ and a 1-parameter family of closed definite triples $\omega$ that are approximately \hk. By Theorem \ref{prop:conclusion-inverse-function-theorem}, there exists 
	a triple of $\zeta_i \in C^{k, \alpha}_\delta(M_{B,n}) \cdot \omega$ (or $\zeta \in (C^{k, \alpha}_\delta(M_{B,n}) \oplus \R \phi) \cdot \omega$ when $B \not = \R^3$), such that 
	$\omega_i + 2 \d \d^* \zeta_i$ is an orthonormal triple of closed $2$-forms. Using an elliptic bootstrap argument, one can show that $\omega_i + 2 \d \d^* \zeta_i$ is smooth and thus induces a \hk metric on $M_{B,n}$. Moreover, our genuine gravitational instanton differs from our approximate solution with an error of $\O(\epsilon^{\frac{11 - 2 \delta}{5}})$. Hence, for sufficiently small $\epsilon$, properties 1 to 3 of Theorem \ref{main-theorem} are satisfied.
\end{proof}

\section{Global properties of $M_{B,n}$}
\label{sec:setup:topology}
Before we finish this paper, we study the topology of our constructed \hk manifold and we compare the number of parameters in our construction to the dimension of the respective moduli spaces. By this we prove the remaining Propositions and Theorems stated in Section \ref{sec:results}.

% \begin{figure}[h!]
% 	\centering
% 	\includegraphics[width=0.7\linewidth]{images/topology/mtn3.png}
% 	\caption{{The multi-Taub-NUT space retracts to a wedge sum of 2-spheres.}}
% 	\label{fig:topology:mtn2}
% \end{figure}
\subsection{The topology of $M_{B,n}$}
In order to understand the topology of $M_{B,n}$ and its intersection form, we first revisit the topology of the multi-Taub-NUT space and the work of \mycite{Sen1997}. Namely, consider the multi-Taub-NUT space that has the ordered set of points $\{p_1, \ldots, p_n\} \subset \R^3$ as its singularities. On $\R^3$ one can find a path that goes through each point $p_i$ once. The Taub-NUT space retracts to the total space over this path, and using the explicit nature of the metric, one can show that the total space over this path is a chain of wedge sums of $n-1$ copies of $S^2$. According to \mycite{Sen1997}, the intersection matrix for these spheres is the negative Cartan matrix of a $A_{n-1}$-type Dynkin diagram.\\

\begin{figure}[h!]
	\centering
	\def\svgwidth{\linewidth}
	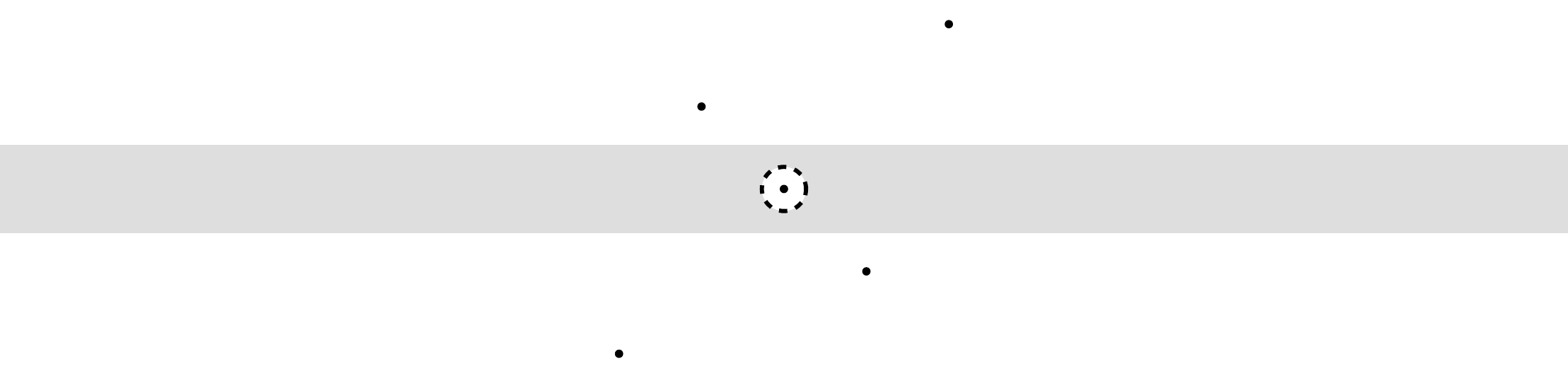
	\caption{{The underlying manifold $M_{\R^3, n}$ can be seen as the union of the Atiyah-Hitchin manifold and the Multi-Taub-NUT space.}}
	\label{fig:setup:mayer-vietoris-r3}
\end{figure}

Similarly, \mycite{Sen1997} argued that the intersection matrix for $M_{\R^3, n}$ is the negative Cartan matrix for a $D_n$ Dynkin diagram. In order to extend his argument to the other gravitational instantons, we first need to understand the homology groups of $M_{\R^3,n}$: 

\begin{lemma}
	The homology of $M_{\R^3, n}$ is given by
	$$
	H_k(M_{\R^3, n}) = \begin{cases}
	\Z &\text{if } k=0 \\
	\Z_2 &\text{if } k=1 \text{ and } n = 0 \\
	\Z^{n} &\text{if } k=2 \\
	0 & \text{otherwise}.
	\end{cases}
	$$
\end{lemma}
\begin{proof}
Decompose the underlying base-space $\R^3$ into two regions as shown in Figure \ref{fig:setup:mayer-vietoris-r3}, and apply the Mayer-Vietoris sequence. Namely, let $X := (-\delta, \delta) \times \R^2$ be a thin plate inside $B$ that does not contain any of the non-fixed singularities $\pm p_i$. Using rotation, $X$ can always be found. The complement of the base space $B'$ and the plate $X$ has two connected components, which can be identified using the antipodal map. Denote one of these connected components as $Y$. The antipodal map sends $X$ onto itself and therefore the bulk space $P/\Z_2$ can be written as
$$
P/\Z_2 = (P|_{X})/\Z_2 \cup P|_{Y}.
$$
From the gluing construction we identify $M_{\R^3,n}$ with $\widetilde{P|_{X}/\Z_2 }\cup \widetilde{P|_{Y}}$, where $\widetilde{P|_{X}/\Z_2 }$ is the connected sum of $P|_{X}/\Z_2$ with the Atiyah-Hitchin manifold and $\widetilde{P|_{Y}}$ is the connected sum of $P|_{Y}$ with $n$ copies of Taub-NUT. The space $P|_{X}/\Z_2$ retracts to its boundary at the origin, which, after the connected sum construction, will be identified with the asymptotic region of the Atiyah-Hitchin manifold. Because the Atiyah-Hitchin manifold retracts to $\R P^2$, $\widetilde{P|_{X}/\Z_2}$ must also retract to $\R P^2$. 

Like for the multi-Taub-NUT space, $\widetilde{P|_{Y}}$ is homotopic to the wedge sum of $n-1$ copies of $S^2$. 
In order to apply the Mayer-Vietoris sequence we need to calculate $ \widetilde{P|_{X}/\Z_2 } \cap \widetilde{P|_{Y}}$. Because the two connected components $\{\pm \delta\} \times \R^2$ of the boundary of $X$ are identified by the antipodal map, $\widetilde{P|_{X}/\Z_2 } \cap \widetilde{P|_{Y}}$ is diffeomorphic to a circle bundle over $\R^2$. Therefore, $\tilde{H}_k(M_{\R^3, n})$ is given by the following exact sequence:
\\

\centerline{
	\xymatrix{
		\ldots  & 
		\tilde{H}_k(M_{\R^3, n}) \ar[l] & 
		\tilde{H}_k(\R P^2) \oplus \tilde{H}_k(\bigvee_{i=1}^{n-1} S^2) \ar[l] & 
		\tilde{H}_k(S^1) \ar[l] & 
		\ldots \ar[l] 
}}

\vspace{0.5cm}

The only non-trivial step in this sequence is the map $\del \colon \tilde{H}_1(S^1) \to \tilde{H}_1(\R P^2)$. This map is the embedding of a fibre over a point into $\widetilde{P|_X/\Z_2}$. As explained in \cite{Schroers2020}, this fibre is homotopic to the generator of $H_1$ inside the Atiyah-Hitchin manifold and so $\del(1) = [1]$. With this in mind, one can show that the homology groups of $M_{\R^3, n}$ are
$$
H_k(M_{\R^3, n}) = \begin{cases}
	\Z &\text{if } k=0 \\
	\Z_2 &\text{if } k=1 \text{ and } n = 0 \\
	\Z^{n} &\text{if } k=2 \\
	0 & \text{otherwise}.
\end{cases}
$$	
\end{proof}
\begin{figure}[h!]
	\centering
	\includegraphics[width=0.9\linewidth]{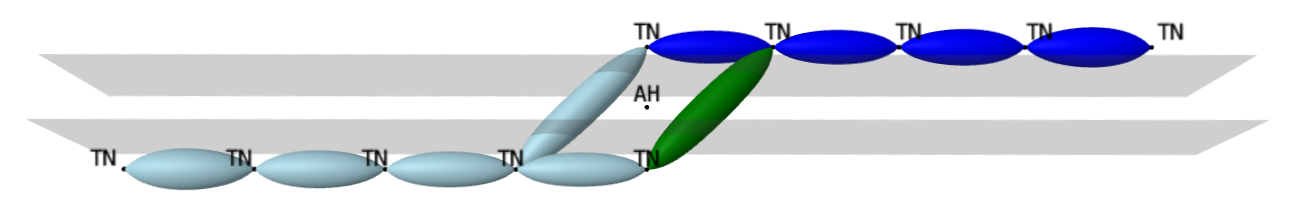}
	\caption{{
			Depiction of the $2$-cycles with self-intersection $-2$ inside $P|_{X} \cup \widetilde{P|_{Y}} \cup \: \Z_2 \cdot \widetilde{P|_{Y}}$.
			The grey planes depict the boundary between these regions. 
			The dark-blue and green spheres form a basis of $H_2(M_{\R^3,5})$ such that its intersection matrix is the negative Cartan matrix of $D_5$.	
			The light-blue spheres are the $\Z_2$ images of the other spheres.
	 }}
	\label{fig:topology:d5-alf}
\end{figure}

In order to show that the intersection matrix for can be given as the negative Cartan matrix for a $D_n$ Dynkin diagram, \mycite{Sen1997} started with the generators of $H_2(\widetilde{P|_Y})$ such that the intersection matrix is given by the negative Cartan matrix of $A_{n-1}$. In Figure \ref{fig:topology:d5-alf} these are depicted by dark-blue spheres. Inside $H_2(M_{\R^3, n})$, there is one extra generator and is given by a $2$-cycle that intersects the boundary between $\widetilde{P|_{X}/\Z_2}$ and $\widetilde{P|_{Y}}$ by two generators of $H_1(S^1)$.
According to \mycite{Sen1997}, this extra generator can be represented by a $2$-sphere with self-intersection $-2$ and this sphere can be similarly constructed as one of the spheres inside the multi-Taub-NUT. In Figure \ref{fig:topology:d5-alf} this extra representative is coloured green. 
Taking account of the orientation, \mycite{Sen1997} showed that for this basis of two-spheres the intersection matrix is the negative Cartan matrix of type $D_n$.

Although Sen only considered the case $n \ge 4$, his argument still works for the cases $2 \le n \le 3$. We show this explicitly in Table \ref{table:topology:Dn-alf}. Namely, we can still find the (dark-blue and green) spheres in $M_{\R^3, n}$ that generate $H_2(M_{\R^3,n})$. These spheres have self-intersection $-2$. Neighbouring spheres still intersect with multiplicity $\pm 1$ depending on the following rule found by \cite{Sen1997}: If the dark-blue and green spheres intersect, they give a factor of $+1$ to the intersection matrix. If the dark-blue and green spheres intersect a light-blue sphere, they induce a $-1$ factor to the intersection matrix.
Adding all the contributions yields the second column of Table \ref{table:topology:Dn-alf}. From these intersection matrices we can still find a corresponding Dynkin diagrams, which are given in the third and fourth column.

The case $n=1$ is special. We know that $H_2(M_{\R^3, n})$ has one generator $S$. In Table \ref{table:topology:Dn-alf}, $S$ is depicted as the green sphere. To find the self-intersection of $S$, we embed it inside $M_{\R^2, 2}$, which has $H_2(M_{\R^2, 2}) = \Z^2$. By comparing the pictures in Table \ref{table:topology:Dn-alf}, we see that $S$ is homologous to $(1,1) \in \Z^2 = H_2(M_{\R^2, 2})$. Using the intersection matrix for $M_{\R^2, 2}$, we conclude $S$ has self-intersection $-4$.

\begin{table}[h!]
	\centering
\begin{tabular}{c||c|c|c|c|}
	$n$ & $2$-cycles & Intersection matrix & Diagram & Type \\
	\hline \hline
	%%%%%%%%%%%%%%%
	4 & 
	\adjustbox{valign=m}{
	\includegraphics[width=0.30\linewidth,trim={4.5cm 2.5cm 4.5cm 2.5cm},clip]{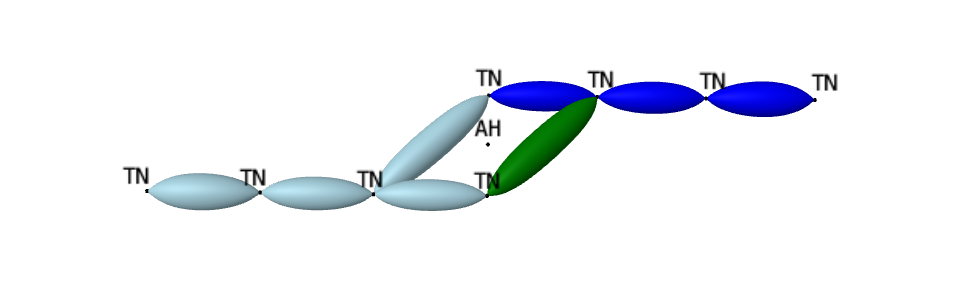}} &
	\adjustbox{valign=m, margin=0 0.2cm}{
	$\begin{pmatrix}
		-2 & 1  \\ 
		1 & -2 & 1 & 1\\
		& 1 & -2 & \\
		& 1 & & -2
	\end{pmatrix}$}&
	\adjustbox{valign=m}{\begin{tikzpicture}
			\draw[black] (0,0) -- (1,0);
			\draw[black] (1,0) -- (1.8,0.8);
			\draw[black] (1,0) -- (1.8,-0.8);
			\filldraw[black] (0,0) circle (2pt);
			\filldraw[black] (1,0) circle (2pt);
			\filldraw[black] (1.8,-0.8) circle (2pt);
			\filldraw[black] (1.8,0.8) circle (2pt);
%			\filldraw[black] (1,-1) circle (2pt);
	\end{tikzpicture}}
	&
    $D_4$ \\
	%%%%%%%%%%%%%%%
	\hline
3 & 
\adjustbox{valign=m}{
	\includegraphics[width=0.30\linewidth,trim={4.5cm 2.5cm 4.5cm 2.5cm},clip]{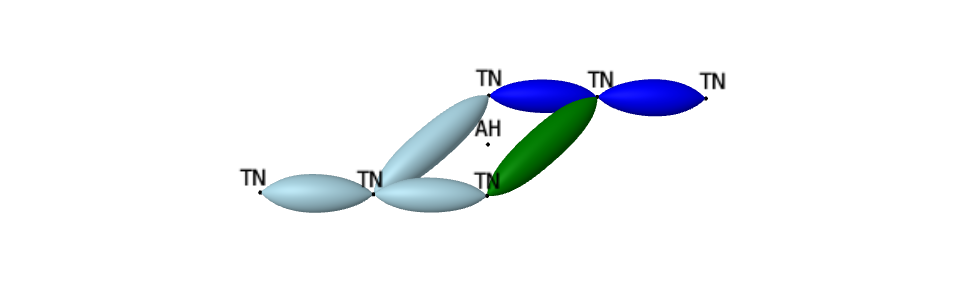}} &
\adjustbox{valign=m,margin=0 0.2cm}{
	$\begin{pmatrix}
		-2 & 1 & 1 \\ 
		1 & -2 & 0 \\
		1 & 0 & -2 & \\
	\end{pmatrix}$}&
\adjustbox{valign=m}{\begin{tikzpicture}
		\draw[black] (1,0) -- (2,0);
		\draw[black] (1,1) -- (1,0);
		\filldraw[black] (1,0) circle (2pt);
		\filldraw[black] (2,0) circle (2pt);
		\filldraw[black] (1,1) circle (2pt);
\end{tikzpicture}}
&
$A_3$ \\
	%%%%%%%%%%%%%%%
\hline
2 & 
\adjustbox{valign=m}{
	\includegraphics[width=0.30\linewidth,trim={4.5cm 2.5cm 4.5cm 2.5cm},clip]{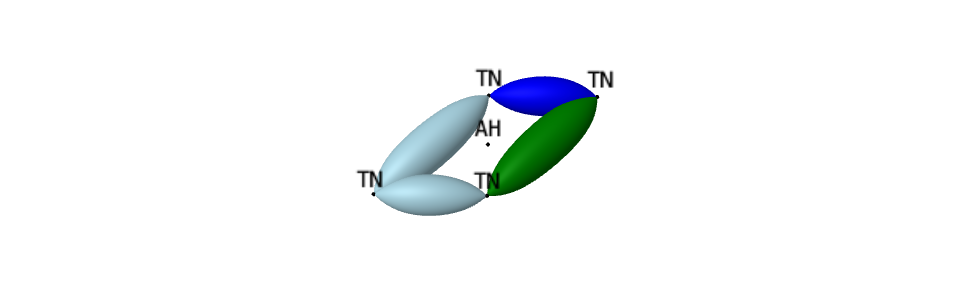}} &
\adjustbox{valign=m,margin=0 0.2cm}{
	$\begin{pmatrix}
		-2 & 0 \\
		0 & -2 & \\
	\end{pmatrix}$}&
\adjustbox{valign=m}{\begin{tikzpicture}
		\filldraw[black] (1,0) circle (2pt);
		\filldraw[black] (2,0) circle (2pt);
\end{tikzpicture}}
&
$A_1 + A_1$ \\
	%%%%%%%%%%%%%%%
\hline
1 & 
\adjustbox{valign=m,margin=0 0.2cm}{
	\includegraphics[width=0.30\linewidth,trim={4.5cm 0.5cm 4.5cm 0.5cm},clip]{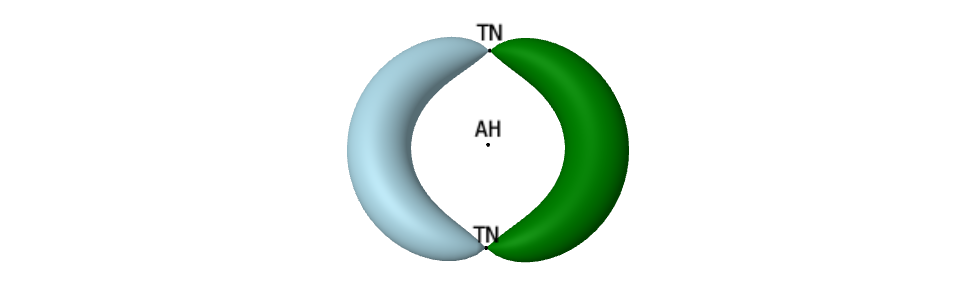}} &
\adjustbox{valign=m}{
	$\begin{pmatrix}
		-4 
	\end{pmatrix}$} & N/A & N/A. \\
\hline
\end{tabular}
		\caption{{Intersection matrices for $M_{\R^3,n}$ with $n \le 4$.}}
	\label{table:topology:Dn-alf}
\end{table}

\begin{proof}[Proof of Proposition \ref{prop:topology:R2xS1}]
	Let $X := (\R^2 \times (-\delta, \delta)) \cap B' \subseteq \R^2 \times S^1$ be a thin plate that does not contain any of the non-fixed singularities $\pm p_i$, but only contains one of the fixed-point singularities $q_j$. Again denote $Y$ as the complement of $X$ inside $B'$. Different to the $B = \R^3$ case, both $X$ and $Y$ are $\Z_2$ invariant under the antipodal map, and hence the bulk space $P/\Z_2$ can be written as
	$$
	P/\Z_2 = P|_{X}/\Z_2 \cup P|_{Y}/\Z_2.
	$$
	Like before, we consider $\widetilde{P|_{X}/\Z_2}$ and $\widetilde{P|_{Y}/\Z_2}$ as the completions of $P|_{X}/\Z_2$ and $P|_{Y}/\Z_2$.
	From the construction of $M_{\R^3, n}$, we identify the topology of $\widetilde{P|_{X}/\Z_2}$ and $\widetilde{P|_{Y}/\Z_2}$ with the topology of $M_{\R^3, 0}$ and $M_{\R^3, n}$ respectively. The intersection $\widetilde{P|_{X}/\Z_2} \cap \widetilde{P|_{Y}/\Z_2}$ must be an $S^1$-bundle over a plane, which retracts to a circle. Therefore, $\tilde{H}_k(M_{\R^3, n})$ is given by the 	following exact sequence:\\
	\centerline{
		\xymatrix{
			\ldots  & 
			\tilde{H}_k(M_{\R^2 \times S^1, n}) \ar[l] & 
			\tilde{H}_k(M_{\R^3, 0}) \oplus \tilde{H}_k(M_{\R^3, n}) \ar[l] & 
			\tilde{H}_k(S^1) \ar[l] & 
			\ldots \ar[l] 
	}}
	\vspace{0.5cm}
	
	The only non-trivial step in this sequence is the map $\del \colon \tilde{H}_1(S^1) \to \tilde{H}_1(M_{\R^3, 0}) \oplus \tilde{H}_1(M_{\R^3, n})$.
	This map again sends a fibre over a point into the generator of $H_1$ inside the Atiyah-Hitchin manifold. 
	We conclude that 
	$$
	\del(1) = \begin{cases}
		([1] , [1])  &\text{if } n = 0 \\
		[1] &\text{if } n \not = 0,
	\end{cases}
	$$
	and with this one can calculate the homology groups of $M_{\R^2 \times S^1, n}$ explicitly.
	
	Using Sen's method, one can construct the generators of $H_2(M_{\R^2 \times S^1,n})$ and calculate their intersection matrix. In Table \ref{table:topology:Dn-alg} these generators are explicitly given.
\end{proof}
\begin{table}[h!]
	\centering
	\begin{tabular}{c||c|c|c|c|}
		$n$ & $2$-cycles & Intersection matrix & Diagram & Type \\
		\hline \hline
		%%%%%%%%%%%%%%%
		4 & 
		\adjustbox{valign=m}{
			\includegraphics[width=0.25\linewidth,trim={2.5cm 2.5cm 2.5cm 2.5cm},clip]{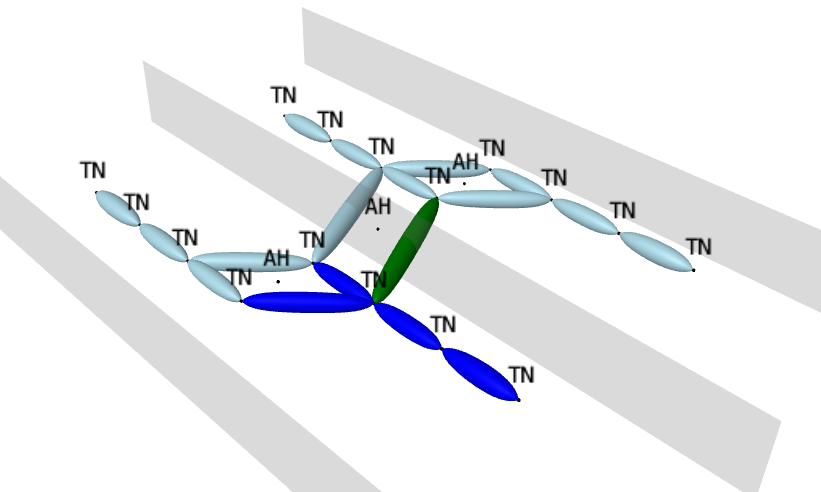}} &
		\adjustbox{valign=m, margin=0 0.2cm}{
			$\begin{pmatrix}
				-2 & 1  \\ 
				1 & -2 & 1 & 1 & 1\\
				& 1 & -2 & \\
				& 1 & & -2 \\
				& 1 & & & -2
			\end{pmatrix}$}&
		\adjustbox{valign=m, margin=0 0.2cm}{\begin{tikzpicture}
				\draw[black] (0,0) -- (2,0);
				\draw[black] (1,1) -- (1,-1);
				\filldraw[black] (0,0) circle (2pt);
				\filldraw[black] (1,0) circle (2pt);
				\filldraw[black] (2,0) circle (2pt);
				\filldraw[black] (1,1) circle (2pt);
				\filldraw[black] (1,-1) circle (2pt);
		\end{tikzpicture}}
		&
		$\tilde{D}_4$ \\
		%%%%%%%%%%%%%%%
		\hline
		3 & 
		\adjustbox{valign=m, margin=0 0.2cm}{
			\includegraphics[width=0.25\linewidth,trim={2.5cm 2.5cm 2.5cm 2.5cm},clip]{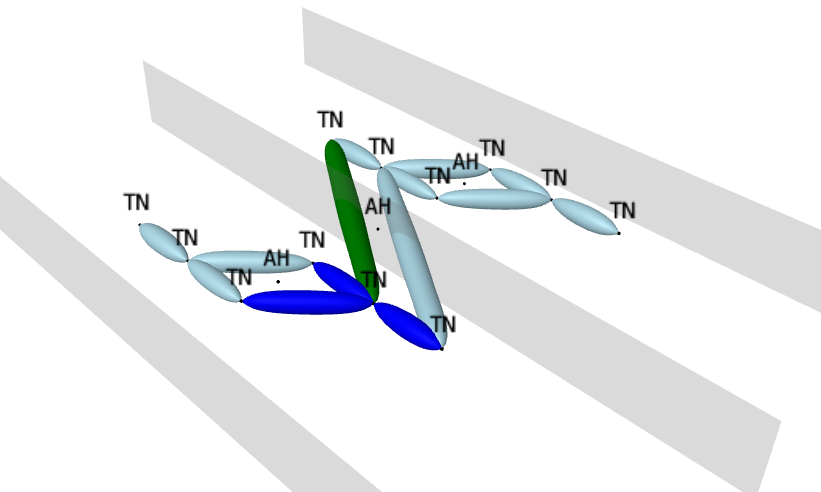}} &
		\adjustbox{valign=m,margin=0 0.2cm}{
			$\begin{pmatrix}
				-2 & 1 & 1 & 0 \\ 
				1 & -2 & 0 & 1\\
				1 & 0 & -2 & 1 \\
				0 & 1 & 1 & -2
			\end{pmatrix}$}&
		\adjustbox{valign=m}{\begin{tikzpicture}
				\draw[black] (1,0) -- (2,0);
				\draw[black] (1,0) -- (1,1);
				\draw[black] (2,1) -- (2,0);
				\draw[black] (2,1) -- (1,1);
				\filldraw[black] (1,0) circle (2pt);
				\filldraw[black] (2,0) circle (2pt);
				\filldraw[black] (1,1) circle (2pt);
				\filldraw[black] (2,1) circle (2pt);
		\end{tikzpicture}}
		&
		$\tilde{A}_3$ \\
		%%%%%%%%%%%%%%%
		\hline
		2 & 
		\adjustbox{valign=m, margin=0 0.2cm}{
			\includegraphics[width=0.25\linewidth,trim={2.5cm 2.5cm 2.5cm 2.5cm},clip]{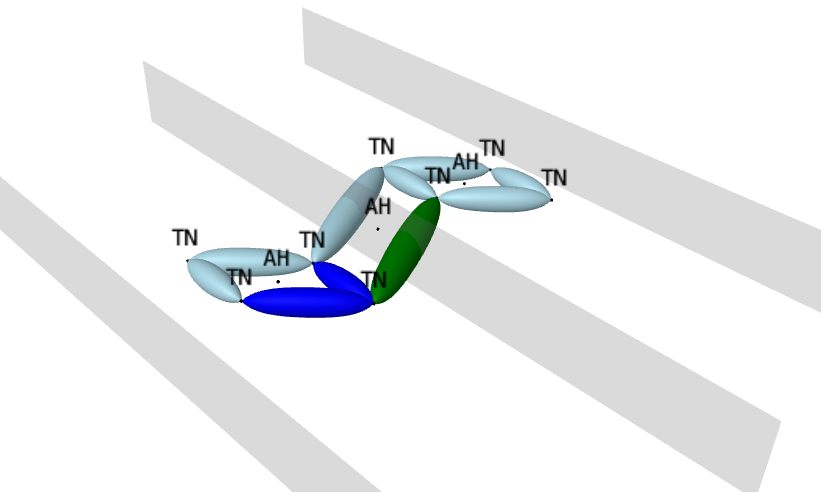}} &
		\adjustbox{valign=m,margin=0 0.2cm}{
			$\begin{pmatrix}
				-2 & 0 & 0\\
				0 & -2 & 2\\ 
				0 & 2 & -2 
			\end{pmatrix}$}&
		\adjustbox{valign=m}{\begin{tikzpicture}
				\filldraw[black] (0.5,1) circle (2pt);
				\filldraw[black] (0,0) circle (2pt);
				\filldraw[black] (1,0) circle (2pt);
				\draw[black] (0.5,0) ellipse (0.5 and 0.2);
		\end{tikzpicture}}
		&
		$A_1 + \tilde{A}_1$ \\
		%%%%%%%%%%%%%%%
		\hline
		1 & 
		\adjustbox{valign=m,margin=0 0.2cm}{
			\includegraphics[width=0.25\linewidth,trim={2.5cm 2.5cm 2.5cm 2.5cm},clip]{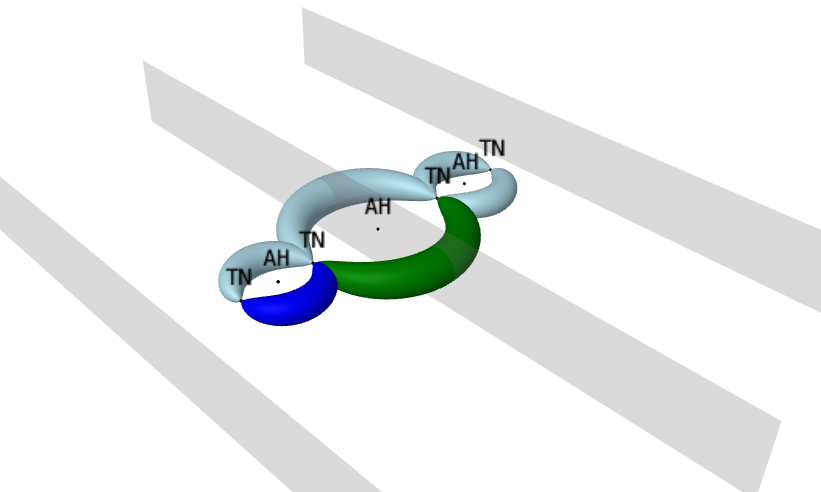}} &
		\adjustbox{valign=m}{
			$\begin{pmatrix}
				-4 & 4 \\
				4 & -4
			\end{pmatrix}$} & N/A & N/A. \\
		\hline
	\end{tabular}
	\caption{{Intersection matrices for $M_{\R^2 \times S^1,n}$. The dark-blue spheres are the generators of $H_2(M_{\R^3,n})$ given in Table \ref{table:topology:Dn-alf}. The green spheres are the extra $2$-cycles that are induced by the kernel of $\del \colon H_1(S^1) \to H_1(M_{\R^3, 0}) \oplus H_1(M_{\R^3, n})$. The light-blue spheres are the images of the dark-blue and green spheres under the antipodal map and the action of the lattice on $\R^3$. The gray planes depict the boundary of the fundamental domain of $\R^2 \times S^1$ inside its universal cover.
	}}
	\label{table:topology:Dn-alg}
\end{table}
%
% ALH*/ALH
%
\begin{proof}[Proof of Proposition \ref{prop:topology:RxT2}]
	The argument is identical to the argument given in Proposition \ref{prop:topology:R2xS1}, except for the fact that we view $M_{\R \times T^2, n}$ as the union of $M_{\R^2 \times S^1, 0}$ and $M_{\R^2 \times S^1, n}$ along an $S^1$-bundle over $ \R \times S^1$.
\end{proof}

\subsection{The moduli space}
We compare $M_{B, n}$ with the known classifications of gravitational instantons: ALE, ALF, ALG, ALG*, ALH and ALH*. In Section 6.4 of \cite{Sun2021}, there is an overview of all these classes including an explicit model at infinity for each of them.
Comparing these with the metric $g^{GH}$ and using the estimates found in Lemma \ref{lem:setup:harmonic-function}, we conclude that $M_{B,n}$ is of the following type:
\begin{table}[th!]
	\centering
	\begin{tabular}{llll}
		 &  & Definition in & \\
		Space & Class & \cite{Sun2021} & Remarks \\
		\hline
		$M_{\R^3,n}$ & ALF-$D_{n}$ & Definition 6.8(2) &  \\
		$M_{\R^2 \times S^1,n}$ & ALG*-$I^*_{4-n}$ & Definition 6.12(2) & $0 \le n < 4$ \\
		$M_{\R^2 \times S^1,4}$ & ALG$_{\frac{1}{2}}$ & Definition 6.11 &  \\
		$M_{\R \times T^2,n}$ & ALH*-$I_{8-n}$ & Definition 6.16(1) & $0 \le n < 8$ \\
		$M_{\R \times T^2,8}$ & ALH & Definition 6.15 &\\
	\end{tabular}
	\caption{{Classification of $M_{B,n}$ using the descriptions given in \cite{Sun2021}.}}
	\label{table:classification}
\end{table}

\begin{remark}
	The suffix $D_n$ in ALF-$D_n$ is not arbitrary: According to \mycite{Chen2019} Remark 6.3, gravitational instantons of type ALF-$D_n$ have an intersection matrix related to the $D_n$ Dynkin diagram. This is exactly what \mycite{Sen1997} found.
\end{remark}
\begin{remark}
	To understand the suffix for ALG* and ALH* manifolds, we refer to the work of \mycite{Chen2021ALG} and \mycite{Collins2020}. Namely, they showed that these gravitational instantons, which are constructed by \cite{Hein2010a} and \cite{Tian1990}, can be compactified by adding a singular Kodaira fibre of type $I^*_k$ or $I_k$ respectively.
\end{remark}
\begin{remark}
	Near infinity, ALG gravitational instantons approximate the metric of a flat torus bundle over a $2$-dimensional cone. The suffix in the ALG case is, up to a factor of $2 \pi$, the cone angle. For example, the cone angle for an ALG$_\frac{1}{2}$ manifold is $\frac{1}{2} \cdot 2 \pi$.
\end{remark}

\mycite{Chen2018} found a Torelli theorem for ALF-$D_n$ type gravitational instantons: Up to triholomorphic isometries all ALF-$D_n$ type gravitational instantons can be uniquely classified by their model at infinity and their periods. For these ALF spaces the model at infinity is fully determined by the degree of the circle bundle at infinity and the length of its fibre at infinity. These parameters correspond to the number of non-fixed singularities $p_i$ and $\epsilon$ respectively. To calculate the period one has to integrate the \hk triple over a basis of $H_2(M_{\R^3, n})$ where each element has self-intersection $-2$. There are 3 K\"ahler triples and the dimension of $H_2(M_{\R^3, n})$ is $n$. Hence, the moduli space of ALF metrics with a fixed model space is $3n$-dimensional. This number corresponds with the $n$ possible positions of the nuts in $\R^3$.

In \cite{Chen2021ALG} there is a Torelli Theorem for ALG* gravitational instantons. Here, the model at infinity is determined by the lattice and a global scale. Up to rotation, a one-dimensional lattice is only determined by the length of its generator and so the model at infinity is determined by two parameters.

In their paper they argue that the period map over count the moduli space. 
This is because $H_2(M_{\R^2\times S^1, n})$ has a $2$-cycle that is represented by a torus at infinity. This $2$-cycle can only reveal information of the model space, which is fixed. Therefore, the dimension of the moduli space of ALG* gravitational instantons with fixed model at infinity is $3 (\beta_2 -1)$, where $\beta_2$ is the second Betti number. Using Proposition \ref{prop:topology:R2xS1} we see that the dimension is $3n$. Again this corresponds to the $n$ possible positions of the non-fixed singularities in $\R^2 \times S^1$.

\mycite{Chen2021ALG} also found a Torelli theorem for ALG-type gravitational instantons. For this case, the model metric is determined by the length of the circle in the base space, the size of the circle fibre and the choice of connection. The space of connections is determined by $H^1(B, \R) / H^1(B, \Z)$, which is 1-dimensional for $B = \R^2 \times S^1$. Hence, the model metric is determined by three parameters. With the model metric fixed, \mycite{Chen2021ALG} argued that the dimension of the moduli space is $3 (\beta_2 -1) = 12$. Again we expect this, because we have $12$ degrees of freedom in choosing the location of the nuts.

% \cite{Collins2022Torello} and \cite{Lee2022torelliALH} determined the Torelli theorem for ALH* gravitational instantons. Up to triholomorphic isometries, all ALH*-type gravitational instantons can be uniquely identified by their model at infinity and the periods. The model at infinity is determined by the lattice and a global scale. 
% Up to rotation a 2-dimensional lattice is only determined the length its generators and their angle. With $3n$ degrees of freedom in choosing the location of the non-fixed singularities, we expect the total dimension of the moduli space to be $3n + 7$. In a recent survey paper by \cite{CollinsSurvey}, the dimension of the moduli space of Tian-Yau metrics was calculated and it agrees with our dimension count.

% \cite{Chen2021} found a Torelli theorem for ALH-type gravitational instantons and determined that the dimension of the moduli space is 33. This corresponds to a choice of lattice (3 dimensions), choice of scale (1 dimension), choice of connection (2 dimensions) and the position of 8 Taub-NUTs (24 dimensions).

According to \mycite{Hein2021} any ALH* gravitational instanton arises from the generalized Tian-Yau construction on the complement of a smooth anticanonical divisor of some weak del Pezzo surface. Given the degree of the anticanonical divisor, on can relate the Tian-Yau construction with $M_{\R \times T^2, n}$. Indeed, according to \mycite{Collins2021} and \mycite{Hein2021}, the complement of a del Pezzo surface of a smooth anticanonical divisor $D$ with $D^2 = d$ can be compactified to a rational elliptic surface by adding an $I_d$ fibre after performing a \hk rotation.
Using the classification described in Table \ref{table:classification}, we conclude that $M_{\R \times T^2,n}$ can only be compactified into a weak del Pezzo surface with anticanonical divisor of degree $8-n$ for $0 \le n < 8$.

Up to diffeomorphism there are 10 different weak del Pezzo surfaces: $\C P^2$, the blow-up of $\C P^2$ at up to 8 points and $S^2 \times S^2$. The degree of the anti-canonical divisor is $9-k$ for $\operatorname{Bl}_k \C P^2$ and 8 for $S^2 \times S^2$. As $M_{\R \times T^2, n}$ can never be compactified with an anti-canonical devisor of degree 9, we conclude

\begin{proposition}
	The gravitational instanton $\C P^2 \setminus D$, where $D$ is the anti-canonical divisor of degree 9 cannot be constructed using Theorem \ref{main-theorem}.
\end{proposition}

For $1 \le k \le 7$, there is a unique del Pezzo surface with anticanonical divisor of degree $k$. From this we immediately conclude part 1 of Proposition \ref{main-proposition-3}.

Up to diffeomorphism, there are two del Pezzo surfaces of degree 8, namely $S^2 \times S^2$ and $\operatorname{Bl}_1 \C P^2$. We claim that $\operatorname{Bl}_1 \C P^2$ cannot be used to construct $M_{\R \times T^2, 0}$.
\begin{proposition}
	The space $M_{\R \times T^2, 0}$ is not diffeomorphic to
the complement of a smooth anticanonical divisor of the blowup of $\C P^2$ at one point.
\end{proposition}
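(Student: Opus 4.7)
The plan is to distinguish the two candidate smooth $4$-manifolds by their first integer homology. By Proposition~\ref{prop:topology:RxT2} we have $H_1(M_{\R \times T^2, 0}) = \Z/2$, so it suffices to show $H_1(X \setminus D) = 0$ for $X := \operatorname{Bl}_1 \C P^2$ and every smooth anticanonical divisor $D$.

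To compute $H_1(X \setminus D)$, I first observe via adjunction and $(-K_X)^2 = 8$ that $D$ is a connected smooth elliptic curve with $D^2 = 8$, so a tubular neighbourhood $\nu(D)$ is a disk bundle over $T^2$ with Euler number $8$. Connectedness follows from $h^0(\O_D) = h^0(\O_X) = 1$, using the short exact sequence $0 \to \O_X(K_X) \to \O_X \to \O_D \to 0$ and Kodaira vanishing. Excision together with the Thom isomorphism then gives $H_2(X, X \setminus D) \cong H_0(D) = \Z$. Since $X$ is simply connected, the long exact sequence of the pair $(X, X\setminus D)$ collapses to
\begin{equation*}
	H_2(X) \xrightarrow{\,\Phi\,} \Z \longrightarrow H_1(X \setminus D) \longrightarrow 0,
\end{equation*}
with $\Phi([\alpha]) = [\alpha] \cdot [D]$. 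Writing $H_2(X) = \Z[H] \oplus \Z[E]$ with $H$ the pullback of a hyperplane and $E$ the exceptional divisor, and using $[D] = -[K_X] = 3[H] - [E]$, I get $[H] \cdot [D] = 3$ and $[E] \cdot [D] = 1$; since $\gcd(3,1) = 1$, the map $\Phi$ is surjective, so $H_1(X \setminus D) = 0$, contradicting $H_1(M_{\R \times T^2, 0}) = \Z/2$.

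The crux of the argument, and the only place where the specific geometry of $\operatorname{Bl}_1 \C P^2$ enters, is the surjectivity of $\Phi$: it hinges on the fact that the exceptional divisor meets a general anticanonical in exactly one point ($E \cdot (-K_X) = 1$). This single input is what distinguishes the blow-up case from $S^2 \times S^2$, for which $-K = 2F_1 + 2F_2$ forces the image of the analogous map to be $2\Z$ and produces $H_1 = \Z/2$, consistent with $M_{\R \times T^2, 0}$ and with the identification already proposed by the preceding proposition.
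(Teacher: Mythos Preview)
Your proof is correct and takes a genuinely different route from the paper's. The paper argues by contradiction via Mayer--Vietoris: assuming $M_{\R \times T^2, 0}$ compactifies to $\operatorname{Bl}_1 \C P^2$ by gluing in the disk bundle $D$ over the anticanonical torus, it analyses the boundary map $\del \colon H_2(\operatorname{Bl}_1 \C P^2) \to H_1(\del D)$, computes $\im \del \cong \Z_4$ and $\ker \del = \langle K^{-1}, C\rangle$ with $K^{-1}\cdot C = 0$, then forces $C = 4H + c\, K^{-1}$ and obtains the non-integer $c = -3/2$ from $K^{-1}\cdot C = 12 + 8c = 0$. Your argument is more direct: you compute $H_1$ of the anticanonical complement via the long exact sequence of the pair $(X, X\setminus D)$ and the Thom isomorphism, reducing everything to the single observation $E \cdot (-K_X) = 1$. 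Both proofs ultimately rest on the same input from Proposition~\ref{prop:topology:RxT2}, namely $H_1(M_{\R \times T^2, 0}) = \Z_2$ (the paper feeds it into the Mayer--Vietoris computation of $\im \del$), but your approach isolates the obstruction cleanly as a mismatch of a single invariant, and your closing remark explains transparently why the analogous computation for $S^2 \times S^2$ yields $\Z_2$ instead of $0$.
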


\begin{proof}
	Assume the opposite. Then $M_{\R \times T^2, 0}$ can be compactified to the blowup of $\C P^2$ by gluing a disk bundle $D$ at infinity. The boundary $\del D$ is an $S^1$-bundle over $T^2$ of degree 8. These identifications yields the following Mayer-Vietoris sequence:
	$$
	\ldots 
	\to \tilde{H}_k(\del D) 
	\xrightarrow{\alpha_k} \tilde{H}_k(D) \oplus \tilde{H}_k(M_{\R \times T^2, 0}) 
	\xrightarrow{\beta_k} \tilde{H}_k(\operatorname{Bl}_1 \C P^2) 
	\to \ldots
	$$
	with
	$$
	\tilde{H}_k(\del D) = \begin{cases}
		\Z^2 \oplus \Z_8 &\text{if } k=1\\
		\Z^{2} &\text{if } k=2 \\
		\Z &\text{if } k=3 \\
		0 & \text{otherwise},
	\end{cases}
	\qquad
	\tilde{H}_k(D) = \begin{cases}
		\Z^2 &\text{if } k=1 \\
		\Z &\text{if } k=2 \\
		0 &\text{if } k=3 \\
		0 & \text{otherwise},
	\end{cases}
	$$
	$$
	\tilde{H}_k(M_{\R \times T^2, 0}) = \begin{cases}
		\Z_2 &\text{if } k=1\\
		\Z^{3} &\text{if } k=2 \\
		0 & \text{otherwise},
	\end{cases}
	\qquad
	\tilde{H}_k(\operatorname{Bl}_1 \C P^2) = \begin{cases}
		\Z^2 &\text{if } k=2\\
		\Z &\text{if } k=4 \\
		0 & \text{otherwise}.
	\end{cases}
	$$
	Let $\del \colon H_2(Bl_1 \C P^2) \to H_1(\del D)$ be the boundary map. 
	Our goal is to reach a contradiction by showing that 
	\begin{equation}
		\label{eq:short-exact-sequence-boundary-map}
		0 \to \ker \del \to H_2(Bl_1 \C P^2) \xrightarrow{\del} \im \del \to 0
	\end{equation}
	cannot be exact.

	Consider the map $\alpha_1\colon H_1(\del D) \to H_1(D) \oplus H_1(M_{\R \times T^2, 0})$ from the Mayer-Vietoris sequence. We first study $\im \del = \ker \alpha_1$. 
	From the Gysin sequence it follows that the free part of $H_1(\del D)$ is generated by the homology of the base space of $D$. Therefore the map $\alpha_1$ is of the form
	$$
	\alpha_1\colon \Z^2 \oplus \Z_8 \to \Z^2 \oplus \Z_2 \qquad (x,0) \mapsto (x, \ldots).
	$$
	The Mayer-Vietoris sequence also implies $\alpha_1$ is surjective and so $\alpha_1(0,1) = (0,1)$. This concludes $\im \del = \ker \alpha$ is isomorphic to $\Z_4$.

	Secondly, consider the maps $\alpha_2\colon H_2(\del D) \to H_2(D) \oplus H_2(M_{\R \times T^2, 0})$ and $\beta_2 \colon H_2(D) \oplus H_2(M_{\R \times T^2, 0}) \to H_2(Bl_1 \C P^2)$ from the Mayer-Vietoris sequence.
	We study $\ker \del = \im \beta_2$. By the first isomorphism theorem $\im \beta_2 = \frac{H_2(D) \oplus H_2(M_{\R \times T^2, 0})}{\im \alpha_2}$.
	
	Using the Mayer-Vietoris sequence that proves Proposition \ref{prop:topology:RxT2}, one can explicitly determine $H_2(M_{\R \times T^2, 0}) = \Z^3$. Namely, given some $r_1, r_2 \gg 1$ and $t_1, t_2 \in S^1$, two generators of $H_2(M_{\R \times T^2, 0})$ can be identified as the circle fibre over 
	$\{r_1\} \times S^1 \times \{t_1\}$ and $\{r_2\} \times \{t_2\} \times S^1$ inside $\R \times T^2$. That is, these generators are two distinct non-intersecting tori at infinity. The third generator can be identified with a $2$-cycle that is contained on the interior of $M_{\R\times T^2, 0}$.
	In summary, $H_2(M_{\R \times T^2, 0}) = \Z \oplus H_2(\del D)$, and so $\ker \del = \im \beta_2 = \H_2(D) \oplus \Z = \Z^2$.

	We explicitly give the generators of $\ker \del = \im \beta_2$. The first generator is the smooth anti-canonical divisor $K^{-1}$ of $\operatorname{Bl}_1\C P^2$ to which $D$ retracts.
	The second generator is the generator of $H_2(M_{\R \times T^2, 0}) = \Z \oplus H_2(\del D)$ that
	is contained in the interior of $M_{\R \times T^2, 0}$.
	We denote this generator as $C$. 
	
	Simultaneously, $H_2(Bl_1\C P^2)$ is generated by $K^{-1}$ and the generator $H$ of $H_2(\C P^2)$.
	In summary, the short exact sequence in Equation \eqref{eq:short-exact-sequence-boundary-map} simplifies to
	$$
	0 \to \langle K^{-1}, C \rangle \to \langle K^{-1}, H \rangle \xrightarrow{\del} \Z_4 \to 0,
	$$
	and this implies $C = 4 H + c \cdot K^{-1}$ for some $c \in \Z$.
	Both $C$ and $K^{-1}$ can be identified with $2$-cycles on the interior of their respective domains, and hence $K^{-1} \cdot C = 0$. Recall that on $Bl_1 \C P^2$, $H \cdot K^{-1} = 3$. 
	Therefore, 
	$$
	K^{-1} \cdot C = 4 K^{-1} \cdot H + c \: K^{-1} \cdot K^{-1} = 12 + 8c = 0.
	$$
	This implies $c = - \frac{3}{2}$, which is not an integer.
\end{proof}

Because $M_{T^2, 0}$ is not diffeomorphic to $\operatorname{Bl}_1 \C P^2$, it must be diffeomorphic to $S^2 \times S^2$. This concludes the remaining part of Proposition \ref{main-proposition-3}.

\appendix
\section{Elliptic analysis for ALH/ALH*-gravitational instantons}
\label{sec:appendix}
Almost all elliptic analysis needed for this paper is already done in \mycite{Salm001}. However, the choice of weight function in that paper is not suitable for ALH* manifolds. Namely, for the ALH/ALH* case he considered weighted analysis for \textit{exponentially decaying} functions. As a helpful reviewer pointed out, the non-linear term in Proposition \ref{prop:inverse-function-theory:non-linear-part} may also contain \textit{polynomially decaying} terms.

In this appendix, we set up a weighted norm for $M_{\R \times T^2, n}$, so that we can study polynomially decaying functions. We show that in this new case, the elliptic estimates for $\Omega^{-2} \Delta^g$ are also uniformly bounded in $\epsilon$. We also show that $\Omega^{-2} \Delta^g$ is Fredholm under this new norm, and that
$$
\Omega^{-2} \Delta^g \colon 
	C^{k+2, \alpha}_\delta(M_{B,n}) \oplus \R \phi 
\to C^{k, \alpha}_\delta(M_{B,n})
$$ 
is an isomorphism.

Although we will set up a new weighted norm, most of the calculations are already done in \mycite{Salm001}. Namely, the local weighted analysis of ALH/ALH* gravitational instantons shares many similarities with the local weighted analysis for ALG/ALG* gravitational instantons. Therefore, this appendix provides a brief explanation of the modifications needed to the arguments in \mycite{Salm001}.

\subsection{Weighted analysis}
In this appendix we consider the setup given in Section \ref{sec:weighted-analysis-of-functions}. Explicitly, let $\Omega$ and $\rho$ be strictly positive, smooth functions on $M_{\R \times T^2, n}$  such that outside some large compact set we have
\begin{equation}
	\label{eq:appendix:def-omega-rho-near-infinity}
	\Omega := r^{-1} h_\epsilon^{-\frac{1}{2}}
	\quad\text{and}\quad
	\rho := \log r.
\end{equation}
Let $\widetilde{\Vol}$ be a volume form on $M_{\R \times T^2, n}$ such that outside some large compact set it is given by
$$
\widetilde{\Vol} = \d\!\rho \wedge \Vol_{T^2} \wedge \eta.
$$
Next, consider the conformally rescaled norm $g_{cf} := \Omega^2 g$ and let $C^{k, \alpha}_{cf}$ be the standard H\"older norm with respect to $g_{cf}$. For $\delta \in \R$, we define the weighted H\"older and Sobolev norms
\begin{align*}
	\|u\|_{C^{k, \alpha}_{\delta}(M_{\R \times T^2,n})} =& \|e^{- \delta \rho} u\|_{C^{k, \alpha}_{cf}(M_{\R \times T^2,n})}, \\
	\|u\|^2_{{L}^{2}_{\delta}(M_{\R \times T^2,n})} =& \int_{M_{\R \times T^2, n}} \left|e^{- \delta \rho} u\right|^2 \widetilde{\Vol}, \\
	\|u\|^2_{W^{k, 2}_{\delta}(M_{\R \times T^2,n})} =& 
	\sum_{j=0}^k \int_{M_{\R \times T^2, n}} \left|\nabla^j_{g_{cf} }(e^{- \delta \rho} u)\right|^2_{g_{cf}} \widetilde{\Vol}.
\end{align*}
These norms are almost the same as in \cite{Salm001}, but in that paper he considered $\Omega = h_\epsilon^{-\frac{1}{2}}$ and $\rho = r$.

In this section, we show that there are weighted estimates that are uniformly bounded in $\epsilon$. This will be a crucial result needed in Theorem \ref{thm:global-analysis:bounded-inverse}.

\begin{proposition}
	\label{prop:appendix:elliptic-regularity}
	Let $k \in \N$, $\alpha \in (0,1)$, $\delta \in \R$ and let $P$ be the asymptotic region of $M_{\R \times T^2, n}$ as defined in Definition \ref{def:setup:principal-bundle}. Let 
	$P'$ be a neighbourhood of $P$. There exists a $C > 0$, uniformly in $\epsilon$, such that if $u \in C^{k+2, \alpha}_{\delta}(P')$ (or $u \in W^{k+2, 2}_{\delta}(P')$), then
	\begin{align*}
		\|u\|_{C^{k+2, \alpha}_\delta(P)} \le& C \left[
	\|\Omega^{-2} \Delta^g u\|_{C^{k, \alpha}_\delta(P')}
	+
	\|u\|_{C^{0}_\delta(P')}
	\right], \text{or} \\
	\|u\|_{W^{k+2, 2}_\delta(P)} \le& C \left[
	\|\Omega^{-2} \Delta^g u\|_{W^{k, 2}_\delta(P')}
	+
	\|u\|_{L^{2}_\delta(P')}
	\right] \text{respectively.}
	\end{align*}
\end{proposition}
\begin{proof}
	We follow the proofs of Theorem 1.4 and Theorem 2.13 in \cite{Salm001}. These proofs consist of the following steps:
	\begin{enumerate}
		\item Show that the universal cover $\hat{P}$ of $P$ has bounded geometry and that the bounds of this geometry are uniform with respect to $\epsilon$. That is, show that (the derivatives of) the curvature are uniformly bounded in $\epsilon$ and show that the injectivity radius is uniformly bounded below.
		\item Show that $\Omega^{-2} \Delta^g \colon 
		C^{k+2, \alpha}_\delta(M_{B,n}) \oplus \R \phi 
	\to C^{k, \alpha}_\delta(M_{B,n})$ is a uniformly elliptic operator on $\hat{P}$.
		\item Relate the elliptic estimates on $\hat{P}$ to estimates on $P$.
	\end{enumerate}
	To show that the curvature tensor of $\hat{P}$ is uniformly bounded in all its derivatives, one can use the Koszul formula on the orthonormal co-frame
	$
	\{\d \rho, e^{-\rho} \d \theta, e^{-\rho}\d \phi, \frac{\epsilon}{e^{\rho} h_\epsilon} \eta \}.
	$
	In practice, we only need to take the exterior derivative of this co-frame and the only non-zero terms are
	\begin{align*}
		\d (e^{-\rho} \d \theta) =& - \d \rho \wedge e^{-\rho} \d \theta, \quad
		\d (e^{-\rho} \d \phi) = - \d \rho \wedge e^{-\rho} \d \phi, \text{ and } \\
		\d \left(\frac{\epsilon}{e^{\rho} h_\epsilon} \eta\right) =& - ( \d \rho + \d \log h_\epsilon)\wedge
		\frac{\epsilon}{e^{\rho} h_\epsilon} \eta + *^{cf} (\d \log h_\epsilon \wedge \frac{\epsilon}{e^{\rho} h_\epsilon} \eta).
	\end{align*}
	As in the ALG/ALG* case, the derivatives of $\d \log h_\epsilon$ provide bounds on (the derivatives of) the curvature tensor. The former are uniformly bounded, because Lemma \ref{lem:setup:harmonic-function} implies $h_\epsilon = 1 + \epsilon A + \epsilon B e^{\rho} + \O(e^{- \rho})$ for some $A, B > 0$. Thus,
	$$
	\d \log h_\epsilon = \frac{\epsilon B e^{\rho}}{h_\epsilon} \d \rho + \O(e^{- \rho})
	= \left(1 -\frac{1 + \epsilon A}{h_\epsilon} \right) \d \rho + \O(e^{- \rho}).
	$$

	Next we show that $\hat{P}$ has a uniform lower bound on the injectivity radius. As in \cite{Salm001} we use \cite{Cheeger1982}, which states that it is sufficient to show that a geodesic ball has quartic volume growth. 
	Hence, fix $R > 0$ and consider the `rectangular' neighbourhood
	$$
	\operatorname{Rect}_R (\rho_0) := 
	\left\{ (\rho, \theta, \phi, t) \in \hat{P} \colon  
	|\rho- \rho_0| < R, |\theta| < e^\rho R, |\phi| < e^\rho R, |t| < \frac{e^{\rho}h_\epsilon}{\epsilon} R \right\}.
	$$
	This rectangle has a circumscribed ball and an inscribed ball, whose radii do not depend on $\epsilon$.
	Therefore, it is sufficient to determine the volume growth of $\operatorname{Rect}_R (\rho_0)$, which is $(2 R)^4$ by construction.

	Having bounded geometry on $\hat{P}$, we can now study the operator $\Omega^{-2} \Delta^g \colon 
	C^{k+2, \alpha}_\delta(\hat{P}) \to C^{k, \alpha}_\delta(\hat{P})$. As explained in \cite{Salm001}, it is equivalent to studying $L_\delta\colon 
	C^{k+2, \alpha}_{cf}(\hat{P}) \to C^{k, \alpha}_{cf}(\hat{P})$, which is defined as
	$$
	L_\delta := e^{- \delta \rho} \Omega^{-2} \Delta^g(e^{\delta \rho} \ldots).
	$$
	Expanding this explicitly, one gets
	$$
	L_\delta u = \Delta_{g_{cf}} u + 2 \langle \d \log \Omega - \delta \d \rho, \d u\rangle_{g_{cf}}
	+ u \left(
		- \delta^2 - \delta \nabla^*_{cf} \d \rho + 2\delta \langle \d \log \Omega, \d \rho \rangle_{g_{cf}}
	\right).
	$$
	Therefore, $L_\delta$ is uniformly elliptic with respect to $\epsilon$ if and only if $\d \log \Omega$ is uniformly bounded. This is indeed true by the definition of $\Omega$.

	With all these results we can consider the elliptic estimates for $\Omega^{-2} \Delta^g$. According to Theorem 1.2 in \cite{Hebey1999}, there exists an $R_h > 0$, independent of $\epsilon$, such that on any ball of radius $R_h$ in $\hat{P}$ there are so-called harmonic coordinates. In these coordinates $g_{cf}$ is $C^{k, \alpha}$-equivalent to the flat metric. Hence, the standard Schauder estimate implies that for all $\rho_0 > 0$,
	\begin{align*}
		\|u\|_{C^{k+2, \alpha}_\delta(\operatorname{Rect}_{\frac{R_h}{2}} (\rho_0))} \le& C \left[
	\|\Omega^{-2} \Delta^g u\|_{C^{k, \alpha}_\delta(\operatorname{Rect}_{R_h} (\rho_0))}
	+
	\|u\|_{C^{0}_\delta(\operatorname{Rect}_{R_h} (\rho_0))} 
	\right], \\
	\|u\|_{W^{k+2, 2}_\delta(\operatorname{Rect}_{\frac{R_h}{2}} (\rho_0))} \le& C \left[
	\|\Omega^{-2} \Delta^g u\|_{W^{k, 2}_\delta(\operatorname{Rect}_{R_h} (\rho_0))}
	+
	\|u\|_{L^{2}_\delta(\operatorname{Rect}_{R_h} (\rho_0))} 
	\right],
	\end{align*}
	for some uniform $C > 0$.
	Finally, notice that $\operatorname{Rect}_{R_h} (\rho_0)$ contains approximately $\frac{e^{3 \rho_0} R_h^3}{\epsilon}$ fundamental domains. By keeping track of this multiplicity in the volume form and using the same standard patching arguments as done in \cite{Salm001}, we get the results stated in the proposition.
\end{proof}

\subsection{Fredholm theory}
Having these uniform estimates we can ask whether $\Omega^{-2} \Delta^g$ is Fredholm under these new norms. Like in \cite{Salm001}, this is a direct consequence of the following Proposition:

\begin{proposition}
	\label{prop:appendix:Fredholm-estimate}
	Let $k \in \N$, $\alpha \in (0,1)$, $\delta \in \R \setminus \{0,1\}$ and let $P$ be the asymptotic region of $M_{\R \times T^2, n}$ as defined in Definition \ref{def:setup:principal-bundle}. Let 
	$P'$ be a neighbourhood of $P$, such that $P' \setminus P$ is compact.
	There exists a $C > 0$, uniformly in $\epsilon$, such that if $u \in C^{2, \alpha}_{\delta}(P')$ (or $u \in W^{2, 2}_{\delta}(P')$), then
	\begin{align*}
		\|u\|_{C^{2, \alpha}_\delta(P)} \le& C \left[
	\|\Omega^{-2} \Delta^g u\|_{C^{0, \alpha}_\delta(P')}
	+
	\|u\|_{C^{0}_\delta(P'\setminus P)}
	\right], \text{or} \\
	\|u\|_{W^{2,2}_\delta(P)} \le& C \left[
	\|\Omega^{-2} \Delta^g u\|_{L^{2}_\delta(P')}
	+
	\|u\|_{L^{2}_\delta(P'\setminus P)}
	\right] \text{respectively.}
	\end{align*}
\end{proposition}
\begin{proof}
	The argument is identical to the proof of Theorem 1.5 in \cite{Salm001}. Also, the argument is identical for H\"older spaces and Sobolev spaces.
	Namely, we split $u$ into the Fourier modes of its various fibrations. For example, write $u = u_b + u_f$, where $u_b(x) = \frac{1}{2 \pi}\int_{S^1 \cdot \{x\}} u \eta$. Proposition 3.4 in \cite{Salm001} gives us a Poincar\'e inequality
	$$
	\|u_f\|_{C^0_{cf}(S^1 \cdot \{x\})} \le 2 \pi \frac{\epsilon \Omega}{\sqrt{h_\epsilon}} \|\d u_f\|_{C^0_{cf}(S^1 \cdot \{x\})},
	$$
	which combined with Proposition \ref{prop:appendix:elliptic-regularity},
	yields the estimate
	$$
	\|u_f\|_{C^{2, \alpha}_\delta(P)} \le C \left[
	\|\Omega^{-2} \Delta^g u_f\|_{C^{0, \alpha}_\delta(P')}
	+
	\|u_f\|_{C^{0}_\delta(P'\setminus P)}
	\right],
	$$
	where one may need to redefine $P$ and $P'$.
	Like in \cite{Salm001}, we repeat the above argument for all fibrations in $\R \times T^2$. In the end we write $u_b(r, \theta, \phi) = u_{bb}(r) + u_{bf} (r, \theta, \phi)$, for which we have
	$$
	\|u_{bf}\|_{C^{2, \alpha}_\delta(P)} \le C \left[
	\|\Omega^{-2} \Delta^g u_{bf}\|_{C^{0, \alpha}_\delta(P')}
	+
	\|u_{bf}\|_{C^{0}_\delta(P'\setminus P)}
	\right].
	$$
	If we have the estimate
	\begin{equation}
		\label{eq:appendix:fredholm-on-line}
		\|u_{bb}\|_{C^{2, \alpha}_\delta(P)} \le C \left[
	\|\Omega^{-2} \Delta^g u_{bb}\|_{C^{0, \alpha}_\delta(P')}
	+
	\|u_{bb}\|_{C^{0}_\delta(P'\setminus P)}
	\right],
	\end{equation}
	then Lemmas 3.2 and 3.3 in \cite{Salm001} imply
	\begin{align*}
		\|u\|_{C^{2, \alpha}_\delta(P)} 
		\le& \|u_{bb}\|_{C^{2, \alpha}_\delta(P)} + \|u_{bf}\|_{C^{2, \alpha}_\delta(P)} + \|u_f\|_{C^{2, \alpha}_\delta(P)} \\
		\le & C \left[
		\|\Omega^{-2} \Delta^g u_{bb}\|_{C^{0, \alpha}_\delta(P')} 
		+ \|\Omega^{-2} \Delta^g u_{bf}\|_{C^{0, \alpha}_\delta(P')} 
		+ \|\Omega^{-2} \Delta^g u_{f}\|_{C^{0, \alpha}_\delta(P')} 
		\right. \\
		& \left. \qquad
		+ \|u_{bb}\|_{C^{0}_\delta(P'\setminus P)}
		+ \|u_{bf}\|_{C^{0}_\delta(P'\setminus P)}
		+ \|u_{f}\|_{C^{0}_\delta(P'\setminus P)}
		\right] \\
		\le & C \left[
		\|\Omega^{-2} \Delta^g u\|_{C^{0, \alpha}_\delta(P')} 
		+ \|u\|_{C^{0}_\delta(P'\setminus P)}
		\right].
	\end{align*}
	This is the estimate we need to prove.

	To prove Equation \eqref{eq:appendix:fredholm-on-line}, denote $f_{bb} =\Delta^g u_{bb}$. We will find another $\tilde u \in C^{2, \alpha}_{\delta}(P')$, such that $\Omega^{-2} \Delta^g \tilde u = f_{bb}$ and $\tilde u$ only depends on $\rho$. 
	In this case, the equation $\Omega^{-2} \Delta^g \tilde u = f_{bb}$ simplifies to
	\begin{equation}
		\label{eq:appendix:equation-for-on-line}
		\frac{\del^2 \tilde{u}}{\del r^2} = - r^2 f_{bb}.
	\end{equation}
	Using variation of parameters, we find
	$$
	\tilde u = \int_{r_0}^r r^{-1} f_{bb} \d r - r \int_{r_1}^r r^{-2} f_{bb} \d r
	$$
	is a solution to Equation \eqref{eq:appendix:equation-for-on-line} for any choice\footnote{One might need to multiply $f_{bb}$ with a step function  in order to make this well-defined outside $P'$.} of $r_0$ and $r_1$.
	According to the proof of Lemma 6.2.1 in \cite{Pacard2008} if
	$$
	r_0 = \begin{cases}
		0 & \delta > 0 \\
		\infty & \delta < 0 \\
	\end{cases}
	\qquad \text{and} \qquad
	r_1 = \begin{cases}
		0 & \delta > 1 \\
		\infty & \delta < 1, \\
	\end{cases}
	$$
	then $\|\tilde u\|_{C^{0}_\delta(P)} \le C \|f_{bb}\|_{C^0_\delta(P')}$  or $\|\tilde u\|_{L^{2}_\delta(P)} \le C \|f_{bb}\|_{L^2_\delta(P')}$ whenever 
	$f \in C^0_\delta(P')$ or $f \in L^2_\delta(P')$ respectively.
	% Indeed,
	% \begin{align*}
	% 	|r^{- \delta} \tilde u| 
	% 	\le& r^{- \delta} \int_{r_0}^r r^{\delta -1} \cdot |r^{- \delta} f| \d r
	% 	+ r^{1 - \delta} \int_{r_1}^r r^{\delta -2} \cdot |r^{- \delta} f| \d r \\
	% 	%%%%%%%%%%%%%%%%
	% 	% \le& \|f_{bb}\|_{C^0_{\delta}(P')} \left(r^{- \delta} \int_{r_0}^r r^{\delta -1} \d r
	% 	% + r^{1 - \delta} \int_{r_1}^r r^{\delta -2} \d r \right)\\
	% 	%%%%%%%%%%%%%%%%%%%%%%%%
	% 	\le& \|f_{bb}\|_{C^0_{\delta}(P')} \left[
	% 		\frac{1}{\delta} \left(1 - r_0^{\delta} \cdot r^{- \delta}\right)
	% 		+ \frac{1}{\delta-1} \left(1 - r_1^{\delta-1} \cdot r^{1- \delta}\right)
	% 	\right] \\
	% 	\le& \|f_{bb}\|_{C^0_{\delta}(P')} \left[
	% 		\frac{1}{\delta} 
	% 		+ \frac{1}{\delta-1}
	% 	\right].
	% \end{align*}
	% (In the proof of Lemma 6.2.1 in \cite{Pacard2008} there is an argument that shows that $\|\tilde u\|_{L^{2}_\delta(P)} \le C \|f_{bb}\|_{L^2_\delta(P')}$ whenever $f \in L^2_\delta(P').$)

	Next, notice that $u_{bb} - \tilde u$ is harmonic and hence it must be equal to $A + Br$ for some $A,B \in \R$. Also, $\tilde u \in C^{2, \alpha}_\delta(P)$ by Proposition \ref{prop:appendix:elliptic-regularity} and hence,
	\begin{align*}
		\| u_{bb} \|_{C^{2, \alpha}_\delta (P)} 
		\le& \| u_{bb} - \tilde u \|_{C^{2, \alpha}_\delta (P)} + \| \tilde u \|_{C^{2, \alpha}_\delta (P)} \\
		\le & C \left[
			\| \Delta \tilde u \|_{C^{0, \alpha}_\delta(P')}
			+ \| u_{bb} - \tilde u \|_{C^{0}_\delta (P')}
			+ \| \tilde u \|_{C^{0}_\delta (P')} 
		\right] \\
		\le & C \left[
			2\| f_{bb} \|_{C^{0, \alpha}_\delta(P')}
			+ \| A + B r \|_{C^{0}_\delta (P')}
		\right].
	\end{align*}
	When $\delta < 0$, $A = 0$ and when $\delta < 1$, $B = 0$. In any other cases $\| A + B r \|_{C^{0}_\delta (P')}$ is bounded by $\| A + B r \|_{C^{0}_\delta (P' \setminus P)}$. Therefore, 
	\begin{align*}
		\| u_{bb} \|_{C^{2, \alpha}_\delta (P)} 
		\le & C \left[
			2\| f_{bb} \|_{C^{0, \alpha}_\delta(P')}
			+ \| A + B r \|_{C^{0}_\delta (P' \setminus P)}
		\right] \\
		\le & C \left[
			2\| f_{bb} \|_{C^{0, \alpha}_\delta(P')}
			+ \| u_{bb} - \tilde u \|_{C^{0}_\delta (P' \setminus P)}
		\right] \\
		% \le & C \left[
		% 	2\| f_{bb} \|_{C^{0, \alpha}_\delta(P')}
		% 	+ \| u_{bb} \|_{C^{0}_\delta (P' \setminus P)}
		% 	+ \| \tilde u \|_{C^{0}_\delta (P' \setminus P)}
		% \right] \\
		\le & C \left[
			3\| f_{bb} \|_{C^{0, \alpha}_\delta(P')}
			+ \| u_{bb} \|_{C^{0}_\delta (P' \setminus P)} 
		\right]. \qedhere
	\end{align*}
\end{proof}
Now that we know that $\Omega^{-2} \Delta^g$ is Fredholm, we can determine the kernel and co-kernel of $\Omega^{-2} \Delta^g$. Using the definition of $\widetilde{\Vol}$, one can see that the formal adjoint of $L_\delta$ is equal to $L_{1- \delta}$. Hence, using the maximum principle we can conclude
\begin{lemma}
	When $\delta < 0$, then
	\begin{align*}
		\Omega^{-2} \Delta^g \colon C^{2, \alpha}_\delta (M_{\R \times T^2, n}) \to C^{0, \alpha}_\delta(M_{\R \times T^2, n}), \\
		\Omega^{-2} \Delta^g \colon W^{2, 2}_\delta (M_{\R \times T^2, n}) \to L^{2}_\delta(M_{\R \times T^2, n})
	\end{align*}
	are injective.
	When $\delta > 1$, then
	\begin{align*}
		\Omega^{-2} \Delta^g \colon C^{2, \alpha}_\delta (M_{\R \times T^2, n}) \to C^{0, \alpha}_\delta(M_{\R \times T^2, n}), \\
		\Omega^{-2} \Delta^g \colon W^{2, 2}_\delta (M_{\R \times T^2, n}) \to L^{2}_\delta(M_{\R \times T^2, n})
	\end{align*}
	are surjective.
\end{lemma}
Like in \cite{Salm001}, there is no $\delta \in \R$ such that $\Omega^{-2} \Delta^g \colon C^{2, \alpha}_\delta (M_{\R \times T^2, n}) \to C^{0, \alpha}_\delta(M_{\R \times T^2, n})$ is an isomorphism. However, we know how the kernel and co-kernel behaves for small values of $\delta$ when we consider exponentially decaying weight functions. With this extra information we can make $\Omega^{-2} \Delta^g$ an isomorphism in this new norm.

\begin{theorem}
	\label{thm:appendix:isomorphism}
	Let $\delta \in (-1, 0)$, and let $\phi$ be a non-negative function on $\R^2 \times S^1$, such that 
	$\phi = r$ on $P$, then
	\begin{align*}
		\Omega^{-2} \Delta^g \colon C^{2, \alpha}_\delta (M_{\R \times T^2, n}) \oplus \R \phi \to C^{0, \alpha}_\delta(M_{\R \times T^2, n})
	\end{align*}
	is an isomorphism.
\end{theorem}
\begin{proof}
	Let $C^{k, \alpha}_{\delta, \exp}$ be the originally weighted spaces in \cite{Salm001}. We know that for $0< -\delta \ll 1$, 
	\begin{align*}
		\Omega^{-2} \Delta^g \colon C^{2, \alpha}_{\delta, \exp} (M_{\R \times T^2, n}) \oplus \R \phi \to C^{0, \alpha}_{\delta, \exp}(M_{\R \times T^2, n}),
	\end{align*}
	is an isomorphism. 
	Hence, 
	\begin{align*}
		\Omega^{-2} \Delta^g \colon C^{2, \alpha}_{\delta, \exp} (M_{\R \times T^2, n})  \to C^{0, \alpha}_{\delta, \exp}(M_{\R \times T^2, n}),
	\end{align*}
	is injective and has a 1-dimensional co-kernel.
	The formal adjoint of $\Omega^{-2} \Delta^g \colon C^{2, \alpha}_{\delta, \exp} (M_{\R \times T^2, n})  \to C^{0, \alpha}_{\delta, \exp}(M_{\R \times T^2, n})$ is $\Omega^{-2} \Delta^g \colon C^{2, \alpha}_{-\delta, \exp} (M_{\R \times T^2, n})  \to C^{0, \alpha}_{-\delta, \exp}(M_{\R \times T^2, n})$. Therefore, for $0 < \delta \ll 1$, the operator
	\begin{align*}
		\Omega^{-2} \Delta^g \colon C^{2, \alpha}_{\delta, \exp} (M_{\R \times T^2, n})  \to C^{0, \alpha}_{\delta, \exp}(M_{\R \times T^2, n}),
	\end{align*}
	is surjective and its kernel only consists of the constant functions.

	With this in mind, fix $\delta \in (-1, 0)$ and $0 < \delta' \ll 1$ and consider the formal adjoint of
	$$
	\Omega^{-2} \Delta^g \colon C^{2, \alpha}_\delta (M_{\R \times T^2, n}) \to C^{0, \alpha}_\delta(M_{\R \times T^2, n}).
	$$
	This formal adjoint is
	$$
	\Omega^{-2} \Delta^g \colon C^{2, \alpha}_{1- \delta} (M_{\R \times T^2, n}) \to C^{0, \alpha}_{1-\delta}(M_{\R \times T^2, n}).
	$$
	Hence, if $u$ lies in the kernel of this formal adjoint, then $u$ is polynomially growing. Therefore, $u \in C^{2, \alpha}_{\delta', \exp}(M_{\R \times T^2, n})$ and so $u$ is constant. We conclude that the operator $$
	\Omega^{-2} \Delta^g \colon C^{2, \alpha}_\delta (M_{\R \times T^2, n}) \to C^{0, \alpha}_\delta(M_{\R \times T^2, n})
	$$
	has no kernel and has a 1-dimensional co-kernel.

	Finally, we consider the extension
	$$
	\Omega^{-2} \Delta^g \colon C^{2, \alpha}_\delta (M_{\R \times T^2, n}) \oplus \R \phi \to C^{0, \alpha}_\delta(M_{\R \times T^2, n}).
	$$
	Either the dimension of the kernel increases by one, or the dimension of the co-kernel decreases by one.
	If the dimension of the kernel increases by one, then there exists a $u \in C^{2, \alpha}_\delta (M_{\R \times T^2, n})$ and $\lambda \in \R$ such that 
	$$
	\Delta (u + \lambda \phi) = 0.
	$$
	$u + \phi$ is a polynomially growing harmonic function, and so $u + \phi \in C^{2, \alpha}_{\delta', \exp}$. By the previous discussion, there exists a $c > 0$ such that
	$$
	u + \lambda \phi = c.
	$$
	Due to the growth rate of $u$ and $\phi$, this is a contradiction.
\end{proof}
\bibliography{sn-bibliography}
\end{document}